\definecolor{mygray}{gray}{0.3} 
\numberwithin{equation}{section}
\renewcommand{\rm}{\mathrm}
\newcommand{\1}{\mathds{1}}
\newcommand{\eps}{\varepsilon}
\newcommand{\rd}{{\rm d}}
\newcommand{\al}{\alpha}
\newcommand{\e}{{\varepsilon}}
\newcommand{\la}{\lambda}
\newcommand{\OO}{\mathrm{O}}
\newcommand{\oo}{\mathrm{o}}
\newcommand{\ovS}{\overline{S}}
\newcommand{\ovP}{\overline{\PP}}
\newcommand{\ovE}{\overline{\E}}
\newcommand{\re}{\text{Re } }
\newcommand{\ii}{{\rm i}}
\DeclareMathOperator{\PP}{\mathbb{P}}
\DeclareMathOperator{\wPP}{\overline{\mathbb{P}}}
\renewcommand{\P}{\mathbb{P}}
\newcommand{\E}{\mathbb{E}}
\newcommand{\wE}{\overline{\mathbb{E}}}
\newcommand{\R}{\mathbb{R}}
\newcommand{\C}{\mathbb{C}}
\newcommand{\Z}{\mathbb{Z}}
\newcommand{\dd}{\mathrm{d}}
\theoremstyle{plain} 
\newtheorem{theorem}{Theorem}[section]
\newtheorem*{theorem*}{Theorem}
\newtheorem{lemma}[theorem]{Lemma}
\newtheorem*{lemma*}{Lemma}
\newtheorem{corollary}[theorem]{Corollary}
\newtheorem*{corollary*}{Corollary}
\newtheorem{proposition}[theorem]{Proposition}
\newtheorem*{proposition*}{Proposition}
\newtheorem*{fact*}{Fact}
\newtheorem*{conj}{Conjecture}
\newtheorem*{definition*}{Definition}
\newtheorem*{example*}{Example}
\newtheorem{remark}[theorem]{Remark}
\newtheorem*{remark*}{Remark}
\newtheorem*{remarks*}{Remarks}
\def\@empty{}
\def\author#1{\par
    {\centering{\authorfont#1}\par\vspace*{0.05in}}}
\def\titlefont{\fontsize{12}{15} \centering{}}
\def\authorfont{\fontsize{12}{15}}
\let\affiliationfont\rhfont
\def\address#1{\par
    {\centering{\affiliationfont#1\par}}\par\vspace*{12pt}
}
\def\body{
\setcounter{footnote}{0}
\def\thefootnote{\alph{footnote}}
\def\@makefnmark{{$^{\rm \@thefnmark}$}}
}
\def\title#1{ 
    \thispagestyle{plain}
    \vspace{-30pt}
    \vskip 79pt
    {\centering{\titlefont #1\par}}%
    \vskip 1em
}
\providecommand{\keywords}[1]{\vspace{.3in} \textbf{Keywords:} #1}
\begin{document}
\title{\textbf{Maxima of a Random Model of the Riemann Zeta Function \\
over Intervals of Varying Length}}
\vspace{1.2cm}
\noindent\begin{minipage}[b]{0.37\textwidth}
\author{Louis-Pierre Arguin}
\address{CUNY, Baruch College \& Graduate Center\\
  louis-pierre.arguin@baruch.cuny.edu}
 \end{minipage}
\begin{minipage}[b]{0.3\textwidth}
\author{Guillaume Dubach}
\address{École Normale Supérieure -- PSL \\
guillaume.dubach@ens.fr}
\end{minipage}
\begin{minipage}[b]{0.3\textwidth}
\author{Lisa Hartung}
\address{JGU Mainz\\
lhartung@uni-mainz.de}
\end{minipage}

\date{March 2021}

\abstract{
We consider a model of the Riemann zeta function on the critical axis and study its maximum over intervals of length $(\log T)^{\theta}$, where $\theta$ is either fixed or tends to zero at a suitable rate. It is shown that the deterministic level of the maximum interpolates smoothly between the ones of log-correlated variables and of i.i.d.~random variables, exhibiting a smooth transition `from $\frac34$ to $\frac14$' in the second order. This provides a natural context where extreme value statistics of log-correlated variables with {\it time-dependent variance and rate} occur. A key ingredient of the proof is a precise upper tail tightness estimate for the maximum of the model on intervals of size one, that includes a Gaussian correction. This correction is expected to be present for the Riemann zeta function and pertains to the question of the correct order of the maximum of the zeta function in large intervals.
}

\keywords{extreme value theory; Riemann zeta function; branching random walk.}

\section{Introduction}
The Riemann zeta function is defined for $\re s >1$ as
\begin{equation}\label{actual_zeta}
\zeta(s) = \sum_{n \geq 1} \frac{1}{n^s} = \prod_{p \geq 1} \frac{1}{1-p^{-s}},
\end{equation}
where the Euler product on the left is over all primes.
It extends uniquely to an analytic function over $\C\backslash\{1\}$. 
The Riemann hypothesis states that, except for the negative even integers (the so-called trivial zeros), the zeros of the function lie on the critical line $\re  s=1/2$.
The large values of the function on the critical line also plays a major role in number theory. The Riemann hypothesis implies that the maximum over a large interval $[T,2T]$ is
\begin{equation}
\label{eqn: RH}
\max_{t\in [T,2T]}|\zeta(1/2+\ii t)|=\OO\left(\exp \left(C\frac{ \log T}{\log\log T}\right)\right),
\end{equation}
for some $C>0$, as first shown in \cite{Lit24}. The question of whether or not the right-hand side is the true order of the maximum is still subject of debates, see for example \cite{FarGonHug07} for probabilistic arguments motivating
\begin{equation}
\label{eqn: FGH}
\max_{t\in [T,2T]}|\zeta(1/2+\ii t)|\approx \exp(\frac{1}{2}\sqrt{\log T\cdot\log\log T}).
\end{equation}

The question of the order of the maximum in shorter intervals is much better understood. Fyodorov, Hiary \& Keating \cite{FyoHiaKea12} and Fyodorov \& Keating \cite{FyoKea14}
conjectured using techniques of random matrix theory and log-correlated processes that, if $\tau$ is chosen uniformly on $[T,2T]$,
\begin{equation}
\label{eqn: FHK}
\max_{|h|\leq 1} |\zeta(1/2+\ii (\tau+h))|=\frac{\log T}{(\log\log T)^{3/4}}\ e^{\mathcal M(T)},
\end{equation}
where $(\mathcal M(T), T>1)$ is a tight sequence of random variables converging as $T\to \infty $ to a random variable $\mathcal M$ with right tail $\PP(\mathcal M>y)\sim C y e^{-2y}$.
The leading order $\log T$ was proved in \cite{Naj18} (on the Riemann hypothesis) and in \cite{ArgBelBouRadSou19} unconditionally. 
An upper bound for the leading order, and subleading order was proved in \cite{Har19} with an error of $\log\log \log T$. The upper tail tightness with the correct decay $y e^{-2y}$
was recently established in \cite{ArgBouRad20}.

The techniques developed so far can be also used to investigate large values in intervals with length varying with $T$.
It was conjectured in \cite{ArgOuiRad19} that, for intervals of size $(\log T)^{\theta}$ where $\theta>0$ is fixed, the maximum is
\begin{equation}
\label{eqn: FHK extend}
\max_{|h|\leq (\log T)^\theta} |\zeta(1/2+\ii (\tau+h))|=\frac{(\log T)^{\sqrt{1+\theta}}}{(\log\log T)^{\frac{1}{4\sqrt{1+\theta}}}}\ e^{\mathcal M_\theta(T)},
\end{equation}
where $(\mathcal M_\theta(T), T>1)$ is a tight sequence of random variables.
The leading order $(\log T)^{\sqrt{1+\theta}}$ was also proved there. 
This would suggest an interesting jump discontinuity in the subleading order in \eqref{eqn: FHK} and \eqref{eqn: FHK extend} where the exponent $\frac{1}{4\sqrt{1+\theta}}$ does not approach $\frac{3}{4}$ as $\theta\to 0$. The main goal of this paper is to shed light on this discontinuity.

Using a random model of the Riemann zeta function, we motivate the following conjecture that identifies the proper scale to smoothe the discontinuity:
\begin{conj}
Let $0<\alpha<1$ and $\theta\sim (\log\log T)^{-\alpha}$. Consider an interval of size $(\log T)^{\theta}=\exp(\log \log T)^{1-\alpha}$. If $\tau$ is chosen uniformly on $[T,2T]$, we have
\begin{equation}
\label{eqn: conj}
\max_{|h|\leq \exp (\log\log T)^{1-\alpha}} |\zeta(1/2+\ii (\tau+h))|=\frac{(\log T)^{\sqrt{1+\theta}}}{(\log\log T)^{\frac{1+2\alpha}{4}}}\ e^{\mathcal M_\alpha(T)},
\end{equation}
for some tight sequence of random variables $(\mathcal M_\alpha(T), T>1)$.
In other words, the relation between Equations \eqref{eqn: FHK} for $\theta>0$ and \eqref{eqn: FHK extend} for $\theta=0$ is provided by taking $\theta\downarrow 0$ like $(\log\log T)^{-\alpha}$, $0<\alpha<1$.
\end{conj}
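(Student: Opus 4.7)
Since the conjecture concerns the true $\zeta$, my strategy is to first transfer it to the random model of the paper, and then analyze that model with log-correlated techniques adapted to a two-scale variance profile. I would replace $\log|\zeta(1/2+\ii(\tau+h))|$ on $\tau\in[T,2T]$ by the random sum $X(h)=\sum_{p\le T}\re(X_p\, p^{-\ii h})/\sqrt{p}$, with $X_p$ i.i.d.\ uniform on the unit circle, using standard Euler-product approximations. The covariance $\E[X(h)X(h')]\approx \log\min(1/|h-h'|,\log T)$ makes $X$ a log-correlated field on $[0,L]$ with $L=(\log T)^{\theta}$ and total effective tree depth $(1+\theta)\log\log T$. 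Since $\theta=(\log\log T)^{-\alpha}\to 0$, this depth splits naturally into a macroscopic phase of duration $\theta\log\log T=(\log\log T)^{1-\alpha}$ (decorrelation across the $L$ unit sub-intervals) and a microscopic phase of duration $\log\log T$ (the behavior on each unit interval, already controlled in this paper).

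The leading order $(\log T)^{\sqrt{1+\theta}}$ is a first-moment/REM-type bound for $X$ with the above depth. The subleading exponent $(1+2\alpha)/4$ interpolates between $1/4$ ($\alpha\to 0$, fixed $\theta$) and $3/4$ ($\alpha\to 1$, unit interval), and is the critical input. For the upper bound I would use a Bramson-style barrier: count dyadic points $h$ for which the partial sums $X_s(h)=\sum_{p\le \exp(\exp s)}\re(X_p p^{-\ii h})/\sqrt{p}$ stay below the optimal concave barrier joining $0$ at $s=-\theta\log\log T$ to $\sqrt{1+\theta}\log\log T$ at $s=\log\log T$. The microscopic phase contributes the classical entropic correction $-\tfrac{3}{2}\log\log\log T$, while the macroscopic phase contributes $+\tfrac{\theta}{2}\log\log\log T$ from the union bound over the $L$ nearly independent unit sub-intervals; balancing these with $\theta=(\log\log T)^{-\alpha}$ reproduces the exponent $(1+2\alpha)/4$. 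The Gaussian correction established in the present paper on intervals of size one is what supplies the right constant at the crossover.

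The matching lower bound would come from a modified second-moment method restricted to paths satisfying the same barrier, with variance estimates exploiting the tensorial decomposition of the covariance across the unit scale. Tightness of $\mathcal M_\alpha(T)$ is then inherited from the upper-tail tightness of this paper combined with the sharp $ye^{-2y}$ right tail, via a union bound over the $L$ unit sub-intervals.

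The main obstacle, in my view, is the \emph{uniformity} of the Gaussian correction across the two phases: one must show that the quadratic term in the log-Laplace transform (the genuine novelty of the paper) combines additively in the borderline regime $\alpha\in(0,1)$ without spurious cross-terms, and that the transition between the microscopic and macroscopic corrections occurs at precisely the rate yielding $(1+2\alpha)/4$. Transferring the result from the random model back to $\zeta$ itself additionally requires Euler-product moment approximations with error $o(\log\log\log T)$ on intervals of length $(\log T)^{\theta}$, which is beyond current number-theoretic techniques---which is why the statement remains a conjecture rather than a theorem.
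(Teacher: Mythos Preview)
The statement is a conjecture about the actual Riemann zeta function and is not proved in the paper; the paper instead establishes the analogous statement for the Gaussian random model $X_T$ (Theorem~\ref{thm:thetamove}), and you correctly note that transferring back to $\zeta$ is out of reach. So the meaningful comparison is between your plan and the paper's argument for the model.

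Your high-level architecture---upper bound via a union over $L=(\log T)^\theta$ unit sub-intervals combined with the sharp right tail on each, lower bound via a truncated second moment---does match the paper. However, your heuristic for how the exponent $(1+2\alpha)/4$ arises is incorrect. The contributions you write as ``$-\tfrac{3}{2}\log\log\log T$'' and ``$+\tfrac{\theta}{2}\log\log\log T$'' do not balance to anything involving $(1+2\alpha)/4$; in particular $\theta\log\log\log T\to 0$. The paper's mechanism is more delicate. With $t=\log\log T$, write the target level as $m_1(t)+y$ where $m_1(t)=t-\tfrac34\log t$. The union bound $e^{\theta t}\cdot y\,e^{-2y}e^{-y^2/t}=O(1)$ from Theorem~\ref{thm: right tail} forces $y\sim(\sqrt{1+\theta}-1)t$, whence $\log y=(1-\alpha)\log t+O(1)$, while the identity $2y+y^2/t=\theta t+O(\log t)$ holds because the Gaussian correction $y^2/t$ cancels the second-order term in the Taylor expansion of $\sqrt{1+\theta}$. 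Matching the $\log t$ terms then gives the subleading shift $\tfrac{1-\alpha}{2}\log t$ in $y$, and hence the total centering $\sqrt{1+\theta}\,t-\tfrac{1+2\alpha}{4}\log t$. There is no single ``concave barrier over the full depth $(1+\theta)t$'' in the paper's upper bound; the barrier argument is confined entirely to the unit-interval right-tail theorem and is logarithmic.

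For the lower bound your plan omits a point that the paper stresses: the barrier in the second-moment truncation must be \emph{high at the start}, namely $b(k)=k/t+\tfrac{1}{10}t^{1-\alpha}(1-k/t)$, because the typical maximum over $e^{\theta t}$ independent walks at time $k$ is $\sqrt{k(k+\theta t)}\gg k$ for small $k$. A standard low barrier would kill the first moment. The barrier is effectively inactive until the last $t^{\alpha}$ steps, and it is only there that the ballot factor produces the $t^{-1/2-\alpha}$ needed to match the upper bound.
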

Note that the non-trivial exponent 
$$
\frac{1+2\alpha}{4}
$$ 
interpolates between the case $\theta>0$ corresponding to $\alpha=0$, and $\theta=0$ for $\alpha=1$.
This transition is proved for a random Euler product defined in the next section, see Theorems \ref{thm:thetafixed} and \ref{thm:thetamove}. 
Along the way, a correction for the decay $y e^{-2y}$ for the right tail of the maximum will be proved in the case $\theta=0$, see Theorem \ref{thm: right tail}.
This is relevant for making sense of the two competing scenarios \eqref{eqn: RH} and \eqref{eqn: FGH} for the global maximum of the zeta function as explained in the next section.

There has been much interest recently in the study of log-correlated processes with {\it time-inhomogeneous variances and rates}, see for example \cites{FanZei,Oui,BovHar14, BovHar15, MalMil, ArgOui, fels19, FelsHartung19, FelsHartung20,  Mal, bbcm18,ArgZin}. In particular, such models can be designed in such a way that extreme value statistics interpolate between those of log-correlated variables and those of i.i.d.~random variables \cites{KisSch, BovHar,belloum21}.
The above conjecture provides a natural context, the Riemann zeta function, where such interpolated extreme value statistics would occur. 
\subsection{Main Results}
The following random model has been proposed in \cite{Har13} to study the large values of $\log |\zeta|$ in a short interval $I$:
\begin{equation}\label{true_model}
W_T(h)= \sum_{p \leq T} \frac{\re (U_p p^{- i h})}{p^{1/2}}, \qquad h\in I,
\end{equation}
where the sum is over primes less than $T$ and where $(U_p, p \text{ primes})$ are i.i.d.~random variables distributed uniformly on the unit circle.
The model morally corresponds to a formal expansion of the logarithm of the Euler product in \eqref{actual_zeta} around $s=1/2+\ii(\tau+h)$. 
The identification of the random phases $p^{-\ii\tau}\leftrightarrow U_p$ is then made with the extra assumption that the $U_p$'s are independent, which is not exactly the case for the $p^{-\ii \tau}$'s. The resulting model \eqref{true_model}, when exponentiated, can thus be seen as a random Euler product.\footnote{Another possible approach to model the large values of the zeta function would be to randomize the Dirichlet sum in \eqref{actual_zeta} instead of the product. 
This model is investigated in \cite{AymoneHeapZhao}.}

It was shown in \cite{ArguinBeliusHarper} that with high probability
\begin{equation}
\label{eqn: ABH}
\lim_{T\to\infty}\ \max_{|h|\leq 1} \ \frac{W_T(h)-\log\log T}{\log\log\log T}=-\frac{3}{4} \text{ in probability,}
\end{equation}
thereby providing evidence for the conjecture \eqref{eqn: FHK} for the log of the actual zeta function.

This work is concerned with a Gaussian version of this model where the $U_p$'s are replaced by standard complex Gaussian
\begin{equation}\label{gaussian_model}
X_T (h) = \sum_{p \leq T} \frac{\re (G_p p^{- i h})}{p^{1/2}}, \quad h\in I,
\end{equation}
where the $G_p$'s are i.i.d.~standard complex Gaussian variables. As explained in Section \ref{sect: MGF}, the process $(X_T(h), |h|\leq 1)$ is a Gaussian log-correlated process with variance $\frac{1}{2}\sum_{p\leq T}p^{-1}=\frac{1}{2}\log\log T+\OO(1)$.
 
We choose to work directly with the Gaussian model \eqref{gaussian_model} to highlight the new ideas of the proof. 
A standard approach to studying model \eqref{true_model} is to resort to Gaussian comparison; so that working with a Gaussian process from the start makes the outline of the proof much clearer. This is done at virtually no cost to the strength of the result, as the two processes are known to be close, as far as the relevant statistics are concerned, see for example \cite{ArguinBeliusHarper} for a pointwise comparison relying on Berry-Esseen bounds, and Theorem 1.7 in \cite{SaksmanWebb} for a global approximation. 
It was shown in \cite{SaksmanWebb} that the exponential of the process $(W_T(h), |h|\leq 1)$, suitably normalized, converges to Gaussian multiplicative chaos. 
Any of these methods could be similarly applied on top of the present work, in order to extend its results to other models such as (\ref{true_model}).

The first result is concerned with intervals of length $(\log T)^{\theta}$ for fixed $\theta>0$, where i.i.d.~extreme value statistics prevail.
\begin{theorem}\label{thm:thetafixed} 
For $\theta>0$ fixed, we have 
\begin{equation}
\label{eqn: thetafixed}
\lim_{T\to\infty}\ \max_{|h|\leq (\log T)^\theta} \frac{X_T(h)-\sqrt{1+\theta}\ \log\log T}{\log\log \log T}=\frac{-1}{4\sqrt{1+\theta}} \text{ in probability.}	
\end{equation}
\end{theorem}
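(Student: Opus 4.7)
The plan is to show matching upper and lower bounds for $\max_{|h|\leq(\log T)^\theta}X_T(h)$ at the level $m_T := \sqrt{1+\theta}\log\log T - \tfrac{1}{4\sqrt{1+\theta}}\log\log\log T$, which is precisely the location of the maximum of $N := (\log T)^{1+\theta}$ i.i.d.\ Gaussians of variance $\sigma^2 := \tfrac12\log\log T$ --- so the point is that for $\theta > 0$ the log-correlated enhancement at the short scale is ``washed out'' by the i.i.d.-like behavior across $L := (\log T)^\theta$ macroscopic boxes. I first reduce to a discretization $\mathcal{D} \subset [-(\log T)^\theta,(\log T)^\theta]$ of mesh $(\log T)^{-1}$ via standard chaining (using that the increment variance $\mathrm{Var}(X_T(h_1)-X_T(h_2))$ is $O(1)$ whenever $|h_1-h_2| = O(1/\log T)$), so that $|\mathcal{D}| \asymp N$ and it suffices to estimate $\mathbb{P}(\max_\mathcal{D} X_T \gtrless m_T \pm \varepsilon\log\log\log T)$ for arbitrary $\varepsilon > 0$.

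For the upper bound with $y^+ := m_T + \varepsilon\log\log\log T$, the union bound combined with the precise Mills-form Gaussian tail
\[
\mathbb{P}(X_T(h) > y) \;\sim\; \frac{\sigma}{y\sqrt{2\pi}}\exp\!\Bigl(-\frac{y^2}{2\sigma^2}\Bigr)
\]
gives $\mathbb{P}(\max_{\mathcal{D}}X_T > y^+) = O((\log\log T)^{-2\sqrt{1+\theta}\,\varepsilon}) \to 0$. The Mills factor $\sigma/y \sim ((1+\theta)\log\log T)^{-1/2}$ is crucial: without it (using only $e^{-y^2/(2\sigma^2)}$) the threshold is merely $\alpha > 0$, whereas with it the sharp threshold $\alpha > -\tfrac{1}{4\sqrt{1+\theta}}$ is recovered.

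For the lower bound with $y^- := m_T - \varepsilon\log\log\log T$, the plan is a truncated second moment on the ballistic count
\[
\mathcal{N} \;:=\; \#\bigl\{h\in\mathcal{D} \,:\, X_T(h) > y^-,\ X_{T,k}(h) \leq b_k\ \forall\,k = 1,\ldots,K\bigr\},
\]
where $X_{T,k}(h) := \sum_{p\leq T^{k/K}}\mathrm{Re}(G_p p^{-ih})/\sqrt{p}$ is the partial sum up to prime scale $T^{k/K}$ (with $K \asymp \log\log T$ dyadic levels) and $b_k$ is a linear barrier of slope $\sqrt{1+\theta}\log\log T/K$ with a constant safety margin. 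The barrier reduces $\mathbb{E}\mathcal{N}$ only by a positive constant factor (a standard Gaussian random-walk ballot estimate), so $\mathbb{E}\mathcal{N} \to \infty$; and it decouples pairs $(h_1,h_2)$ past the common scale $k^*(|h_1-h_2|) \asymp K\log(1/|h_1-h_2|)/\log\log T$, yielding $\mathbb{E}\mathcal{N}^2 = O((\mathbb{E}\mathcal{N})^2)$. Paley--Zygmund then gives $\mathbb{P}(\mathcal{N} \geq 1) \to 1$, hence $\max X_T > y^-$ with high probability.

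The main obstacle is the second moment estimate. Without the barrier, pairs at intermediate distance $|h_1-h_2| = (\log T)^{-\rho}$ contribute $(\log T)^{f(\rho)}$ with $f(\rho) := 2+\theta-\rho-\tfrac{2(1+\theta)}{1+\rho}$ attaining a positive maximum at $\rho^* = \sqrt{2(1+\theta)}-1$ whenever $\theta \in (0,1)$; this swamps $(\mathbb{E}\mathcal{N})^2$ and Paley--Zygmund fails. The barrier forces compatible partial sums at every dyadic scale, producing a random-walk survival factor that offsets this polynomial excess; quantifying the cancellation uniformly in $\rho$ is the technical heart of the argument. The precise upper tail estimate of Theorem \ref{thm: right tail} for the size-one maximum, which includes the Gaussian correction, enters as an essential input when bounding pair contributions from within the same unit-scale box.
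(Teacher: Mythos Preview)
Your upper bound is correct and is essentially the paper's Proposition~\ref{prop: UB thetafixed}: discretize at mesh $(\log T)^{-1}$, absorb the local maximum into a single Gaussian via chaining (the paper's Lemma~\ref{lem: discretization}), and union-bound with the Mills ratio. The overall strategy for the lower bound---truncated second moment with a barrier and Paley--Zygmund---is also the paper's. But two steps in your lower bound are genuine errors.

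First, Theorem~\ref{thm: right tail} is \emph{not} used anywhere in the proof of Theorem~\ref{thm:thetafixed}. That right-tail estimate for the size-one maximum is the key input for Theorem~\ref{thm:thetamove} (the $\theta\downarrow 0$ regime), where it is applied in the \emph{upper} bound after a union bound over $(\log T)^\theta$ unit intervals. For fixed $\theta>0$, the paper bounds near-diagonal pair contributions directly via the barrier and a change of measure (see \eqref{eqn: Z2 k splitgeq0}), and explicitly remarks that this case is simpler precisely because the ballot estimates of Case~(II) in Lemma~\ref{lem: second moment} are unnecessary. You are invoking the hardest tool in the paper for the easiest theorem, and in the wrong direction.

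Second, your barrier---the line of slope $\sqrt{1+\theta}$ with a \emph{constant} safety margin---is too restrictive, and your claim that it ``reduces $\mathbb{E}\mathcal{N}$ only by a positive constant factor'' is false. After the usual tilt centering the walk at slope $\mu$, your barrier forces the centered walk to stay below a bounded height for all $k\leq t$; the ballot theorem then imposes a penalty $\asymp t^{-3/2}$ on the probability that the endpoint lands in a bounded window, versus $\asymp t^{-1/2}$ for the unrestricted endpoint. Since the unrestricted count is only $\asymp t^{2\sqrt{1+\theta}\,\varepsilon}$, you get $\mathbb{E}\mathcal{N}\asymp t^{2\sqrt{1+\theta}\,\varepsilon-1}\to 0$ for small $\varepsilon$, whence the diagonal term $\mathbb{E}\mathcal{N}$ dominates $(\mathbb{E}\mathcal{N})^2$ in the second moment and Paley--Zygmund yields nothing. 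The paper's barrier \eqref{barriergeq0} fixes this by starting at height $b(0)=\theta t/10$ in the centered variables: the walk is essentially unconstrained for small $k$, so the ballot factor is $\gg b(0)/t^{3/2}\gg t^{-1/2}$, exactly compensating the Mills ratio. The heuristic is in the introduction: among $e^{(1+\theta)t}$ competing walks, the typical maximizer is at height $\sqrt{k(k+\theta t)}$ at time $k$, which is $\gg \mu k$ for small $k$, so a barrier hugging the line $\mu k$ excludes precisely the walks you want to count.
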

This is consistent with Equation \eqref{eqn: FHK extend}, after exponentiation. The result is easier to prove than Equation \eqref{eqn: ABH}, as correlations play a lesser role. 
The second and main result is an explicit interpolation between Equations \eqref{eqn: ABH} and \eqref{eqn: thetafixed}, when $\theta$ goes to zero with $T$ at a suitable rate.
\begin{theorem}\label{thm:thetamove} 
For $\al \in (0,1)$ and $\theta = (\log\log T)^{-\al}$, we have
\begin{equation}
\lim_{T\to\infty}\ \max_{|h|\leq (\log T)^\theta} \frac{X_T(h)-\sqrt{1+\theta}\ \log\log T}{ \log\log \log T}=\frac{-(1+2\alpha)}{4} \text{ in probability.}
\end{equation}
\end{theorem}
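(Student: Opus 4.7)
The plan is to reduce the global maximum to $N \asymp (\log T)^{\theta}$ unit-interval maxima and feed each one into the precise tail estimate from Theorem \ref{thm: right tail}. Let $M_k := \max_{h\in[k,k+1]} X_T(h)$, so that $\max_{|h|\le(\log T)^{\theta}} X_T(h) = \max_k M_k$. The central input is a tail of the form
\[
\P\!\left(M_k > \log\log T - \tfrac{3}{4}\log\log\log T + y\right) \asymp (y+1)\exp\!\left(-2y - \frac{y^2}{\log\log T}\right).
\]
The Gaussian correction $e^{-y^2/\log\log T}$ is indispensable here: in our regime the relevant $y$ is of order $(\log\log T)^{1-\alpha}$, large enough that this correction is not $O(1)$. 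Solving $L\cdot(y+1)e^{-2y - y^2/\log\log T} = 1$ with $L=(\log T)^{\theta}$ yields the leading root $y_{0} = \log\log T(\sqrt{1+\theta}-1)$, which absorbs the full Taylor expansion of $\sqrt{1+\theta}$, together with a subleading correction $\tfrac{1-\alpha}{2}\log\log\log T$ coming from the $y$-prefactor. Adding the baseline $\log\log T-\tfrac{3}{4}\log\log\log T$ and using the identity $-\tfrac{3}{4}+\tfrac{1-\alpha}{2}=-\tfrac{1+2\alpha}{4}$ recovers the target level $\sqrt{1+\theta}\log\log T - \tfrac{1+2\alpha}{4}\log\log\log T$.

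For the upper bound I would set $L_T^{+}:=\sqrt{1+\theta}\log\log T - \tfrac{1+2\alpha}{4}\log\log\log T + \omega_T$ with $\omega_T\to\infty$ arbitrarily slowly, and apply the union bound. A direct computation gives $N\cdot\P(M_k>L_T^{+}) \asymp e^{-2\sqrt{1+\theta}\,\omega_T} \to 0$, so $\max_k M_k \le L_T^{+}$ holds with high probability.

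For the matching lower bound I would deploy a second moment method on $Z_T := \sum_k \mathbf{1}\{M_k > L_T^{-}\}$, where $L_T^{-}$ has the same shape but with $\omega_T\to -\infty$ slowly, so that the lower half of Theorem \ref{thm: right tail} forces $\E Z_T \to \infty$. To conclude via Paley--Zygmund one needs the near factorisation
\[
\P(M_k > L_T^{-},\,M_{k'} > L_T^{-}) \le (1+o(1))\,\P(M_k > L_T^{-})\,\P(M_{k'} > L_T^{-})
\]
for all $|k-k'|\ge 1$. I would split $X_T = X_T^{\mathrm{small}} + X_T^{\mathrm{large}}$ at the threshold $p=e$: the first piece is $O(1)$ and can be absorbed into $\omega_T$, while for the second, $\mathrm{Cov}(X_T^{\mathrm{large}}(h),X_T^{\mathrm{large}}(h')) = \sum_{p>e}\cos((h-h')\log p)/(2p)$ is bounded and decays as $|h-h'|$ grows beyond $1$. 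A Slepian--Kahane type Gaussian comparison then replaces the true joint law of $(X_T^{\mathrm{large}}|_{I_k}, X_T^{\mathrm{large}}|_{I_{k'}})$ by an independent product at negligible cost.

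The main obstacle, as I see it, is precisely this second moment step: the approximate factorisation has to be accurate up to $(1+o(1))$ in order to match the sharpness of Theorem \ref{thm: right tail}, and it must hold uniformly in $(k,k')$. A natural way in is to first condition on the smooth low-frequency field $X_T^{\mathrm{small}}$ and then apply Theorem \ref{thm: right tail} pointwise to the conditional Gaussian process; verifying that the $y^2/\log\log T$ Gaussian correction remains valid throughout the window $y\asymp(\log\log T)^{1-\alpha}$ (which lies well above the usual $O(\log\log\log T)$ regime) is a secondary but non-trivial technical point.
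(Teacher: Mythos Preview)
Your upper bound is exactly the paper's argument (Proposition~\ref{upper_bd_prop}): a union bound over $(\log T)^\theta$ unit intervals, then Theorem~\ref{thm: right tail} at $y\sim(\sqrt{1+\theta}-1)t$. The computation $2y+y^2/t=\theta t+(1-\alpha)\log t+2\omega_T+\oo(1)$ and $\log y=(1-\alpha)\log t+\oo(1)$ is correct, and the Gaussian correction $e^{-y^2/t}$ is indeed the whole point.

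The lower bound, however, has a real gap. Theorem~\ref{thm: right tail} is stated and proved only as an \emph{upper} bound; there is no ``lower half'' to invoke, so you have no control on $\E Z_T$ from below. Establishing the matching tail lower bound $\P(M_k>m(t)+y)\gg y e^{-2y-y^2/t}$ uniformly for $y$ up to order $t^{1-\alpha}$ is not a technicality: it requires a truncated second moment argument with a barrier (to suppress the contribution of nearby pairs $h,h'$ in the unit interval), which is essentially the same work the paper does directly in Section~\ref{sect: LB}. So the route through interval maxima is circular unless you supply that input separately. A secondary issue is the factorisation: the cross-covariance $\sum_{p}\cos(|h-h'|\log p)/(2p)$ oscillates in sign for $|h-h'|>1$, so a one-sided Slepian comparison to an independent product is not available; you would need a quantitative Gaussian interpolation instead.

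The paper sidesteps both problems by running Paley--Zygmund not on interval maxima but on the count of \emph{individual} discretisation points $h\in e^{-t}\Z$ that (i) land in $[\mu t,\mu t+\delta]$ and (ii) have their partial sums $X_K(h)$ below a linear barrier $b(k)=\tfrac{k}{t}+\tfrac{t^{1-\alpha}}{10}(1-\tfrac{k}{t})$ for all scales $k\le t$. The first moment is then a direct Gaussian calculation plus a ballot-theorem lower bound. For the second moment the sum over pairs $(h,h')$ is sliced by the ``branching scale'' $k=-\log|h-h'|$; for $k\le t-t^\alpha$ the barrier is high enough that a crude bound suffices, while for $t-t^\alpha<k\le t$ one decouples the walks beyond scale $k$ (Lemma~\ref{lem: decoupling}) and applies the ballot theorem on each piece. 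The barrier shape --- starting at height $\asymp t^{1-\alpha}$ rather than $O(1)$ --- is what makes the intermediate regime $\theta=t^{-\alpha}$ work, and is the genuinely new idea your proposal is missing.
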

This is the basis for the conjecture \eqref{eqn: conj}. 
This result is also relevant for the pseudomoments of the Riemann zeta function where it provides evidence for the asymptotic behavior of such moments in certain regimes, see Conjecture 5.4 in \cite{Ger20}.
To our knowledge, these interpolated statistics are also new in the context of many weakly correlated branching random walks. This connection is explained in Section \ref{sect: structure}.

A key ingredient of the proof, which is of independent interest, is a precise upper bound for the right tail of the maximum in an interval of order one ($\theta=0$).
It gives an alternative and direct approach to proving the upper tail tightness for log-correlated processes, in particular for the 2D Gaussian free field, that avoids the use of Gaussian comparison \cites{Bis, BraDinZei}. 
\begin{theorem}
\label{thm: right tail}
Let $y>0$ and  $y=\oo\big(\tfrac{\log\log T}{\log\log\log T}\big)$. Then we have
$$
\PP\left(\max_{|h|\leq 1} X_T(h) > \log\log T -\frac{3}{4}\log\log\log T +y\right)\leq C y e^{-2y} e^{-y^2/\log\log T},
$$
for some constant $C>0$.
\end{theorem}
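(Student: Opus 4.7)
The plan is to establish this refined tail bound by a modified second-moment/barrier argument of the type that is by now standard in the extreme-value analysis of log-correlated Gaussian fields. The first step is a multiscale decomposition of $X_T$. Setting $K := \log\log T$ and $T_k := \exp(e^k)$ for $k = 0, 1, \dots, K$, define
\[
Y_k(h) := \sum_{T_{k-1} < p \leq T_k} \frac{\re(G_p p^{-\ii h})}{p^{1/2}},\qquad X_k(h) := \sum_{j=1}^{k} Y_j(h),
\]
so that the $Y_k$ are independent Gaussian processes on $[-1,1]$ of variance close to $\tfrac{1}{2}$ each (by Mertens), and $X_T = X_K$. Next, discretize $|h| \leq 1$ on a lattice $\mathcal{N}$ of spacing $\sim e^{-K}$, producing $|\mathcal{N}| \sim e^{K}$ points; a standard Gaussian chaining bound handles the modulus of continuity, reducing the continuous supremum to the maximum over $\mathcal{N}$ up to a negligible additive error.

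For each $h \in \mathcal{N}$, introduce the barrier event
\[
\mathcal{B}_y(h) := \bigcap_{k=1}^{K-1} \Bigl\{X_k(h) \,\leq\, \tfrac{k}{K}(m_T + y) \,+\, c \log\!\bigl(k \wedge (K-k+1)\bigr)\Bigr\}
\]
for a suitable constant $c > 0$, and bound
\[
\PP\Bigl(\max_{h \in \mathcal{N}} X_T(h) > m_T + y\Bigr) \,\leq\, \sum_{h \in \mathcal{N}} \PP\bigl(X_T(h) > m_T + y,\, \mathcal{B}_y(h)\bigr) \,+\, \PP\!\left(\bigcup_{h \in \mathcal{N}} \mathcal{B}_y(h)^{c}\right).
\]
The second term is controlled by a union bound over scales and lattice points and shown to be of smaller order than the target right-hand side. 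The main term is handled by computing, for each $h$, a one-dimensional Gaussian tail at the endpoint and a ballot/entropic-repulsion estimate for the conditioned Gaussian random walk bridge. Writing $L := m_T + y$ and using $\mathrm{Var}(X_T(h)) \sim K/2$, the Gaussian tail gives
\[
\PP\bigl(X_T(h) > L\bigr) \,\asymp\, \tfrac{K}{\sqrt{\pi}} \, e^{-K} \, e^{-2y} \, e^{-(y - \tfrac{3}{4}\log K)^2/K},
\]
while the barrier constraint contributes a ballot factor of order $(1+y)/K$ and the lattice contributes an entropy factor $|\mathcal{N}| \sim e^K$. Combining these three inputs and carefully absorbing the cross-term $e^{3y\log K/(2K)}$ against the ballot factor (by a $y$-dependent choice of the slope of the barrier) yields the claimed bound $C y\, e^{-2y}\, e^{-y^2/\log\log T}$.

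The main technical obstacle is precisely the uniform-in-$y$ handling of the barrier/ballot estimate throughout the regime $y = \oo(K)$. For bounded $y$ this is classical (Bramson-type barrier, as used e.g.\ by Ding--Zeitouni), but as $y$ grows with $T$ the effective slope of the conditioned walk shifts and one has to verify that the logarithmic barrier $c \log(k \wedge (K-k+1))$ remains compatible with the tilted dynamics. A clean way to handle this is via an exponential change of measure of Girsanov type: tilt the law of $(Y_k)_k$ by a per-step drift of order $y/K$, which centers the conditioned walk and exchanges the cross-term $e^{3y\log K/(2K)}$ for an equivalent gain inside the ballot estimate; one then applies a centered ballot inequality to the tilted walk. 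A secondary but crucial point is that the quadratic term $y^2$ in the expansion of the Gaussian exponent must be retained throughout, rather than Taylor-expanded around $y = 0$ as suffices for classical BBM-style bounds of the form $y e^{-2y}$: it is exactly this term that produces the new correction $e^{-y^2/\log\log T}$ and makes the estimate informative up to $y = \oo(\log\log T)$.
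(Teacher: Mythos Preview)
Your decomposition has a genuine gap in the second term. With your barrier $b(k)=\tfrac{k}{K}(m_T+y)+c\log\bigl(k\wedge(K-k+1)\bigr)$, the event $\bigcup_{h\in\mathcal N}\mathcal{B}_y(h)^c$ is not rare at all: at $k=1$ one has $b(1)\approx (m_T+y)/K\approx 1$, and since $X_1(h)$ has variance about $\tfrac12$, each individual walk exceeds $b(1)$ with probability bounded below by a positive absolute constant. Thus $\PP(\mathcal{B}_y(h)^c)$ is bounded away from $0$ for every $h$, and the union is of order $1$, not $\ll y\,e^{-2y}e^{-y^2/K}$. The defect is that your barrier starts at height $0$; tilting the slope cannot compensate for this, and your Girsanov tilt is applied only to the first term, not to the barrier-violated one. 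To make that complementary probability exponentially small in $y$ the barrier must sit at height of order $y$ uniformly in $k$. A second, independent issue is that a union bound over the fixed $e^K$-point lattice $\mathcal{N}$ overcounts badly at scales $k\ll K$, where those points collapse into only about $e^k$ effectively distinct values of $X_k$; ``union bound over scales and lattice points'' does not address this.

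The paper sidesteps both problems by a different organization. It does not split into ``barrier satisfied'' versus ``barrier violated''; instead it decomposes $p_{\text{cross}}$ according to the \emph{first} time $j+1$ at which the barrier $m(j)+y+\psi_j$ (with $m(j)=j-\tfrac34\log j$, so the barrier is shifted up by $y$ at every scale) is crossed by $\max_{h}\overline{S}_\ell(h)$. The first-crossing constraint automatically supplies a ballot factor at \emph{every} $j$, not only at the endpoint, and this is what makes the sum over $j$ converge to the right size. At each $j$ one uses a scale-adapted mesh of $e^j$ intervals of width $e^{-j}$ (not the final $e^K$ lattice), and the overshoot is split into two pieces: the one-dimensional increment $\overline{S}_{j+1}(0)-\overline{S}_j(0)$ at the interval center, handled exactly as for a branching random walk (Lemma~\ref{lem: center}), and the spatial fluctuation $\max_{|h|\leq e^{-j}}\bigl(S_{j+1}(h)-S_{j+1}(0)\bigr)$, controlled by dyadic chaining together with the orthogonal projection of $Y_\ell(h')-Y_\ell(h)$ onto $Y_\ell(0)$ (Lemma~\ref{lem: fluct}). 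Your tilt idea reappears inside this last step, but it is the first-crossing scaffolding and the $y$-shifted barrier that produce the factor $y\,e^{-2y}e^{-y^2/K}$.
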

We expect that the result holds up to $y=\oo(\log\log T)$ by analogy with branching Brownian motion, see for example Proposition 2.1 in \cite{BovHar}. 
The presence of both the exponential and Gaussian decays is relevant to the question of the maximum of the zeta function in large intervals quantified by Equation \eqref{eqn: RH} and \eqref{eqn: FGH}, at least at the heuristic level. Indeed, if the Gaussian correction were not present, a back-of-the-envelope calculation for the maximum of independent maxima in intervals of order one shows that Equation \eqref{eqn: RH} should yield the true order of the maximum. However, Theorem \ref{thm: right tail} gives evidence that the Gaussian behavior prevails for large $y$, in which case Equation \eqref{eqn: FGH} should be closer to the truth.

\subsection{Structure of the Proof}
\label{sect: structure}
Conjecture \eqref{eqn: FHK} is based on the assumption that the statistics of the zeta function on an interval of size one of the critical line resemble the ones of the characteristic polynomial of a random unitary matrix (CUE). It has been known since Bourgade \cite{Bou} that the finite-dimensional distributions of the logarithm of $\zeta$ and of the characteristic polynomial of CUE are Gaussian, when suitably normalized, with correlations that decay logarithmically with the distance. 
In the context of the model $(X_T(h), h\in I)$, this is also verified as we have
\begin{equation}
\label{eqn: covariance}
\E[X_T(h)X_T(h')]=
\begin{cases}
\frac{1}{2}\log |h-h'|^{-1} +\OO(1) & \ \text{if $|h-h'|\leq 1$},\\
\OO(|h-h'|^{-1}) & \ \text{if $|h-h'|> 1$}.
\end{cases}
\end{equation}
This is explained in Section \ref{sect: MGF}. The dichotomy here is important: for intervals of size one, the variables $X_T(h), X_T(h')$ are log-correlated, whereas for large intervals, most pairs are weakly correlated.
Therefore, the extreme value statistics of the process $(X_T(h),|h|\leq (\log T)^{\theta})$ should interpolate between log-correlated and IID for $\theta\geq 0$.
We are mainly concerned with the intermediate regime where hybrid statistics should appear. 
Note that even though the process $(X_T(h), |h|\leq 1)$ is continuous, the analysis can be reduced to a finite number of points: this is made precise in the proof of Lemma \ref{lem: discretization}. The distance $(\log T)^{-1}$ between these points is consistent with Equation \eqref{eqn: covariance}, as in the case $|h-h'|\approx (\log T)^{-1}$ the covariance is essentially equal to the variance $1/2\log\log T$.

The following analogous problem is useful to keep in mind throughout the proof.
An important example of log-correlated process is branching Brownian motion (BBM), constructed as follows. 
At time $0$, there is a single Brownian motion at $0$. It evolves in time with diffusion constant $1/2$ so the variance at time $t$ is $t/2$. 
We choose $1/2$ to match the variance of $X_T(h)$.
After an exponentially-distributed time, the Brownian motion splits into two independent Brownian motions, with the same branching property. It is not hard to check that after a time $t$ this procedure produces an average of $e^t$ Brownian motions. It is known since Bramson \cite{Bram} that the maximum at time $t$ over all Brownian particles is
\begin{equation}
\label{eqn: BBM}
t-\frac{3}{4}\log t +\mathcal M_t,
\end{equation}
where the fluctuations $\mathcal M_t$ are known in the limit $t\to\infty$. 
In view of Equation \eqref{eqn: FHK}, we see that the extreme statistics of a BBM at time $t$ should be the same as the ones of $(X_T(h),|h|\leq 1)$, after the identification $\log\log T\longleftrightarrow t$. 

What about longer intervals?
It is not hard to check that the maximum of two independent BBM's (or any finite number for that matter) also satisfy \eqref{eqn: BBM}.
 Indeed, this is the same as the situation of a single BBM right after the first splitting, which occurs in a finite time.
Hence, the statistics of the maximum are not affected for $t$ large.
However, if one takes many (depending on $t$) independent BBM's, Equation \eqref{eqn: BBM} should eventually morph to IID statistics. 
Equation \eqref{eqn: covariance} suggests that the process $(X_T(h), |h|\leq (\log T)^\theta)$ should behave like $(\log T)^\theta$ independent copies of $(X_T(h), |h|\leq 1)$, because of the strong decoupling for $h$'s at a distance more than $1$. 
In particular, the right number of independent BBM's needed to match to the subleading order the extreme value statistics of $(X_T(h), |h|\leq (\log T)^\theta)$ would be $e^{t\theta}$.
To sum up, the proofs of Theorems \ref{thm:thetafixed} and \ref{thm:thetamove} are guided by the analysis of the extreme values of $e^{t\theta}$ independent BBM's.
We stress that the precise decay $\theta\sim t^{-\alpha}$ needed to capture the intermediate regime is a non-trivial consequence of Theorem \ref{thm: right tail}. 

The upper bounds of the theorems are derived in Section \ref{sect: UB}.
For $\theta>0$, the proof is not hard and is done in Section \ref{sect: UB thetafixed}. It follows by a simple union bound on the number of points, which is $(\log T)^{\theta}\cdot \log T=(\log T)^{1+\theta}$, since there are $e^t=\log T$ points in intervals of size one, and there are essentially $e^{t\theta}=(\log T)^\theta$ such intervals. 
The case $\theta\downarrow 0$ is much more subtle. It is established by taking a union bound over $e^{t\theta}$ intervals of size one, each of which have a precise right tail given in Theorem \ref{thm: right tail}. The calculation involves every factor in the estimate of the right tail. This is done in Section \ref{sect: UB thetamove}. 
The proof of the right tail of the maximum for $\theta=0$ is given in Section \ref{sect: right tail}.

The proof of the lower bound in Section \ref{sect: LB} is based on the multiscale refinement of the second moment method given in \cite{Kis}.
In the case $\theta=0$ analyzed in \cite{ArguinBeliusHarper}, it is necessary to truncate the second moment by introducing a linear barrier that prevents the partial sums of $X_T(h)$ to be too high, an idea going back to \cite{Bram} for a single BBM.
The linear barrier can be justified as follows: since there are on average $e^k$ Brownian motions at time $k$, a Gaussian estimate yields that the maximum at time $k$ of a single BBM should be $\approx \sqrt{k\cdot \log e^k}=k$.
There are some important variations in the intermediate regime where $\theta\sim t^{-\alpha}$.
Following the above heuristics for $e^{t\theta}$ independent BBM's, the maximum at time $0\leq k\leq t$ of $e^{t\theta}$ independent BBM's is $\sqrt{k\cdot \log e^{k+t\theta}}=\sqrt{k(k+t\theta)}$. In particular, this is much larger than $k$ for small $k$'s!
This means that the barrier should not be too small at the beginning to capture the typical path of the maximizer. 
Since the barrier is fairly high at the beginning, it does not affect the trajectories of a single path very much. 
As a matter of fact, it will be clear from the proof that the barrier is only effective for times larger than $t-t^{\alpha}$. 

It would be interesting to describe the finer asymptotics of the extreme values of $( X_T(h),|h|\leq (\log T)^{\theta})$ down to the fluctuations, a level of precision still elusive for the Riemann zeta function. 
 It seems feasible to prove tightness of the recentered maximum by carrying a finer analysis of the random walk avoiding the specific barrier used for the lower bound.
However, information on the specifics of the fluctuations seem much harder to obtain as one expects non-trivial arithmetic and random-matrix-type corrections of order one whenever $\theta>0$.
 Numerical evidence of this phenomenon appeared in \cite{AmzArgBaiHuRao}.\\

\noindent {\bf Notations.}
We write $f(T)=\OO(g(T))$ whenever $\limsup_{T\to\infty} |f(T)/g(T)| <\infty$ with possibly other parameters fixed, such as $\e$ and $\theta$ which will be clear from context.
We often use Vinogradov's notation $f(T)\ll g(T)$ whenever $f(T)=\OO(g(T))$.
 We also use $f(T)=\oo(g(T))$ whenever $\lim_{T\to\infty} |f(T)/g(T)|=0.$
Some expressions in the proofs are more palatable in a $\log\log$ scale. The parameters at that scale will be denoted by the corresponding lower case letter. 
For example, we write respectively 
\begin{equation}
\label{eqn: loglog}
 k=\log\log K\quad \ell=\log\log L\quad t=\log\log T,\quad  \text{for $K,L,T$}.
\end{equation}
Finally, we use the double-bracket notation $[\![k,t]\!]$ for $[k,t]\cap \Z$, the integers in the interval $[k,t]$.\\

\noindent {\bf Acknowledgements.}
The authors would like to thank the referee for the numerous insightful comments that led to a substantial improvement of the first version of the paper. 
The research of L.-P.~A. is supported in part by the granst NSF CAREER~DMS-1653602 and NSF~DMS-2153803. G.~D. gratefully acknowledges support from the European Union's Horizon 2020 research and innovation programme under the Marie Sk{\l}odowska-Curie Grant Agreement No. 754411.
The research of L.~H. is supported in part by the Deutsche Forschungsgemeinschaft (DFG, German Research Foundation)  through Project-ID 233630050 -TRR 146, Project-ID 443891315  within SPP 2265 and Project-ID 446173099.

\section{Probability Estimates for the Model}
\label{sect: MGF}
This preliminary section is devoted to giving more details about the correlation structure of the process $X_T$. 
It is similar to Section 2 of \cite{ArguinBeliusHarper}. The results here are more straightforward, as the process is exactly Gaussian.
We include them for completeness.

In some cases, we need to study the process in a smaller range of primes. For this purpose, we define, for $1\leq K < L\leq T$,
\begin{equation}
\label{eqn: XKL}
X_{K,L}(h)= \sum_{K<p \leq L} \frac{\re (G_p p^{- i h})}{p^{1/2}}, \quad |h|\leq (\log T)^\theta.
\end{equation}
We simply write $X_K(h)$ for $X_{1,K}(h)$. 
The Laplace transform of the process is easy to compute:
\begin{lemma}\label{lem: MGF}
Let $1\leq K < L\leq T$. For $\lambda, \lambda' \in \R$ and $|h|,|h'|\in \R$, we have
\begin{equation}\label{char_fun_bias}
\begin{aligned}
&\E[e^{\lambda X_{K,L}(h)+\lambda' X_{K,L}(h')}]=
\exp\Big(\sum_{K< p \leq L} \frac{1}{4p} \left( \lambda^2 + \lambda'^2 + 2 \lambda \lambda'\cos(|h-h'|\log p) \right) \Big).
\end{aligned}
\end{equation}
\end{lemma}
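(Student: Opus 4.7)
The plan is to compute the joint Laplace transform by exploiting two structural features of the model: the independence of the family $(G_p)$ across primes, and the fact that each summand, being a real linear functional of the Gaussian vector $G_p$, is itself Gaussian. Since $X_{K,L}(h)$ and $X_{K,L}(h')$ share the same randomness only through $G_p$ for each $p$, the joint MGF factorizes over primes:
\begin{equation*}
\E\bigl[e^{\lambda X_{K,L}(h)+\lambda' X_{K,L}(h')}\bigr]=\prod_{K<p\leq L}\E\!\left[\exp\!\left(\tfrac{1}{p^{1/2}}\bigl(\lambda\,\re(G_p p^{-ih})+\lambda'\re(G_p p^{-ih'})\bigr)\right)\right].
\end{equation*}
So the whole task reduces to computing a single factor.

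For one prime $p$, I would write $G_p=A_p+\ii B_p$ with $A_p,B_p$ independent real Gaussians of variance $1/2$ (this is the standard complex normalization ensuring $\E|G_p|^2=1$, matching $\E[X_T(h)^2]=\tfrac12\sum_p 1/p$). Then $\re(G_p p^{-\ii h})=A_p\cos(h\log p)+B_p\sin(h\log p)$, so the exponent in the $p$-th factor is a real linear combination of $A_p$ and $B_p$, hence centered Gaussian. Computing its variance via $\mathrm{Var}(aA_p+bB_p)=\tfrac12(a^2+b^2)$ and using $\cos\alpha\cos\beta+\sin\alpha\sin\beta=\cos(\alpha-\beta)$, the cross-terms combine into a single cosine and one gets
\begin{equation*}
\mathrm{Var}\!\left(\tfrac{1}{p^{1/2}}\bigl(\lambda\,\re(G_p p^{-\ii h})+\lambda'\re(G_p p^{-\ii h'})\bigr)\right)=\tfrac{1}{2p}\bigl(\lambda^2+\lambda'^2+2\lambda\lambda'\cos((h-h')\log p)\bigr).
\end{equation*}

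Applying the standard identity $\E[e^Z]=e^{\mathrm{Var}(Z)/2}$ for a centered real Gaussian $Z$ yields each factor as the exponential of $\tfrac{1}{4p}$ times the displayed bracket, and the product over $K<p\leq L$ gives exactly \eqref{char_fun_bias}, after noting that cosine is even so $\cos((h-h')\log p)=\cos(|h-h'|\log p)$.

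There is essentially no obstacle: everything is linear algebra over Gaussian vectors plus a trigonometric identity, and the lemma is stated for real parameters $\lambda,\lambda'$ so no analytic continuation issue arises. The only thing worth checking carefully is the normalization convention for $G_p$ (variance $1/2$ in each real/imaginary part), since this is what turns the expected $e^{\sigma^2/2}$ exponent into the $\tfrac{1}{4p}$ prefactor rather than $\tfrac{1}{2p}$; it is also what makes the computation consistent with the variance $\tfrac12\log\log T+O(1)$ quoted in the paper and with the covariance statement \eqref{eqn: covariance} that the lemma is meant to underpin.
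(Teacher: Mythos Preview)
Your proof is correct and follows essentially the same approach as the paper: both recognize that $\lambda X_{K,L}(h)+\lambda' X_{K,L}(h')$ is a centered Gaussian and reduce the claim to computing its variance prime by prime. The paper packages the per-prime computation slightly more compactly via the identity $|\lambda p^{-ih}+\lambda' p^{-ih'}|^2=\lambda^2+\lambda'^2+2\lambda\lambda'\cos(|h-h'|\log p)$, whereas you decompose $G_p$ into real and imaginary parts and invoke the addition formula for cosine, but the content is identical.
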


\begin{proof}
This is obvious from the fact that
\begin{equation}
|\lambda p^{-ih} + \lambda' p^{-ih'}|^2 = \lambda^2 + \lambda'^2 + 2 \lambda \lambda'\cos(|h-h'|\log p).
\end{equation}
Therefore the Gaussian random variable $ \lambda X_{K,L}(h) + \lambda' X_{K,L}(h') $ has variance
\begin{equation}
\sum_{K< p \leq L} \frac{1}{4p} \left( \lambda^2 + \lambda'^2 + 2 \lambda \lambda'\cos(|h-h'|\log p) \right)
\end{equation}
and the claim follows.
\end{proof}
Setting $\lambda'=0$ in the above yields the variance of $X_{K,L}(h)$
\begin{equation}
\label{eqn: mertens}
\frac{1}{2}\sum_{K<p\leq L}\frac{1}{p}=\frac{1}{2}(\ell-k)+\OO(e^{-c\sqrt{e^k}}), \quad K>2,
\end{equation}
where we use the $\log\log$-notation \eqref{eqn: loglog}.
The equality is a strong form of Mertens's theorem, and can be proved using the Prime Number Theorem with classical error, see for example \cite[Theorem 6.9]{MonVau2003}, 
\begin{equation}\label{eq: PNT}
 \#\{p \le x: p \mbox{ prime}\} = \int_2^{x} \frac{1}{\log u}\rd u + \OO( x e^{-c \sqrt{ \log x }} ).
\end{equation}
It is not hard to evaluate the same way the cosine sum appearing in the covariance of $X_{K,L}(h)$ and $X_{K,L}(h')$.
\begin{lemma}
\label{lem: cos sum}
Let $1\leq K < L\leq T$. We have, using the notation \eqref{eqn: loglog},
$$
\sum_{K< p \leq L} \frac{1}{p} \cos(\delta \log p) =
\begin{cases}
\ell-k+\OO(\delta^2e^{2\ell})+\OO(e^{-c\sqrt{e^k}})& \text{if $\delta e^\ell\leq 1$,}\\
\OO( \delta^{-1}e^{-k}) +\OO(e^{-c\sqrt{e^k}})&\text{if $\delta e^k> 1$}.
\end{cases}
$$
\end{lemma}
\begin{proof}
This is done in Lemma 2.1 of \cite{ArguinBeliusHarper}. We reproduce the details for completeness.
The sum can be expressed using integration by parts and Equation \eqref{eq: PNT} as
$$
 \int_{K}^{L} \frac{\cos(\delta \log u )}{u \log u} \rd u + \OO( e^{-c\sqrt{\log K}} ).
$$
The claim for $\delta e^\ell\leq 1$ follows from $\cos(\delta \log u) = 1 + \OO(\delta^2(\log u)^2)$.
When $\delta e^k> 1$, we do the change of variable $v=\log u$ and integrate by parts. The integral then becomes
$$
\frac{\sin(\delta v)}{\delta v}\Big|_{\log K}^{\log L}
+
\int_{\log K}^{\log L} \frac{\sin( \delta v )}{ \delta v^2} \rd u\ .
$$
Both terms are $\OO(\delta^{-1} e^{-k})$.
\end{proof}

Based on Lemma \ref{lem: cos sum}, we see that there are two very different regimes for the covariance based on the value of $|h-h'|$: for $1\leq K<L\leq T$,
\begin{enumerate}
\item If $|h-h'|<e^{-\ell}$, we get
\begin{equation}
\E \big[ X_{K,L}(h) X_{K,L}(h') \big] = \frac12 (\ell-k) + \OO(e^{2\ell}|h-h'|^{2}).
\end{equation}
In this case the two variables are almost perfectly correlated.

\item In the case $|h-h'|> e^{-k}$, the oscillatory behavior of the sum causes the variables to decorrelate polynomially
\begin{equation}
\label{eqn: cov decouple}
\E \big[ X_{K,L}(h) X_{K,L}(h') \big] = \OO(e^{-k}|h-h'|^{-1}).
\end{equation}
\end{enumerate}
It is useful to think of $X_{K,L}(h)$ as the sum of increments of a random walk between time $k$ and $\ell$. 
With this in mind, the above dichotomy says that the random walks at $h$ and $h$' are almost the same until time $-\log |h-h'|$, at which point they decorrelate rapidly. 

We make the decorrelation of the random walks more precise in the following lemma. 
The decorrelation will be needed under a tilted measure $\wPP$: for $\lambda,\lambda'\geq 0$, take
\begin{equation}
\label{eqn: two-point biased measure}
\frac{\rd \wPP}{\rd \PP}=\frac{e^{\lambda X_{K,L}(h)+\lambda'X_{K,L}(h')}}{\E[e^{\lambda X_{K,L}(h)+\lambda' X_{K,L}(h')}]}, \quad h,h'\in \R.
\end{equation}
It is straightforward to check by differentiating the logarithm of Equation \eqref{char_fun_bias} that the covariance is unchanged under $\wPP$, and that the mean is
\begin{equation}
\label{eqn: tilted mean}
\wE[X_{K,L}(h)]= \sum_{K< p \leq L} \frac{\lambda +\lambda' \cos(|h-h'| \log p)}{2p}.
\end{equation}

\begin{lemma}
\label{lem: decoupling}
Let $1\leq K < T$. Consider $A_{\ell}, A'_{\ell}$, a collection of intervals in $\R$, indexed by $\ell\in [\![k+1,t]\!]$. 
Then for any $h,h'$ with $|h-h'|>e^{-k}$, we have under the measure $\wPP$ defined by Equation \eqref{eqn: two-point biased measure}:
\begin{multline}\label{eqn:decoupling1}
\wPP(X_{K,L}(h)\in A_\ell, X_{K,L}(h')\in A'_\ell, \ \forall \ell \in [\![k+1,t]\!] )= \\
(1+ \OO((e^{k}|h-h'|)^{-\frac{1}{2}})) \ \wPP(X_{K,L}(h)\in A_\ell, \ \forall \ell ) \ \wPP(X_{K,L}(h')\in A'_\ell \ \forall \ell  ) \\
 +\OO(e^{-c\sqrt{e^{k} |h- h'|}}),
\end{multline}
where the error does not depend on the choice of $A$'s and $c$ denotes an absolute positive constant.
\end{lemma}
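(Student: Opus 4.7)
The plan is to decompose the two processes into contributions from disjoint logarithmic ranges of primes, reducing the problem to a product over scales of two-dimensional Gaussian pairs with small cross-correlation, and then to control the density ratio between the true joint law and the product of marginals via a Gaussian concentration inequality. Setting $L_j := \exp(e^j)$ for $j \in [\![k,t]\!]$, I would write $X_{K, L_\ell}(h) = \sum_{j=k+1}^{\ell} Y_j^h$ with $Y_j^h := X_{L_{j-1}, L_j}(h)$. Since the scale-$j$ pair $(Y_j^h, Y_j^{h'})$ only involves primes in $(L_{j-1}, L_j]$, these pairs are mutually independent across $j$ under both $\PP$ and $\wPP$. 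Each is a two-dimensional Gaussian (with mean shifted by \eqref{eqn: tilted mean} under the tilt but with unchanged covariance) whose components have variance of order one by Mertens restricted to a single scale, and correlation $\rho_j = \OO(e^{-j}|h-h'|^{-1})$ coming from \eqref{eqn: cov decouple}; this bound is legitimate because $|h-h'| > e^{-k}$ forces $|h-h'|\log p > 1$ for every prime in $(L_{j-1}, L_j]$. I then introduce the auxiliary measure $\wPP_0$ under which $(Y_j^h)_j$ and $(Y_j^{h'})_j$ are independent with their $\wPP$-marginal laws: by construction, the value of $\wPP_0$ on the joint event is exactly the product of marginals on the right-hand side of \eqref{eqn:decoupling1}.

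The density ratio $R := \rd\wPP/\rd\wPP_0$ factorises as $\prod_j R_j$, and an explicit two-dimensional Gaussian computation---with the means cancelling since both measures have identical marginals at every scale---gives
\begin{equation*}
\log R = -\tfrac{1}{2}\sum_j \log(1-\rho_j^2) + \sum_j \frac{\rho_j\, \tilde Y_j^h \tilde Y_j^{h'}}{(1-\rho_j^2)\sqrt{v_j^h v_j^{h'}}} - \tfrac{1}{2}\sum_j \frac{\rho_j^2}{1-\rho_j^2} \left[\frac{(\tilde Y_j^h)^2}{v_j^h} + \frac{(\tilde Y_j^{h'})^2}{v_j^{h'}}\right],
\end{equation*}
where $\tilde Y_j^\bullet := Y_j^\bullet - \wE[Y_j^\bullet]$ and $v_j^\bullet$ denotes the variance of $Y_j^\bullet$. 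The only non-negligible contribution is the bilinear sum $S := \sum_j \rho_j \tilde Y_j^h \tilde Y_j^{h'}/\sqrt{v_j^h v_j^{h'}}$; the logarithmic and quadratic pieces are uniformly $\OO(\sum_j \rho_j^2) = \OO((e^k|h-h'|)^{-2})$ on a high-probability Gaussian event and are thus dwarfed by the target. Under $\wPP_0$, $S$ is a diagonal Gaussian chaos in two independent sequences of standard Gaussians with weights $\rho_j$, so by Hanson--Wright
\begin{equation*}
\wPP_0(|S| > x) \leq 2 \exp\!\Bigl(-c\, \min\bigl(x^2 e^{2k}|h-h'|^2,\; x\, e^k|h-h'|\bigr)\Bigr);
\end{equation*}
at $x = (e^k|h-h'|)^{-1/2}$ both arguments of the minimum equal $\sqrt{e^k|h-h'|}$, which matches the exponent advertised in the lemma.

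Define the good event $G := \{|S| \leq (e^k|h-h'|)^{-1/2}\}$, on which $R = 1 + \OO((e^k|h-h'|)^{-1/2})$ uniformly. Letting $E$ denote the joint event in \eqref{eqn:decoupling1}, I would split
\begin{equation*}
\wPP(E) = \wE_0[R\, \1_{E \cap G}] + \wE_0[R\, \1_{E \cap G^c}].
\end{equation*}
The first term equals $(1 + \OO((e^k|h-h'|)^{-1/2}))\, \wPP_0(E)$ up to an additive $\OO(\wPP_0(G^c)) = \OO(e^{-c\sqrt{e^k|h-h'|}})$ arising from replacing $\wPP_0(E \cap G)$ by $\wPP_0(E)$; the second term is $\OO(\wPP(G^c))$, bounded by Cauchy--Schwarz together with the explicit identity $\wE_0[R^2] = \prod_j (1 - \rho_j^2)^{-1} = \OO(1)$, yielding $\OO(e^{-c'\sqrt{e^k|h-h'|}})$. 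Since $\wPP_0(E)$ factors as the product of the two marginal probabilities by construction, this is exactly \eqref{eqn:decoupling1}; the matching lower bound is derived identically. The main obstacle is the calibration of $G$: the threshold $(e^k|h-h'|)^{-1/2}$ sits precisely at the crossover between the sub-Gaussian regime (driven by $\sum_j \rho_j^2$) and the sub-exponential regime (driven by $\max_j \rho_j$) of the bilinear form $S$, and it is this crossover that produces the exponent $\sqrt{e^k|h-h'|}$ in the additive error, in parity with the multiplicative error on $G$.
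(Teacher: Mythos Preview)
Your approach is correct and genuinely different from the paper's. The paper works scale by scale: at each $\ell$ it introduces a truncation event $E_\ell=\{\|(Y_\ell(h),Y_\ell(h'))-(\mu_\ell,\mu_\ell)\|\le (e^\ell|h-h'|)^{1/4}\}$, expands the two-dimensional Gaussian density around the uncorrelated one on $E_\ell$ to get a multiplicative error $1+\OO((e^\ell|h-h'|)^{-1/2})$ and an additive error $\exp(-c\sqrt{e^\ell|h-h'|})$, and then takes the product over $\ell$ via a Markov decomposition (the per-scale errors being uniform in the intervals). You instead work globally: you write the full Radon--Nikodym derivative $R=\rd\wPP/\rd\wPP_0$ against the product measure, identify $\log R$ as the diagonal bilinear chaos $S=\sum_j\rho_j Z_j^{(1)}Z_j^{(2)}$ plus $\OO(\rho^2)$-weighted terms, and control $S$ in one shot by Hanson--Wright, with the Cauchy--Schwarz step using $\wE_0[R^2]=\prod_j(1-\rho_j^2)^{-1}=\OO(1)$ to handle $G^c$. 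Your route is more conceptual and packages the crossover between the sub-Gaussian and sub-exponential regimes of $S$ neatly (this is exactly what produces the exponent $\sqrt{e^k|h-h'|}$); the paper's route is more elementary, needing only a second-order density expansion and Chernoff, at the price of tracking errors through a product. Two small clean-ups: your claim that the quadratic pieces are $\OO((e^k|h-h'|)^{-2})$ on a high-probability event is stronger than what holds with the required $\exp(-c\sqrt{\,\cdot\,})$ tail---you only need (and only get, at that tail) that they are $\OO((e^k|h-h'|)^{-1/2})$, which suffices; and at $x=(e^k|h-h'|)^{-1/2}$ the two arguments of the Hanson--Wright minimum are $e^k|h-h'|$ and $\sqrt{e^k|h-h'|}$, not both equal to the latter, though the minimum is indeed $\sqrt{e^k|h-h'|}$ as you use.
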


\begin{proof}
As the process $X$ has independent increments, the probability can be decomposed in terms of the variables
\begin{equation}
\label{eqn: Y}
Y_\ell(h)=\sum_{e^{\ell-1} < \log p \leq e^{\ell} } \frac{\re (G_p p^{-ih}) }{p^{1/2}}.
\end{equation}
This is $X_{L^{1/e},L}(h)$ in the notation of Equation \eqref{eqn: tilted mean}.
We prove that for every $\ell\in [\![k+1,t]\!]$
\begin{equation}\label{step_decoup_estimate}
\begin{aligned}
&\wPP \left( Y_\ell(h) \in B_\ell, \ Y_\ell(h')\in B_\ell' \right)=\\
&(1+\OO((e^{\ell}|h-h'| )^{-1/2}) \wPP \left( Y_\ell(h) \in B_\ell \right)
\wPP \left( Y_\ell(h')\in B_\ell' \right)
+ \OO(e^{-c\sqrt{e^{\ell} |h- h'|}}),
\end{aligned}
\end{equation}
for some intervals $B_\ell$ and $B_\ell'$,
where the error terms are uniform in $\ell$ and in the choice of $B$'s.
Equation \eqref{eqn:decoupling1} then follows by writing the probability increment by increment using a Markov-type decomposition and taking the product.

To prove Equation \eqref{step_decoup_estimate}, consider first the subset 
$$
E_\ell = \left\{  \| (Y_\ell(h),Y_\ell(h')) - (\mu_\ell,\mu_\ell) \|_{\R^2} \leq (e^{\ell}|h-h'|)^{1/4} \right\},
$$
where 
\begin{equation}
\label{eqn: mu_ell}
\mu_\ell=\wE[Y_\ell]= \sum_{e^{\ell-1}<\log p \leq e^\ell} \frac{\lambda +\lambda' \cos(|h-h'| \log p)}{2p}
\end{equation}
by Equation \eqref{eqn: tilted mean}. 
A straightforward Gaussian estimate (or a Chernoff bound using Lemma \ref{lem: MGF}) yields
$
\wPP (E_\ell^{\rm c} )
\leq
\exp \left( {-c\sqrt{e^{\ell}|h-h'|}} \right).
$
This accounts for the error term in Equation \eqref{step_decoup_estimate}. 

It remains to estimate the Gaussian density on the event $E_\ell$. The covariance matrix $\Sigma_\ell$ of  $(Y_\ell(h),Y_\ell(h'))$ is
\begin{equation}
\Sigma_\ell=
\left(
\begin{matrix}
\sigma_\ell^2 & \rho_\ell\\
\rho_\ell & \sigma_\ell^2
\end{matrix}
\right),
\quad \rho_\ell=\OO(e^{-\ell}|h-h'|^{-1}), \quad \sigma_\ell^2=\frac{1}{2}+\OO(e^{-c\sqrt{e^\ell}}),
\end{equation}
by Equation \eqref{eqn: cov decouple}.
The inverse covariance admits the following expansion around the identity.
\begin{equation}
\label{eqn: Sigma}
\Sigma_\ell^{-1}=\sigma_\ell^{-2}\left\{
 \left(
\begin{matrix}
1 & 0\\
0 & 1
\end{matrix}
\right)
- \e_\ell 
\left(
\begin{matrix}
0 & 1\\
1& 0
\end{matrix}
\right)
+ \OO(\e^2_\ell )
\left(
\begin{matrix}
1 & 1\\
1& 1
\end{matrix}
\right)
\right\},
\quad \e_\ell=\frac{\rho_\ell}{\sigma_\ell}=\OO(e^{-\ell}|h-h'|^{-1}).
\end{equation}
For $(y,y')\in E_\ell$, the above implies the following estimate:
$$
(y-\mu_\ell,y'-\mu_\ell)^T \Sigma_\ell^{-1}(y-\mu_\ell,y'-\mu_\ell)
=
\frac{(y-\mu_\ell)^2+(y'-\mu_\ell)^2}{\sigma_\ell^2}+\OO((e^{\ell}|h-h'|)^{-1/2}).
$$
The estimate for the determinant of $\Sigma^{-1}$ is immediate from Equation \eqref{eqn: Sigma}.
Therefore, the density of the Gaussian vector $(Y_\ell(h),Y_\ell(h'))$ under $\wPP$ is
$$
\big(1+ \OO((e^{\ell}|h-h'|)^{-1/2})\big)
\frac{1}{ 2\pi \sigma^2 } \exp\Big(-\frac{(y-\mu_\ell)^2+(y'-\mu_\ell)^2}{2\sigma_\ell^2}\Big).
$$
This proves Equation \eqref{step_decoup_estimate} and concludes the proof of the lemma.
\end{proof}

The above proof can be also used to compare the probability to the one of a random walk with increments of variance exactly equal to $1/2$.
We will only need this in the case of one point $h$. 
To use the standard version of the ballot theorem, we will need to compare the random walk at $h$ with a random walk with increments of variance exactly equal to $1/2$. 
\begin{corollary}
\label{cor: var 1/2}
Let $1\leq K < T$. Consider $A_{\ell}$, a collection of intervals in $\R$, indexed by $\ell\in [\![k+1,t]\!]$. Consider $\mathcal Y_\ell$, $\ell\geq 1$, i.i.d.~Gaussian random variables of mean $\mu_\ell$ as in Equation \eqref{eqn: mu_ell}, and variance $1/2$ under $\wPP$. 
Then for any $h\in\R$ and $\ell>k$, we have under the measure $\wPP$ defined by Equation \eqref{eqn: two-point biased measure}:
\begin{equation}
\label{eqn:decoupling2}
\begin{aligned}
&\wPP(X_{K,L}(h)\in A_\ell, \forall \ell\in [\![k+1,t]\!])\\
&=\big(1+\OO(e^{-c\sqrt{e^k}})\big)
\wPP\Big(\sum_{j=k+1}^\ell \mathcal Y_j\in A_\ell, \forall \ell\in [\![k+1,t]\!]\Big) +\OO(e^{-ce^{2k}}).
\end{aligned}
\end{equation}
\end{corollary}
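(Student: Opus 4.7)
The strategy is to decompose $X_{K,\cdot}(h)$ into its independent Gaussian increments $Y_j(h)$ from \eqref{eqn: Y} on the dyadic blocks of primes in the $\log\log$ scale, and to replace each of them by a Gaussian $\mathcal{Y}_j$ of variance exactly $1/2$. Independence of the $(Y_j)$'s is immediate since they involve disjoint sets of primes, and it is preserved under the tilt \eqref{eqn: two-point biased measure}, which shifts means without affecting the covariance. Thus under $\wPP$ each $Y_j(h)$ is Gaussian with variance $\sigma_j^2 = \tfrac{1}{2} + O\!\bigl(e^{-c\sqrt{e^j}}\bigr)$ by Mertens' estimate \eqref{eqn: mertens}, and with mean $\wE[Y_j(h)]$ given by \eqref{eqn: tilted mean}.

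Next I would reduce the joint statement to a pointwise density comparison. Both probabilities in \eqref{eqn:decoupling2} can be written as integrals of products of one-dimensional Gaussian densities over a common region (obtained from the $A_\ell$'s after translating by the partial sums of the tilted means $\wE[Y_j(h)]$). For each scale $j$, the pointwise density ratio at a value $y$ is
\[
\frac{(2\pi\sigma_j^2)^{-1/2}\, e^{-y^2/(2\sigma_j^2)}}{\pi^{-1/2}\, e^{-y^2}}
\;=\; \frac{1}{\sigma_j \sqrt{2}}\, \exp\!\Big(y^2 \cdot \tfrac{2\sigma_j^2 - 1}{2\sigma_j^2}\Big)
\;=\; 1 + O\!\bigl((1 + y^2)\, e^{-c\sqrt{e^j}}\bigr).
\]
To make this comparison uniform in $A_\ell$, I would intersect with the typical event $T_j = \{\,|Y_j(h) - \wE[Y_j(h)]| \leq e^{j/4}\,\}$, whose complement has probability at most $\exp(-c\, e^{j/2})$ by a standard Gaussian tail bound (and similarly for $\mathcal{Y}_j$). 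On $\bigcap_{j=k+1}^{\ell} T_j$, the integrand ratio is uniformly $1 + O\!\bigl(e^{-c'\sqrt{e^j}}\bigr)$, because $y^2 e^{-c\sqrt{e^j}}$ is dominated by $e^{j/2 - c\sqrt{e^j}}$, which itself is $O(e^{-c'\sqrt{e^j}})$ for some smaller $c'$. Multiplying the per-scale ratios and adding back the negligible $T_j^c$-losses, the total error becomes $\prod_{j > k}\!\bigl(1 + O(e^{-c'\sqrt{e^j}})\bigr) = 1 + O\!\bigl(e^{-c''\sqrt{e^k}}\bigr)$, since the series $\sum_{j > k} e^{-c'\sqrt{e^j}}$ converges doubly exponentially and is dominated by its first term.

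The main obstacle is a bookkeeping one rather than an analytic one: when $(\lambda, \lambda') \neq (0,0)$ the tilted mean $\wE[Y_j(h)]$ is nonzero, so the events $\{X_{K, L}(h) \in A_\ell\}$ must be compared to the centered walk $\sum \mathcal{Y}_j$ only after translating $A_\ell$ by the appropriate cumulative mean shift. Since every estimate above is uniform in this translation, the final error is unchanged and the corollary follows, closely mirroring the single-scale step \eqref{step_decoup_estimate} inside the proof of Lemma \ref{lem: decoupling}.
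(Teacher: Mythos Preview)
Your proposal is correct and follows essentially the same route as the paper: decompose $X_{K,\cdot}(h)$ into the independent increments $Y_j(h)$, truncate each increment to a typical range, compare the Gaussian density with variance $\sigma_j^2=\tfrac12+O(e^{-c\sqrt{e^j}})$ to the density with variance exactly $\tfrac12$ on that range, and take the product over $j>k$. The paper uses the cruder truncation $E_\ell=\{|Y_\ell-\mu_\ell|\le e^\ell\}$ (giving tail $O(e^{-ce^{2\ell}})$) in place of your $T_j=\{|Y_j-\mu_j|\le e^{j/4}\}$, but either threshold works since both tails are negligible against $e^{-c\sqrt{e^j}}$. Your explicit remark that the nonzero tilted means $\wE[Y_j(h)]$ amount only to a translation of the integration region is a point the paper's proof leaves implicit, so your write-up is in fact a touch more careful there.
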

\begin{proof}
The proof follows the same line as the one of Lemma \ref{lem: decoupling}, setting $\lambda'=0$ in \eqref{eqn: two-point biased measure}. Consider the increments $Y_\ell$ and the event
$$
E_\ell =\left \{ |Y_\ell(h) - \mu_\ell | \leq  e^{\ell} \right\}.
$$
A Gaussian estimate shows that $\PP(E_\ell^c)\ll e^{-c e^{2\ell}}$. For $y\in E_\ell$, we have that
$$
\frac{1}{\sigma_\ell}e^{-(y-\mu_\ell)^2/2{\sigma_\ell^2}}=(1+\OO(e^{-c\sqrt{e^\ell}}))e^{-(y-\mu_\ell)^2}.
$$
Putting both estimates together yield for any interval $B\subset \R$,
$$
\wPP(Y_\ell(h)\in B)=(1+\OO(e^{-c\sqrt{e^\ell}})) \wPP(\mathcal Y_\ell\in B\cap E_\ell)+\OO(e^{-ce^{2\ell}}).
$$
The probability of $X_{K,L}$ is then obtained by conditioning successively on the $Y_\ell$'s and computing the product. 
\end{proof}
We remark that if the $A_\ell$'s do not grow too fast in such a way that the event $\{X_{K,L}(h)\in A_\ell, \forall \ell\in [\![k+1,t]\!]\}$
is included in the events $E_\ell$ for all $ \ell\in [\![k+1,t]\!]$, the additive error term is not present and we may conclude that 
\begin{equation}
\label{eqn:decoupling3}
\begin{aligned}
&\wPP(X_{K,L}(h)\in A_\ell, \forall \ell\in [\![k+1,t]\!])\\
&=\big(1+\OO(e^{-c\sqrt{e^k}})\big)
\wPP\Big(\sum_{j=k+1}^\ell \mathcal Y_j\in A_\ell, \forall \ell\in [\![k+1,t]\!]\Big).
\end{aligned}
\end{equation}

\section{Upper Bounds}
\label{sect: UB}

\subsection{Upper Tail Tightness for $\theta=0$}
\label{sect: right tail}
The purpose of this section is to prove Theorem \ref{thm: right tail}. 
The starting point is in the spirit of Lemma 2.4 in Bramson, Ding, and Zeitouni \cite{BDZ}  for branching random walks; however, their approach has to be generalized to account for the lack of perfect branching in the model of the zeta function.
Our method is applicable to other log-correlated processes that do not exhibit an exact branching structure. 

As in Equation \eqref{eqn: Y}, we define
\begin{equation}\label{Cauchy_strips}
Y_k(h) = \sum_{e^{k-1} < \log p \leq e^k} \frac{1}{\sqrt{p}} \re (p^{-ih} G_p), \quad k\leq t,
\end{equation}
where $t=\log\log T$.
Each $Y_k(h)$ is a Gaussian variable of variance roughly $1/2$. We also define the partial sums
\begin{equation}
\label{eqn: S}
S_j(h) = \sum_{k=1}^{j} Y_k (h),\quad j\leq t,
\end{equation}
and their recentered version,
\begin{equation}
\ovS_j = S_j - m(j).\, \qquad m(j) = j\Big(1 - \frac34 \frac{\log t}{t}\Big).
\end{equation}
The method requires to introduce a logarithmic barrier: 
\begin{equation}\label{log_barrier}
\psi_j = \log (\min (j, t-j)), \quad 1\leq j\leq t-1.
\end{equation}
so that we have the symmetry $\psi_j = \psi_{t-j}$. We also set $\psi_0=\psi_t=0$. 

The idea of the proof is as follows. 
The goal is to bound the probability $p_{\text{cross}}$ that the value at some $h$ is above the level $m(t) + y$:
\begin{equation}\label{def_pcross}
p_{\text{cross}}
=
\PP \left( \max_{|h|\leq 1} X_{T} (h) > m(t) + y \right).
\end{equation}
We start by bounding $p_{\text{cross}}$ by a more amenable expression.
The event can be decomposed according to the first time $j$ after some fixed time $r$ when the barrier $y+\psi$ is crossed by the maximum over $h$. 
It is convenient to take $r$ away from $1$ for two reasons.
First, the increments for small primes have a variance, which is not close to $1/2$, which makes the estimate such as Proposition \ref{thm_ballot} more cumbersome.
The choice $$r=r(y)=y\vee 1$$ is good. 
Second, it is then possible to get an {\it a priori bound} on $\max_{|h| \leq 1} |S_j (h)|$ for all $r<j\leq t$. 
Indeed, a union bound gives
$$
\begin{aligned}
\PP(\exists r<j\leq t: \max_{|h| \leq 1} |S_j (h)|> 2j +y)
&\ll \sum_{j=r+1}^t e^j \PP( |S_\ell (h)|> 2j +y)\\
&\ll \sum_{j=r+1}^t e^j\cdot e^{-(2j+y)^2/2\sigma_j^2}\\
&\ll e^{-3r}\cdot e^{-4y}e^{-y^2/t}.
\end{aligned}
$$
The first inequality is obtained by a discretization argument (see Corollary 2.6 in \cite{ArguinBeliusHarper} which is proved again in Lemma \ref{lem: discretization} below).
The third inequality comes from the fact that $\sigma_j^2=j/2+\OO(1)$ and $y$ is much smaller than $t$. 
The above is much smaller than the claimed bound in Theorem \ref{thm: right tail}.
Therefore, we can assume without loss of generality in the proof that
\begin{equation}
\label{eqn: a priori}
 \max_{|h| \leq 1} |S_j (h)|\leq 2j +y, \quad \text{for all $r<j\leq t$.}
\end{equation}

We now estimate Equation \eqref{def_pcross} with an union bound on $r<j\leq t$:
\begin{equation}\label{first_time}
p_{\text{cross}}
\leq
\sum_{r<j \leq t-1} \PP \left( \forall r<\ell \leq j \ \max_{|h| \leq 1} \ovS_\ell (h) \leq y + \psi_\ell, \
\max_{|h|\leq 1} \ovS_{j+1}(h) > y+ \psi_{j+1} \right).
\end{equation}
Secondly, the interval $[-1,1]$ can be partitioned into $e^j$ intervals of width $2e^{-j}$ and centers $h^{(i)}_j$ for $i\leq e^j$. 
Therefore, the probability can be bounded by a union bound over these intervals:
\begin{equation}
p_{\text{cross}}
\ll
\sum_{r<j \leq t-1} 
\sum_{i=1}^{e^j} 
\PP \left( \forall r<\ell \leq j \ \max_{|h|\leq 1} \ovS_\ell(h) \leq y + \psi_\ell, \
\max_{|h-h^{(i)}_j|\leq e^{-j}} \ovS_{j+1}(h) > y+ \psi_{j+1} \right).
\end{equation}
Finally, the constraint on the maximum over $[-1,1]$ is limited to $h=h_j$. 
Using the symmetry of the distribution of the model under translation, the above expression then becomes
\begin{equation}\label{bound_pcross}
p_{\text{cross}}
\ll
\sum_{r<j \leq t-1} 
e^j 
\PP \left( \forall r<\ell \leq j \ \ovS_\ell(0)  \leq y + \psi_\ell, \
\max_{|h|\leq e^{-j}} \ovS_{j+1}(h) > y+ \psi_{j+1} \right).
\end{equation}
We now make a simple but important observation:
the overshoot of $\max_{|h|\leq e^{-j}} \ovS_{j+1}(h)$ is either due to a large value of the increment $\ovS_{j+1}- \ovS_j$ at $h=0$ or to 
a large value of the difference $S_{j+1}(h)-S_{j+1}(0)$ for some $|h|\leq e^{-j}$.
The first case is exactly the one that is obtained for an exact branching random walk. 
The second case is due to the fact the correlation structure of the model is not exactly a tree.
However, these fluctuations can be controlled very precisely by a chaining argument because of the log-correlations.
Specifically, define the following event that includes the barrier and the value of the endpoint $S_j$:
\begin{equation}
\label{eqn: Bj def}
\mathscr{B}_{j}(x) = 
\left\{
\forall r< \ell \leq j, \ \ovS_\ell(0) \leq y + \psi_\ell
\quad \text{and} \quad
\ovS_j(0) \in I_x 
\right\}, \quad j\leq t, \ x\in \Z,
\end{equation}
where $I_x=(x,x+1]$ for $x>0$, $I_x=(x-1,x]$ if $x<0$, and $I_x=[-1,1]$ if $x=0$.
Performing a union bound on the range of values of $\ovS_j(0)$, we prove the following two estimates:
\begin{lemma}
\label{lem: center}
With the notation above, we have for $y>0$ and $y=\oo\big(\tfrac{\log\log T}{\log\log\log T}\big)$,
\begin{equation}
\label{eqn: to prove 1}
\sum_{r< j \leq t-1}
\sum_{x\leq y+\psi_j} e^j\P \left( \mathscr{B}_{j}(x), \ovS_{j+1}(0) - \ovS_j (0) > y+\psi_{j+1}-x \right)\ll(y+1) e^{-2y} e^{-y^2/t}.
\end{equation}
\end{lemma}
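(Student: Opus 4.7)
The approach exploits the independence of the increment $\ovS_{j+1}(0) - \ovS_j(0) = Y_{j+1}(0) - (m(j+1)-m(j))$ from the event $\mathscr{B}_j(x)$, which is measurable with respect to $(Y_1, \ldots, Y_j)$. The joint probability factors into $\P(\mathscr{B}_j(x)) \cdot \P(\ovS_{j+1}(0)-\ovS_j(0)>y+\psi_{j+1}-x)$, and each piece will be bounded separately.

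The increment is Gaussian of mean $-1+O(1/j)$ and variance $\tfrac{1}{2} + O(e^{-c\sqrt{e^j}})$, so a standard Gaussian tail bound gives
\[
\P\bigl(\ovS_{j+1}(0) - \ovS_j(0) > u\bigr) \ll e^{-(u+1)^2}, \qquad u \ge 0.
\]
For $\P(\mathscr{B}_j(x))$, Corollary \ref{cor: var 1/2} reduces the partial sums to a standard Gaussian random walk of variance exactly $1/2$ per step, at multiplicative cost $1+o(1)$. A classical ballot-type estimate for such a walk constrained below a slowly varying barrier, in the spirit of Bramson--Ding--Zeitouni \cite{BDZ}, then yields
\[
\P(\mathscr{B}_j(x)) \ll \frac{(1+y)\,\bigl(1+(y+\psi_j-x)_+\bigr)}{j^{3/2}}\exp\!\left(-\frac{(m(j)+x)^2}{j}\right),
\]
in which the two linear factors are the ballot gains at the endpoints and $j^{-1/2}e^{-(m(j)+x)^2/j}$ is the Gaussian density for $\ovS_j(0)$ at level $x$.

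Combining the two estimates and substituting $v := y+\psi_j-x \ge 0$, the joint exponent becomes $(m(j)+y+\psi_j-v)^2/j + (1+v)^2$. Completing the square in $v$, this has minimum $(A+1)^2/(j+1)$ at $v^* = (A-j)/(j+1)=O((y+\log t)/j)$, with $A := m(j)+y+\psi_j$. The rapid Gaussian decay localizes the sum over $x$ near $v^*$, producing an overall factor $(1+v^*)\exp(-(A+1)^2/(j+1))$. Expanding with $m(j)=j-\tfrac{3}{4}\log j$ gives
\[
\frac{(A+1)^2}{j+1} = j + 1 + 2y + 2\psi_j - \tfrac{3}{2}\log j + \frac{y^2}{j} + O(1),
\]
so after multiplying by $e^j$ from the outer sum, the contribution of each $j$ is at most $C(1+y)(1+y/j)\,e^{-2y}\,e^{-2\psi_j}\,e^{-y^2/j}$. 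Writing the remaining sum as $\sum_j e^{-2\psi_j}e^{-y^2/j} + y\sum_j (1/j)\, e^{-2\psi_j}\, e^{-y^2/j}$ and using $e^{-y^2/j}\le e^{-y^2/t}$ for all $j\le t$ along with the convergence $\sum_{j=1}^{t-1}\min(j,t-j)^{-2}=O(1)$, the first sum is $O(e^{-y^2/t})$ and the second is $O(e^{-y^2/t}/t)$; since $y=o(t)$, the total is the claimed $O((1+y)e^{-2y}e^{-y^2/t})$. The main technical hurdle is the ballot estimate itself: although classical in nature, the curvature of the barrier $\psi_\ell$ and the drift correction $-\tfrac{3}{4}\log\ell$ in $m(\ell)$ introduce error terms of order $O(y\log t/j)$ in the expansion of $(A+1)^2/(j+1)$ that must be handled uniformly in $j$, with the problematic small-$j$ regime redeemed by the very small factor $e^{-y^2/j}$.
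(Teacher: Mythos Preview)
Your approach is essentially the same as the paper's --- ballot estimate for $\P(\mathscr{B}_j(x))$, Gaussian/exponential tail for the increment, then sum --- but the packaging differs in two ways worth noting. First, the paper makes the exponential tilt $\rd\ovP/\rd\P \propto e^{\lambda S_j}$ with $\lambda=m(j)/\sigma_j^2$ explicit and applies Proposition~\ref{thm_ballot} to the \emph{centered} walk under $\ovP$; this yields the clean Gaussian factor $e^{-x^2/j}$ rather than your $e^{-(m(j)+x)^2/j}$, so the $-\tfrac34\log j$ shift never enters the quadratic term and no optimisation over $v$ is needed. Second, the paper avoids your small-$j$/large-$j$ bookkeeping by the symmetry $\psi_j=\psi_{t-j}$: pairing $j$ with $t-j$ reduces to $j\ge t/2$, where the function $v\mapsto e^{-(y+\psi_j-v)^2/j}e^{-100v}$ is maximised at $v=0$, giving directly $e^{-(y+\psi_j)^2/j}\le e^{-y^2/t}$. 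Both routes are valid; the paper's is shorter and sidesteps the $O(y\log t/j)$ error terms you flag at the end.

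One slip: your claim that $y\sum_j (1/j)\,e^{-2\psi_j}\,e^{-y^2/j}=O(e^{-y^2/t}/t)$ does not follow from $e^{-y^2/j}\le e^{-y^2/t}$ alone, since $\sum_{j\le t/2} j^{-3}=O(1)$, not $O(1/t)$; that argument gives only $O(y\,e^{-y^2/t})$, one power of $y$ too many. The fix is easy --- for $j\le t/2$ use the sharper $e^{-y^2/j}\le e^{-2y^2/t}$ to absorb the extra $y$, and for $j>t/2$ use $1/j\le 2/t$ --- but as written the final bookkeeping is off.
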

This is the same estimate one would prove for a branching random walk. 
The new input is an estimate for the overshoot due to a large value in the difference in a small neighborhood:
\begin{lemma}
\label{lem: fluct}
With the notation above, we have for $y>0$ and  $y=\oo\big(\tfrac{\log\log T}{\log\log\log T}\big)$,
\begin{multline}
\label{eqn: to prove 2}
\sum_{r<j \leq t-1}
\sum_{x\leq y+\psi_{j+1}} e^j\P \left( \mathscr{B}_{j+1}(x), \max_{|h|\leq e^{-j}}\ovS_{j+1}(h) - \ovS_{j+1} (0) > y+\psi_{j+1}-x \right) \\
\ll(y+1) e^{-2y} e^{-y^2/t}.
\end{multline}
\end{lemma}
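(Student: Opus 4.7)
The plan is to combine a ballot-type estimate for the random walk $(S_\ell(0))_{\ell\leq j+1}$ with a Gaussian concentration bound on the spatial fluctuation field $F_j(h) := S_{j+1}(h) - S_{j+1}(0)$ on the microscopic interval $|h| \leq e^{-j}$. Since $m(j+1)$ cancels in the difference, the overshoot event in \eqref{eqn: to prove 2} rewrites as $\{\max_{|h|\leq e^{-j}} F_j(h) > u\}$ with $u := y + \psi_{j+1} - x \geq 0$. For the first ingredient, observe that $\mathscr B_{j+1}(x)$ depends only on the Gaussian random walk $S_\ell(0) = \sum_{k\leq \ell} Y_k(0)$, whose increments are independent and have variance $\tfrac12 + o(1)$ by \eqref{eqn: mertens}. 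A standard Bramson-type ballot estimate yields, writing $N := j+1$,
\[
\P\bigl(\mathscr B_{j+1}(x)\bigr) \ll \frac{(y+1)(y+\psi_{j+1}-x+1)}{N^{3/2}}\exp\!\Bigl(-\frac{(x+m(N))^2}{N}\Bigr) \ll (y+1)(y+\psi_{j+1}-x+1)\, e^{-N}e^{-2x}e^{-x^2/N},
\]
after expanding $(x+m(N))^2/N$ using $m(N) = N - \tfrac34\log N$.

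For the second ingredient, Lemma \ref{lem: MGF} and a second-order Taylor expansion of $\cos(h\log p)$ give $\mathrm{Var}(F_j(h))\ll 1$ on $|h|\leq e^{-j}$: each slice $k\leq j$ contributes $O(e^{-2j}e^{2k})$ and the top slice $k=j+1$ contributes $O(1)$. Since $F_j$ is Gaussian and log-correlated over only $O(1)$ scales on $[-e^{-j},e^{-j}]$, a Dudley chaining estimate gives $\E[\max_{|h|\leq e^{-j}} F_j(h)] = O(1)$ and Borell--TIS yields $\P(\max_{|h|\leq e^{-j}} F_j(h) > u) \ll e^{-cu^2}$ for some $c>0$ and $u\geq 1$. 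The same cosine expansion bounds $\E[F_j(h) Y_k(0)] = O(e^{-2j}e^{2k})$ for $k\leq j$ and $O(1)$ for $k=j+1$, so a Gaussian regression of $F_j(h)$ onto $(Y_k(0))_{k\leq j+1}$ combined with Cauchy--Schwarz on the event $\mathscr B_{j+1}(x)$ limits the conditional mean shift of $F_j(h)$ to $O(1)$. Hence the sub-Gaussian tail survives the conditioning:
\[
\P\Bigl(\mathscr B_{j+1}(x),\ \max_{|h|\leq e^{-j}} F_j(h) > u\Bigr)\ll \P(\mathscr B_{j+1}(x))\,e^{-cu^2}.
\]

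Combining these estimates and substituting $u = y+\psi_{j+1}-x$, the left-hand side of \eqref{eqn: to prove 2} is bounded by
\[
(y+1)\, e^{-2y} \sum_{j\leq t-1} e^{-2\psi_{j+1}} \sum_{u\geq 0} (u+1)\, e^{2u-cu^2}\exp\!\Bigl(-\tfrac{(y+\psi_{j+1}-u)^2}{j+1}\Bigr).
\]
Maximizing the Gaussian exponent over $u$ yields the factor $\exp(-(y+\psi_{j+1})^2/(j+1))$ when $c(j+1)\gg 1$, and the trivial bound $e^{-cy^2}\leq e^{-y^2/t}$ when $j$ is bounded. Using $\psi_{j+1}\geq 0$ and $j+1\leq t$, one has $(y+\psi_{j+1})^2/(j+1)\geq y^2/t$; and the outer sum $\sum_{j\leq t-1}\min(j+1,t-j-1)^{-2}$ is bounded by $2\sum_{k\geq 1}k^{-2}<\infty$, which yields the claimed bound $\ll (y+1)e^{-2y}e^{-y^2/t}$.

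The main obstacle is the decoupling step: since $F_j(h)$ draws contributions from every prime slice $k\leq j+1$, it is correlated with the entire random walk $(S_\ell(0))_{\ell\leq j+1}$ that defines the barrier event. The key structural fact is that all these covariances are uniformly $O(1)$, which bounds the conditional mean shift of $F_j(h)$ even on the restrictive event $\mathscr B_{j+1}(x)$ and preserves the Gaussian tail. This is precisely the ingredient that allows a Bramson-type argument---originally tailored to the exact branching structure of BRW---to be adapted to the non-branching log-correlated model $X_T$.
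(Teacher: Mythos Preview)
The decoupling step contains a genuine gap. Writing the Gaussian regression $F_j(h) = \sum_{\ell\leq j+1} \alpha_\ell(h) Y_\ell(0) + R(h)$ with $R(h)$ independent of the walk, you correctly observe that $\alpha_\ell(h) = O(e^{-2(j-\ell)})$ for $\ell\leq j$; but the top coefficient $\alpha_{j+1}(h) = \rho_{j+1}/\sigma_{j+1}^2 - 1$ is of order one for $h$ near $e^{-j}$, and in fact $|\alpha_{j+1}|$ can exceed $1$ since $\rho_{j+1} = \E[Y_{j+1}(h)Y_{j+1}(0)]$ can be negative there. On the event $\mathscr{B}_{j+1}(x)$ you only know $\ovS_{j+1}(0)\in(x-1,x]$ and $\ovS_j(0)\leq y+\psi_j$, so the increment
\[
Y_{j+1}(0) = (\ovS_{j+1}(0)-\ovS_j(0)) + (m(j+1)-m(j)) \geq -u + O(1),
\]
and this lower bound is attained when $\ovS_j(0)$ sits at the barrier. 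Since $\alpha_{j+1}(h)<0$, the term $\alpha_{j+1}(h)Y_{j+1}(0)$ can then be as large as $|\alpha_{j+1}|\,u$, which is comparable to $u$ itself. The conditional mean shift is therefore \emph{not} $O(1)$ uniformly on $\mathscr{B}_{j+1}(x)$, and the product bound $\P(\mathscr{B}_{j+1}(x))\,e^{-cu^2}$ is not established by your argument. No appeal to ``Cauchy--Schwarz on the event'' repairs this: the obstruction is that a single increment $Y_{j+1}(0)$, carrying an order-one regression weight, is unbounded on the barrier event.

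The paper avoids this by not attempting to factorize. A dyadic chaining on $[-e^{-j},e^{-j}]$ replaces $F_j(h)$ by increments $S_{j+1}(h_{k+1})-S_{j+1}(h_k)$ with $|h_{k+1}-h_k|\leq e^{-j}2^{-k}$; this extra factor $2^{-k}$ forces \emph{every} projection coefficient $\pi_\ell$, including the top one, to be $\ll 2^{-k}$. Then, instead of conditioning, a Chernoff bound with parameter $\lambda\ll 2^k$ is applied and the cross term $e^{\lambda\sum_\ell \pi_\ell Y_\ell(0)}$ is absorbed as a second change of measure on the walk: this induces only an $O(\lambda 2^{-k})=O(1)$ cumulative drift, under which the ballot estimate for $\mathscr{B}_{j+1}(x)$ survives unchanged. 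Both ingredients---chaining to tame the top slice, and tilting rather than conditioning---are essential, and neither is present in your route through Borell--TIS under the conditioned law.
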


\begin{proof}[Proof of Theorem \ref{thm: right tail}]
 From the previous discussion, it suffices to bound the right-hand side of Equation \eqref{bound_pcross}. 
As explained above, this splits into the two cases given by Equations \eqref{eqn: to prove 1} and \eqref{eqn: to prove 2}, thereby proving the claim.
\end{proof}

We now prove the two lemmas. 
The first proof is along the lines of Lemma 2.4 of \cite{BDZ}.
\begin{proof}[Proof of Lemma \ref{lem: center}]
Consider the change of measure, from $\P$ to $\ovP$, as in Equation \eqref{eqn: two-point biased measure} with $\lambda'=0$,
\begin{equation}\label{CoM1}
\frac{\dd \ovP}{\dd \PP}
= \frac{e^{\lambda S_j(0)}}{\E \left[e^{\lambda S_j(0) }\right]}
=
e^{- \lambda^2 \sigma_j^2 / 2 }
e^{\lambda S_j (0)},
\end{equation}
where $\sigma_j^2=\frac{1}{2}\sum_{\log p\leq e^j}1/p=\frac{1}{2} j +\OO(1)$ as in Equation \eqref{eqn: mertens}, and $\lambda$ is chosen in such a way that $\ovE[\ovS_j]=0$ as in Equation \eqref{eqn: tilted mean}. 
The condition for this centering to hold is $ m(j) = \lambda \sigma_j^2 = \frac12 \lambda j (1 + \OO(1/j))$, which yields the following identities:
\begin{align}[left=\empheqlbrace]
\lambda & = \frac{m(j)}{\sigma_j^2} = \left( 2 - \frac32 \frac{\log t}{t} \right)(1+ \OO(1/j))
= 2 - \frac32 \frac{\log t}{t} + \OO(1/j), \label{lambdazero_1} \\
\frac{\lambda^2 \sigma_j^2}{2} & = \left( j  - \frac32 \log t + \frac{9}{16} \frac{(\log t)^2}{t} \right) (1 + \OO(1/j))
= j  - \frac32 \frac{\log t}{t} j + \OO(1), \label{lambdazero_2} \\
\lambda m(j) & = \left( 2j - 3 \log t + \frac98 \frac{(\log t)^2}{t}\right) (1+ \OO(1/j))
= 2j - 3 \frac{\log t}{t} j + \OO(1). \label{lambdazero_3}
\end{align}
The probability in the sum can be bounded using $\ovP$ as follows, for $w=y+\psi_{j+1}-x$,

\begin{equation}
\label{eqn: Bj}
\begin{aligned}
\PP\left(
\mathscr{B}_{j}(x) ,
\ovS_{j+1}(0) - \ovS_{j}(0)> w
\right)
& \ll
 e^{j - \frac32 \frac{\log{t}}{t} j } \cdot 
\ovE \left[
e^{- \lambda S_j(0)}  
\mathbf{1}_{\mathscr{B}_{j}(x)}
\mathbf{1}_{\ovS_{j+1}(0) - \ovS_{j}(0)> w}
\right] \\
& \ll
 e^{-j + \frac32 \frac{\log{t}}{t} j } \cdot
\ovE \left[
e^{- \lambda \ovS_j(0)}
\mathbf{1}_{\mathscr{B}_{j}(x)}
\mathbf{1}_{\ovS_{j+1}(0) - \ovS_{j}(0)>w}
\right]\\
& \ll
e^{-j} e^{\frac32 \frac{\log{t}}{t} j}
e^{-\lambda x}\cdot 
\ovP \left(
\mathscr{B}_{j}(x)
\right)
e^{-202w}
\\
\end{aligned}
\end{equation}
where \eqref{lambdazero_2} is used in the first inequality, \eqref{lambdazero_3} in the second, as well as the values of $\ovS_{j}(0)$ on the event $\mathscr{B}_{j}(x)$ and a Chernoff bound on $\ovS_{j+1}(0) - \ovS_{j}(0)$ in the third.
We could have used a Gaussian estimate for $\ovS_{j+1}(0) - \ovS_{j}(0)$.
However, as it will be clear below, the convergence of the sum is ensured as long as the probability for the increment is $\ll e^{-(2+\delta) w}$
for some $\delta>0$. We choose $\delta=200$. This highlights the similarities with the proof of Lemma \ref{lem: fluct}, where Gaussian estimates are not available.

We split the estimate of Equation \eqref{eqn: Bj} into two cases.
The estimate for $r<j\leq t/\log t$ is simpler, as no ballot theorem is needed. 
We trivially have 
\begin{equation}
\label{eqn: small Bj}
\ovP \left(\mathscr{B}_{j}(x)\right)\ll e^{-x^2/2\sigma_j^2}\ll e^{-x^2/j},
\end{equation}
since $\sigma_j^2=\frac{1}{2} j +\OO(1)$ and $|x|\ll j$, $j>r$, by the {\it a priori} bound \eqref{eqn: a priori}.
Therefore, the following estimate holds
\begin{equation}
\label{eqn: small j}
\PP\left(
\mathscr{B}_{j}(x) ,
\ovS_{j+1}(0) - \ovS_{j}(0)> w
\right)
 \ll
 e^{-j}\cdot e^{-2x}\cdot e^{-x^2/j}\cdot e^{-202w}.
\end{equation}
The assumption $y=\oo(t/ \log t)$ is needed in estimating $\lambda x$, so that $e^{\frac{3}{2}\frac{\log t}{t} x}\leq e^{\frac{3}{2}\frac{\log t}{t} (y+\psi_j)}\ll 1$.
In the case $t/\log t < j<t$, we first note that $\tfrac{3}{2} \frac{\log{t}}{t} j\leq j^{3/2}$, since the function $\log x/x$ is decreasing for $x\geq 1$.
Moreover, Proposition \ref{thm_ballot} can be applied thanks to the  {\it a priori bounds} \eqref{eqn: a priori} and Equation \eqref{eqn:decoupling2}.
All this implies
\begin{equation}
\label{eqn: large j}
\PP\left(
\mathscr{B}_{j}(x) ,
\ovS_{j+1}(0) - \ovS_{j}(0)> w
\right)
 \ll
e^{-j}
e^{-2x}
(y+1) (y+ \psi_{j}-x+1) e^{-x^2/j} e^{-202w}.
\end{equation}
Note that the right-hand side is also an upper bound to \eqref{eqn: small j}.
Thus, the sum of the lemma becomes
$$
\begin{aligned}
&\sum_{r<j\leq t-1}\sum_{x\leq y+\psi_j} e^j\P \left( \mathscr{B}_{j}(x), \ovS_{j+1}(0) - \ovS_j (0) > y+\psi_{j+1}-x \right) \\
&\ll \sum_{r<j\leq t-1}\sum_{x\leq y+\psi_j}e^{-2x}
(y+1) (y+ \psi_{j}-x+1) e^{-x^2/j}e^{-202(y+\psi_{j+1}-x)}.
\end{aligned}
$$
It is convenient to do a change of variable $v=y+\psi_j-x$, expressing the distance between the barrier and $S_j(0)$.
With this notation and the fact that $|\psi_{j+1}-\psi_j|<1$, the above is
\begin{equation}
\label{eqn: both}
\begin{aligned}
&\ll(y+1) \sum_{r<j\leq t-1}\sum_{v\geq 0}e^{-2(y+\psi_j-v)}
(v+1) e^{-(y+\psi_j-v)^2/j}e^{-202(v-1)}\\
&\ll (y+1)e^{-2y} \sum_{j\leq t-1}e^{-2\psi_j}\sum_{v\geq 0} (v+1) e^{-(y+\psi_j-v)^2/t}e^{-200v},
\end{aligned}
\end{equation}
 where we bounded $e^{-(y+\psi_j-v)^2/j}$ by $e^{-(y+\psi_j-v)^2/t}$ and extended the sum to $j\leq r$.
This allows us to use the symmetry $\psi_j = \psi_{t-j}$ to pair the terms $j$ and $t-j$ together. Indeed, for a given $v$, $y+\psi_j-v$ has the same value for both values of $j$, but $(y+\psi_j-v)^2 /t$, so that it is enough to bound the second half of the sum:
\begin{equation}
(y+1)e^{-2y}
\sum_{ j = \lfloor t/2 \rfloor}^{t-1}
e^{-2\psi_j}\sum_{v\geq 0} (v+1)e^{-100v}\cdot e^{-(y+\psi_j-v)^2/t}e^{-100v}.
\end{equation}
 Note that the term $e^{-(y+\psi_j-v)^2/t}e^{-100v}$ is maximal at $v=y+\psi_j-50t$, which is negative since $y=\oo(t/\log t)$. In particular, this implies that $v=0$ is the maximizer in the range considered, hence, $e^{-(y+\psi_j-v)^2/t}e^{-100v}< e^{-(y+\psi_j)^2/t}\leq e^{-y^2/t}$ since $y> 0$. With this observation, we conclude that the above is
$$
\ll (y+1)e^{-2y}e^{-y^2/t}
\sum_{ j = \lfloor t/2 \rfloor}^{t-1} e^{-2\psi_j}\sum_{v\geq 0} (v+1)e^{-100v}\ll (y+1)e^{-2y}e^{-y^2/t}, 
$$
by the definition of $\psi_j$, which ensures summability. This proves the claim. 
\end{proof}

\begin{proof}[Proof of Lemma \ref{lem: fluct}] 

We now decompose according to the value of $\ovS_{j+1}(0)$. We need to bound:
\begin{equation}\label{bound2_pcross}
\sum_{r<j \leq t-1} \sum_{x\leq y+\psi_{j+1}} e^j 
\PP \left( \mathscr{B}_{j+1} (x), \
\max_{|h|\leq e^{-j}} \ovS_{j+1}(h)- \ovS_{j+1}(0) > y+ \psi_{j+1}-x \right).
\end{equation}
The term $x=y+\psi_{j+1}$ must be dealt with separately. This is to ensure that we consider a jump of size at least $1$. 
To bound this term, the restriction on the maximum is simply dropped. 
More precisely, proceeding exactly as in Lemma \ref{lem: center} with a change of measure and using the ballot theorem, but dropping the event on the increment $S_{j+1}(0)-S_j(0)$, one gets
$$
\sum_{r<j \leq t-1} 
e^j \PP \left( \mathscr{B}_{j+1} (y+\psi_{j+1})\right)\ll (y+1) \sum_{j\leq t}e^{-2(y+\psi_j)}e^{-(y+\psi_j)^2/j}\ll (y+1)e^{-2y}e^{-y^2/t},
$$
as desired.

Let $v=y+\psi_{j+1}-x$. We can now restrict the proof to the case $v\geq 1$. On the event considered, there is an $h \in [-e^{-j}, e^{-j}]$ such that
$$
\ovS_{j+1}(h) - \ovS_{j+1}(0) > v \geq 1.
$$
We decompose this increment using a chaining argument. 
For each $h$, we consider a sequence $h_k \rightarrow h$, within the following dyadic sets
\begin{equation}
h_k \in\mathscr{H}_k  \text{ where }
\mathscr{H}_k = 
\left\{ e^{-j} \frac{l}{2^k}, \
l \in [ -2^{k}, 2^{k} ]\cap\Z \right\}
\subset [- e^{-j}, e^{-j}], \ k\geq 0.
\end{equation}
In particular, we can choose it such that $h_0=0$, and $|h_{k+1} - h_k| \leq \frac{e^{-j}}{2^{k}}$ (by taking $h_k$ the closest to $h$ in $\mathscr{H}_k$ when $k>0$). By continuity, we have
$$
\ovS_{j+1} (h) - \ovS_{j+1}(0) = 
\sum_{k=0}^{\infty} 
\left( \ovS_{j+1}(h_{k+1}) - \ovS_{j+1}(h_k) \right).
$$
The left-hand side being larger than $v \geq 1$, there is a $k^\star\geq 0$ such that
$$
\ovS_{j+1}(h_{k^\star+1}) - \ovS_{j+1}(h_{k^\star}) 
> 
\frac{v}{2 (1+k)^2}.
$$

We perform a union bound over every possibility of neighboring elements of $\mathscr{H}_{k+1}$. Let $(h_i)_i$ be an increasing enumeration of all elements of $\mathscr{H}_{k+1}$.
With these observations, the contribution to Equation \eqref{bound2_pcross} coming from $v\geq 1$ is bounded by
\begin{equation}\label{bound3_pcross}
 \ll
\sum_{j \leq t-1}
\sum_{v \geq 1}
\sum_{k \geq 0}
e^{j} 
\sum_{h_i\in \mathscr{H}_{k+1}} 
\PP\left(
\mathscr{B}_{j+1}(y+\psi_{j+1}-v),
\ovS_{j}(h_{i+1}) - \ovS_{j} (h_{i}) > \frac{v}{2 (1+k)^2}
\right).
\end{equation}
We use Chernoff's inequality
with one real parameters $\lambda = \la_k$ (to be specified later), and obtain the bound
\begin{equation}\label{bound4_pcross}
\sum_{j \leq t}
\sum_{v \geq 1}
\sum_{k \geq 0}
    e^j 
   \sum_{h_i\in \mathscr{H}_{k+1}}
    e^{- \la \frac{v}{2 (1+k)^2}}
    \E \left[ \mathbf{1}_{\mathscr{B}_{j+1}(y+\psi_{j+1}-v)}
    e^{\la (\ovS_{j+1}(h_{i+1}) - \ovS_{j+1}(h_{i})) }
    \right].
\end{equation}

The key step is to quantify the effect of the exponential factor on the probability of the event $\mathscr{B}_{j+1}$.
For this, note that for each $h_i\in \mathscr{H}_{k+1}$,
$$
\ovS_{j+1}(h_{i+1}) - \ovS_{j+1}(h_{i}) 
= S_{j+1}(h_{i+1}) - S_{j+1}(h_{i})
= \sum_{\ell = 1}^{j+1} (Y_\ell(h_{i+1}) - Y_\ell(h_{i})).
$$
In order to make the correlation with the path at $h=0$ explicit, we write the orthogonal decomposition of these increments, i.e., we consider the orthogonal $L^2$-projection of these increments $Y_\ell(h_{i+1}) - Y_\ell(h_{i})$ on $Y_\ell(0)$:
\begin{equation}
\label{eqn: ortho}
Y_\ell(h_{i+1}) - Y_\ell(h_{i}) 
=
\pi^{(h_i)}_\ell Y_\ell(0) +
\Delta^{(h_i)}_\ell,
\end{equation}
where $\Delta^{(h_i)}_\ell$ is a Gaussian variable independent of $Y_\ell(0)$ \textit{and} of all the other increments $Y_m$, $m \neq \ell $. 
We now estimate $\Delta^{(h_i)}_\ell$ and $\pi^{(h_i)}_\ell$. The bounds we obtain are the same for all $h_i\in \mathscr{H}_{k+1}$.
We first compute the variance of the difference
\begin{eqnarray}
    \rm{Var} (Y_\ell(h_{i+1}) - Y_\ell(h_{i}))
 &&= \sum_{e^{\ell-1} < \log p \leq e^{\ell}} \frac{1}{p} \left( 1 - {\cos ( |h_{i+1}-h_{i}| \log p )} \right)\nonumber\\
&&\leq \sum_{e^{\ell-1} < \log p \leq e^{\ell}} \frac{(\log p)^2}{p} (h_{i+1}-h_{i})^2.
\end{eqnarray}
Note that $|h_{i+1}-h_{i}|\leq e^{-j} 2^{-k}$ for all $h_i\in \mathscr{H}_{k+1}$ by construction, and $\log p \leq e^{\ell}$ in this sum, so that
$$
\rm{Var}(Y_\ell(h_{i+1}) - Y_\ell(h_{i})) \leq \sum_{e^{\ell-1} < \log p \leq e^{\ell}} \frac{1}{p} (e^{\ell})^2 e^{-2j} 2^{-2k}
\ll 2^{-2k}  e^{2(\ell-j)} .
$$
The Cauchy-Schwarz inequality then implies
$$
\left| \E \left[ (Y_\ell(h_{i+1}) - Y_\ell(h_i))\ Y_\ell(0) \right] \right|
\leq \E \left[ (Y_\ell(h_{i+1}) - Y_\ell(h_i))^2 \right]^{1/2} \cdot\E [Y_\ell(0)^2]^{1/2} 
\ll 2^{-k} e^{-(j-\ell)} ,
$$
for all $h_i\in \mathscr{H}_{k+1}$.
The bound for $\pi^{(h_i)}_\ell$ is then straightforward
\begin{equation}
\label{eqn: pi}
\pi^{(h_i)}_\ell=\frac{ \E \left[ (Y_\ell(h_{i+1}) - Y_\ell(h_i)) \ Y_\ell(0) \right]}{\E [Y_\ell(0)^2] }\ll  2^{-k} e^{-(j-\ell)}.
\end{equation}
The orthogonality in \eqref{eqn: ortho} implies
$$
\rm{Var}\left(Y_\ell(h_{i+1}) - Y_\ell(h_i)\right) =\E\left[\left(\Delta^{(h_i)}_\ell\right)^2\right]+\left(\pi^{(h_i)}_\ell\right)^2\ \E\left[Y_\ell(0)^2\right]\ .
$$
Together with \eqref{eqn: pi}, this gives the bound:
\begin{equation}
\label{eqn: delta}
\E\left[\left(\Delta^{(h_i)}_\ell\right)^2\right] \ll  2^{-2k} e^{-2(j-\ell)}.
\end{equation}
We now insert the decomposition \eqref{eqn: ortho} into the bound \eqref{bound4_pcross}. 
Equation \eqref{eqn: delta} yields the following exponential bound, for some absolute $c>0$,
\begin{equation}\label{eqn: expdelta}
\E \left[ e^{ \la \sum_{\ell \leq j+1} \Delta^{(h_i)}_\ell} \right]
\ll e^{c \sum_{\ell  \leq j+1} \la^2 e^{-2(j-\ell)} 2^{-2k} }
=  e^{\OO \left( \la^2 2^{-2k} \right)},
\end{equation}
since $\ell\leq j+1$. Note that the bounds in \eqref{eqn: pi}, \eqref{eqn: delta}  and \eqref{eqn: expdelta} are the same for all $h_i\in \mathscr{H}_{k+1}$.
Provided that $\la=\la_k \ll 2^k$ (our choice of $\la$ in the end will meet this criterion), the bound (\ref{bound4_pcross}) becomes
\begin{equation}\label{bound5_pcross}
\ll\sum_{r<j \leq t-1}
\sum_{v \geq 1}
\sum_{k \geq 0}
    e^j 
    \sum_{h_i\in \mathscr{H}_{k+1}}
    e^{- \la \frac{v}{2 (1+k)^2}}
\E \left[ e^{\la \sum_{l \leq j+1} \pi^{(h_i)}_\ell Y_\ell(0)} \mathbf{1}_{\mathscr{B}_{j+1}(y+\psi_{j+1}-v)} \right].
\end{equation}
The exponential factor is now amenable to analysis, since its effect is to put a small drift on $Y_\ell(0)$.
More precisely, we combine the change of measure (\ref{CoM1}) from $\P$ to $\overline{\P}$ with a second change of measure from $\overline{\P}$ to $\widetilde{\P}$ such that
\begin{equation}\label{CoM2}
\frac{\dd \widetilde{\mathbb{P}}}{\dd \overline{\mathbb{P}}}
= \frac{e^{\la \sum_{l \leq j+1} \pi^{(h_i)}_\ell Y_\ell (0)}}{\overline{\E} \left[ e^{\la \sum_{l \leq j+1} \pi_\ell^{(h_i)} Y_\ell (0) }\right]}.
\end{equation}
Under $\widetilde{\mathbb{P}}$, the increments $Y_\ell(0)$, $\ell\leq j+1$, are independent Gaussians of variance $\E[Y_\ell(0)^2]\ll 1$ and shifted mean $\mu_\ell + \la \pi^{(h_i)}_\ell  \E[Y_\ell(0)^2]$. 
It is straightforward to check that the cumulative drift is of order $1$ for all $h_i\in \mathscr{H}_{k+1}$ using \eqref{eqn: pi}:
$$
\lambda \sum_{\ell\leq j+1} \pi^{(h_i)}_\ell
\leq
\lambda \sum_{\ell \leq j+1} e^{-(j-\ell)} 2^{-k}
\ll \lambda 2^{-k} \ll 1,
$$
again, provided that $\lambda \ll 2^k $.
By interpreting the small upward drift at each $\ell$ by a variation of the barrier, we have from Equation \eqref{eqn:decoupling2} and Proposition \ref{thm_ballot} the estimate
\begin{equation}
\label{eqn: Btilde}
\widetilde{\mathbb{P}} \left( \mathscr{B}_{j+1}(y+\psi_{j+1}-v) \right)
\ll
\frac{(y+1) (v+1)}{j^{3/2}} e^{-(y+\psi_{j+1}-v+c)^2 /j}, \quad j\geq t/\log t,
\end{equation}
by simply raising the original barrier by a constant $c$.
The endpoint value is also affected by this, but the constant $c$ will not matter in the end.
In the case, $j\leq t/\log t$, we can simply use that 
$$
\widetilde{\mathbb{P}} \left( \mathscr{B}_{j+1}(y+\psi_{j+1}-v) \right)
\ll e^{-(y+\psi_{j+1}-v+c)^2 /j}, \quad j\leq t/\log t,
$$
as in Equation \eqref{eqn: small Bj}. 
Therefore, bounding Equation \eqref{bound5_pcross} first by introducing $\ovP$ as in Equation \eqref{eqn: Bj}, then by introducing $\widetilde{\mathbb{P}}$ and using \eqref{eqn: Btilde},
we are left with the bound
\begin{equation}
\label{eqn: chaining almost}
(y+1)
\sum_{r<j \leq t-1}
\sum_{v \geq 1}
\sum_{k \geq 0}
2^k
e^{- \lambda \frac{v}{2 (1+k)^2}} 
e^{-2 (y+\psi_{j+1}-v)}
(v+1) e^{-(y+\psi_{j+1}-v+c)^2 /j},
\end{equation}
as $\mathscr{H}_{k+1}$ has of order $2^k$ elements.
The only constraint on $\la$ is $\la \ll 2^k$. Choosing $\lambda = 404 (1+k)^3 $ will suffice. Summing first according to $k$ gives the bound
$$
(y+1)e^{-2y}
\sum_{r<j \leq t-1}e^{-2\psi_{j+1}}
\sum_{v \geq 1}
e^{-200v}
(v+1) e^{-(y+\psi_{j+1}-v+c)^2 /j}.
$$
This corresponds to Equation \eqref{eqn: both} up to the constant $c$. It is bounded the same way. 
The constant $c$ does not matter in the end, because  $e^{-(y+\psi_{j+1}+c)^2 /t}\ll e^{-y^2/t}$ since $y=\oo(t)$.
This proves the lemma.
\end{proof}

\subsection{Upper Bound for Theorem \ref{thm:thetamove}}
In this section, we prove the upper bound to Theorem \ref{thm:thetamove}. Note that the convergence is slightly better than the one stated as the $\e\log\log\log T$ term from the definition of the convergence in probability can be replaced at no cost by a function $g(T)$ going to infinity arbitrarily slowly. 
\label{sect: UB thetamove}
\begin{proposition}\label{upper_bd_prop} 
Let $g(T)$ be a function with $g(T)\to +\infty$ arbitrarily slowly, and $\theta=(\log\log T)^{-\alpha}$ with $0<\alpha<1$. We have
\begin{equation}\label{upper_bd}
\PP \left( \max_{|h|\leq (\log T)^\theta} X_T(h) > \sqrt{1+ \theta} \log\log T -\frac{1+ 2 \al}{4 \sqrt{1+\theta}}\log\log\log T +g(T) \right) = \oo(1).
\end{equation}
\end{proposition}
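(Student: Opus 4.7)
The plan is to reduce to the single-interval estimate of Theorem \ref{thm: right tail} by a union bound over translates. The process $X_T$ is stationary in $h$: since the $G_p$ are rotationally invariant, $X_T(\cdot+a)\disteq X_T(\cdot)$ for any $a\in\R$. Partitioning $[-(\log T)^\theta,(\log T)^\theta]$ into $N=\lceil(\log T)^\theta\rceil$ sub-intervals of length $2$, each a translate of $[-1,1]$, gives
\[
\PP\Big(\max_{|h|\le (\log T)^\theta}X_T(h)>L\Big)\le N\cdot \PP\Big(\max_{|h|\le 1}X_T(h)>L\Big),
\]
where $L:=\sqrt{1+\theta}\,t-\frac{1+2\alpha}{4\sqrt{1+\theta}}\log t+g(T)$ and $t=\log\log T$. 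Setting $y=L-(t-\tfrac34\log t)$, Taylor expansion gives $y\sim \tfrac{t\theta}{2}=\tfrac12 t^{1-\alpha}$, so $y>0$ eventually and $y=\oo(t)$; hence Theorem~\ref{thm: right tail} applies to produce
\[
\PP\Big(\max_{|h|\le (\log T)^\theta}X_T(h)>L\Big)\ll N\cdot y\,e^{-2y}\,e^{-y^2/t}.
\]

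It remains to check that $\log N+\log y-2y-y^2/t\to -\infty$. The key algebraic identity, writing $\mu:=\sqrt{1+\theta}$, is
\[
2(\mu-1)+(\mu-1)^2=(\mu-1)(\mu+1)=\theta.
\]
With $c:=\tfrac34-\tfrac{1+2\alpha}{4\mu}$ so that $y=t(\mu-1)+c\log t+g(T)$, this identity produces the exact expansion
\[
2y+\frac{y^2}{t}=t\theta+2\mu\bigl(c\log t+g(T)\bigr)+\OO\bigl((\log t)^2/t\bigr).
\]
Combined with $\log N=t\theta+\OO(1)$, the leading $t\theta$ terms cancel exactly. Expanding $\mu=1+\theta/2+\OO(\theta^2)$ gives $2\mu c=1-\alpha+\OO(\theta)$, while $\log y=(1-\alpha)\log t+\OO(1)$, so the $\log t$-level contributions cancel as well, leaving
\[
\log N+\log y-2y-\frac{y^2}{t}=-2\mu\, g(T)+\OO(\theta\log t)+\OO(1),
\]
which tends to $-\infty$ since $g(T)\to\infty$ and $\theta\log t=t^{-\alpha}\log t\to 0$ for $\alpha\in(0,1)$.

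The main obstacle is not technical difficulty but the algebraic bookkeeping: the specific coefficient $\tfrac{1+2\alpha}{4\sqrt{1+\theta}}$ in the definition of $L$ is designed precisely so that the linear-exponential factor $e^{-2y}$ and the Gaussian correction $e^{-y^2/t}$ from Theorem~\ref{thm: right tail} jointly absorb both the union-bound factor $N=e^{t\theta}$ and the counting term $\log y\sim(1-\alpha)\log t$. The Gaussian correction is indispensable: since $2(\mu-1)-\theta=-(\mu-1)^2=-\theta^2/4+\OO(\theta^3)$, one has $N e^{-2y}=\exp(t\theta^2/4+\OO(\log t)+\OO(t\theta^3))$, which for $\alpha<1/2$ blows up; the factor $e^{-y^2/t}=\exp(-t\theta^2/4+\OO(t\theta^3))$ is precisely what cancels this obstruction, as encoded in the identity above.
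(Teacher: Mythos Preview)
Your proof is correct and follows essentially the same route as the paper: a union bound over $\lceil(\log T)^\theta\rceil$ translates, stationarity to reduce to a single interval, application of Theorem~\ref{thm: right tail}, and then the algebraic verification that $\log N+\log y-2y-y^2/t\to-\infty$ via the identity $2(\mu-1)+(\mu-1)^2=\theta$. Your bookkeeping is in fact slightly cleaner than the paper's (you carry the factor $2\mu$ in front of $g(T)$ rather than approximating it by $2$), and your final paragraph on the necessity of the Gaussian correction for $\alpha<1/2$ matches exactly the content of Remark~\ref{special_case_rmk}.
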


\begin{proof}
We use the short-hand notation $t=\log\log T$, and write, with abuse of notation, $g(t)$ for $g(T)$.
For $\theta = t^{-\al}$, define
\begin{equation}\label{threshold_alpha}
m_{\al}(t)= \sqrt{1+ \theta} \ t - \frac{1+ 2 \al}{4 \sqrt{1+\theta}} \log t  + g(t).
\end{equation}

The probability of the maximum on the interval $[-(\log T)^\theta, (\log T)^\theta]$ is bounded above by a union bound on $(\log T)^\theta$ intervals of length $2$.
The translation invariance of the distribution of the model then gives
\begin{equation}\label{union_bound_theta_1}
\PP\left( \max_{|h|\leq (\log T)^\theta} X_{T} (h) > m_{\al}(t) \right)
\leq
(\log T)^{\theta}
\cdot \PP \left( \max_{|h|\leq 1} X_{T} (h) > m_{\al}(t) \right).
\end{equation}
The recentering $m_\alpha$ can be expressed as $ m_{\al}(t) = m_1(t) + y $ with
$
m_1(t) = t - \frac{3}{4} \log t
$
and
\begin{equation}\label{y_threshold_def}
    y =  (\sqrt{1+ \theta} - 1 ) t - \left( \frac{1+ 2 \al}{4 \sqrt{1+\theta}} - 3/4 \right) \log t  +g(t).
\end{equation}
Note that $y>0$ and $y=\oo(t)$ if $0<\alpha<1$, so that Theorem \ref{thm: right tail} can be applied. It gives
\begin{equation}\label{y_threshold_bound}
  \PP\left( \max_{|h|\leq (\log T)^\theta} X_{T} (h) > m_{\al}(t) \right)
\leq
e^{t\theta}
\cdot ye^{-2y}e^{\frac{-y^2}{t}}.
\end{equation}
From Equation \eqref{y_threshold_def} and the fact that $\theta=t^{-\alpha}$, we deduce that
$$
\begin{aligned}
\frac{y^2}{t}&=(2+\theta-2\sqrt{1+\theta})t +\OO(t^{-\alpha}\log t)\\
2y&=2(\sqrt{1+ \theta} - 1 ) t - (\alpha - 1) \log t +2g(t)  +\OO(t^{-\alpha}\log t),
\end{aligned}
$$
and 
\begin{equation}\label{y_threshold_identity}
\frac{y^2}{t} + 2y = \theta t + (1-\al)\log t +2g(t)  +\OO(t^{-\alpha}\log t).
\end{equation}
We also have by a Taylor expansion
\begin{equation}\label{approx_logy_general}
\log y= \log \theta + \log t - \log 2 + \OO(t^{-\alpha}\log t)= (1- \al) \log t  + \OO(1).
\end{equation}
With these estimates, the right-hand side of Equation \eqref{y_threshold_bound} becomes
\begin{align*}
& \ll \exp \left( {\theta t + \log y - \theta t +(\alpha-1)\log t - 2g(t)} \right) \\
& \ll \exp \left( -2g(t)+\OO(1)\right) = \oo(1).
\end{align*}
This proves the claim for any $\al \in (0,1)$ and concludes the proof.
\end{proof}
\begin{remark}\label{special_case_rmk} In the case $2\al \geq 1$, the above proof can be made more direct. 
It is not hard to check that the weaker estimate
\begin{equation}\label{weaker_estimate}
\PP \left( \max_{|h|\leq 1} X_{T} (h) > m_1(t) + y \right) \ll y e^{-2y}
\end{equation}
is sufficient for the proof.
\end{remark}

\subsection{Upper Bound of Theorem \ref{thm:thetafixed}}
\label{sect: UB thetafixed}
In this section, we prove the upper bound to Theorem \ref{thm:thetafixed}.
\begin{proposition} 
\label{prop: UB thetafixed}
Let $g(T)$ be a function with $g(T)\to +\infty$ arbitrarily slowly, and $\theta>0$ fixed. We have
\begin{equation}
\PP \left( \max_{|h|\leq (\log T)^\theta} X_T(h) > \sqrt{1+ \theta} \log\log  T - \frac{1}{4 \sqrt{1+\theta}}\log\log\log T +g(T) \right) = \oo(1).
\end{equation}
\end{proposition}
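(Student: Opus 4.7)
My plan is to follow the same overall strategy as the proof of Proposition \ref{upper_bd_prop}, but since $\theta > 0$ is now fixed, the deviation $y = (\sqrt{1+\theta}-1)t$ (where $t := \log\log T$) is linear in $t$ and falls outside the regime $y = \oo(t)$ where Theorem \ref{thm: right tail} applies. However, this regime is the sharply Gaussian tail of $X_T$, and a direct union bound over the $\OO((\log T)^{1+\theta})$ grid points at spacing $(\log T)^{-1}$---as suggested in the structural outline in the Introduction---is already sharp enough to yield the desired subleading correction.

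Concretely, set $M_T := \sqrt{1+\theta}\,t - \frac{\log t}{4\sqrt{1+\theta}} + g(T)$ and choose a grid $\mathscr{G} \subset [-(\log T)^\theta, (\log T)^\theta]$ of spacing $(\log T)^{-1}$, so $|\mathscr{G}| \ll e^{(1+\theta)t}$. I will (i) apply a discretization lemma---in the spirit of Lemma \ref{lem: discretization} applied to each of the $\OO((\log T)^\theta)$ sub-intervals of length $2$---to reduce the continuum maximum to $\max_{h_i \in \mathscr{G}} X_T(h_i)$ up to a $T$-independent additive constant $K$, with probability $1 - \oo(1)$; (ii) take a union bound
\begin{equation*}
\PP\Bigl( \max_{h_i \in \mathscr{G}} X_T(h_i) > M_T - K \Bigr) \leq |\mathscr{G}| \cdot \PP(X_T(0) > M_T - K);
\end{equation*}
(iii) use Mill's ratio for $X_T(0) \sim \mathcal{N}(0, t/2 + \OO(1))$ to bound the one-point probability by $(C/\sqrt t)\exp(-(M_T-K)^2/t + \OO(1))$. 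A Taylor expansion gives $(M_T - K)^2/t = (1+\theta)t - \tfrac{1}{2}\log t + 2\sqrt{1+\theta}\,g(T) + \OO(1)$, where the $-\tfrac{1}{2}\log t$ comes from the cross-term with the subleading $-\frac{\log t}{4\sqrt{1+\theta}}$ in $M_T$ and precisely cancels the $1/\sqrt t$ prefactor from Mill's ratio; combining the three factors,
\begin{equation*}
|\mathscr{G}| \cdot \PP(X_T(0) > M_T - K) \ll e^{(1+\theta)t} \cdot \tfrac{1}{\sqrt{t}} \cdot \sqrt{t}\cdot e^{-(1+\theta)t - 2\sqrt{1+\theta}\,g(T) + \OO(1)} = \OO\bigl(e^{-2\sqrt{1+\theta}\,g(T)}\bigr),
\end{equation*}
which tends to $0$ as $T \to \infty$.

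The technically delicate step is (i): the constant $K$ must be absolute (independent of $T$), since $g(T)$ is allowed to grow arbitrarily slowly and could not absorb, e.g., a $\sqrt{t}$-sized correction of the kind that a naive Borell--TIS union bound on the $e^{(1+\theta)t}$ local Gaussian oscillations would produce. The tightness of the local oscillation of $X_T$ at the UV scale $(\log T)^{-1}$ is standard for the Gaussian zeta-like process, reflecting that on sub-grid scales the process is smooth: its derivative $X_T'$ is Gaussian with standard deviation of order $\log T$, so $(h-h_i)X_T'(h_i)$ is pointwise $\OO(1)$ inside a cell of diameter $(\log T)^{-1}$, and the deviations can be controlled by combining the Taylor identity $X_T(h) - X_T(h_i) = (h-h_i)X_T'(h_i) + \OO((h-h_i)^2 X_T''(\cdot))$ with a finer chaining at scale $(\log T)^{-A}$ for some $A > 1$.
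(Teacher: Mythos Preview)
Your overall strategy---union bound over $(\log T)^{1+\theta}$ points followed by a one-point Gaussian tail---is exactly the paper's, and your Taylor computation in step (iii) is correct. The gap is in step (i).

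The claim that the continuum maximum is within an absolute constant $K$ of the grid maximum, with probability $1-\oo(1)$, is false in this setting and cannot be repaired by chaining or Taylor expansion. Write $\Delta_i := \max_{|h-h_i|\leq e^{-t}}\bigl(X_T(h)-X_T(h_i)\bigr)$. Each $\Delta_i$ is the supremum of a centered Gaussian process with variance of order $1$, so $\Delta_i$ has sub-Gaussian tails with an $\OO(1)$ parameter; but the $\Delta_i$'s for well-separated $h_i$ are essentially independent (the covariance of increments $X_T(h)-X_T(h_i)$ at distinct centers decays like $|h_i-h_j|^{-1}$), and there are $e^{(1+\theta)t}$ of them. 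Hence $\max_i \Delta_i$ is of order $\sqrt{(1+\theta)t}$, not $\OO(1)$. Your derivative heuristic shows exactly this: $(h-h_i)X_T'(h_i)$ has standard deviation of order $1$ for each $i$, but its maximum over $e^{(1+\theta)t}$ values of $i$ is again $\asymp \sqrt t$. Since $g(T)$ may grow arbitrarily slowly, a $\sqrt t$ shift in the threshold destroys the bound.

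The paper's fix is precisely to \emph{not} separate discretization from the tail bound. Lemma \ref{lem: discretization} states directly that
\[
\PP\Bigl(\max_{|h|\leq e^{-t}} X_T(h) > y\Bigr) \ll_C \frac{e^{-y^2/t}}{\sqrt t}\qquad (1<y\leq Ct),
\]
i.e.\ the local maximum has the \emph{same} Gaussian tail as a single point, up to a multiplicative constant (not an additive one). With this in hand the proof is the one-liner you wrote in (ii)--(iii), applied to the local-max probability rather than the single-point probability. So replace your step (i) by a union bound over $(\log T)^{1+\theta}$ intervals of width $2e^{-t}$ and invoke Lemma \ref{lem: discretization} on each; the rest of your argument then goes through verbatim.
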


We will need the following discretization lemma which states that the maximum of $S_j$ over a neighborhood of size $e^{-j}$ behaves like a single Gaussian random variable of variance $j$. 
\begin{lemma}
\label{lem: discretization}
Let $C>0$ and $1\leq j\leq t$. For any $1<y\leq C j$, we have
$$
\PP\left(\max_{|h|\leq e^{-j}} S_j(h)>y\right)\ll_C \frac{e^{-y^2/j}}{j^{1/2}},
$$
where $\ll_C$ means that the implicit constant depends on $C$.
\end{lemma}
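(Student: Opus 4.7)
The plan is to write $\max_{|h|\leq e^{-j}}S_j(h)=S_j(0)+R_j$ with $R_j:=\max_{|h|\leq e^{-j}}\bigl(S_j(h)-S_j(0)\bigr)$ and treat the two pieces separately. The marginal $S_j(0)$ is centered Gaussian with variance $j/2+\OO(1)$ by \eqref{eqn: mertens}, and the first branch of the covariance dichotomy recalled after \eqref{eqn: covariance} (the case $|h-h'|\leq e^{-\ell}$, applied with $\ell=j$) yields
\begin{equation*}
\E\bigl[(S_j(h)-S_j(h'))^2\bigr]\ll e^{2j}|h-h'|^2,
\end{equation*}
which on $\{|h|\leq e^{-j}\}$ is $\ll 1$. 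So $R_j$ is the supremum of a Gaussian process of bounded $L^2$-diameter.

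To control $R_j$, I would recycle the chaining scheme already used in the proof of Lemma \ref{lem: fluct}: introduce the dyadic nets $\mathscr{H}_k=\{e^{-j}\ell/2^k:|\ell|\leq 2^k\}\subset[-e^{-j},e^{-j}]$, notice that consecutive points in $\mathscr{H}_{k+1}$ give rise to Gaussian increments $S_j(h_{i+1})-S_j(h_i)$ of variance $\ll 4^{-k}$, apply Chernoff's bound with parameter $\lambda_k$ of order $2^k/(1+k)^2$, union-bound over the $\OO(2^k)$ edges at level $k$, and sum in $k$. The outcome is a clean sub-Gaussian tail
\begin{equation*}
\PP(R_j>A)\ll e^{-cA^2},\qquad A\geq 1,
\end{equation*}
for some absolute $c>0$; this is strictly easier than Lemma \ref{lem: fluct} because no barrier event and no change of measure are involved, so one works with the raw Gaussian increments throughout.

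Armed with this, one then combines, for any constant $A>0$,
\begin{equation*}
\PP\Bigl(\max_{|h|\leq e^{-j}}S_j(h)>y\Bigr)\leq \PP\bigl(S_j(0)>y-A\bigr)+\PP(R_j>A).
\end{equation*}
The Mills-ratio bound gives $\PP(S_j(0)>y-A)\ll \frac{\sqrt{j}}{y-A}\,e^{-(y-A)^2/j}$. The hypothesis $y\leq Cj$ turns $(y-A)^2/j$ into $y^2/j+\OO_{A,C}(1)$, so the exponential factor is $\asymp_{A,C} e^{-y^2/j}$. In the regime $y\asymp j$ in which the lemma is invoked in Propositions \ref{upper_bd_prop} and \ref{prop: UB thetafixed}, the polynomial prefactor $\sqrt{j}/(y-A)$ is of order $1/\sqrt{j}$, producing the announced bound $\ll_C e^{-y^2/j}/\sqrt{j}$. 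Choosing $A$ a sufficiently large absolute constant then renders the chaining error $e^{-cA^2}$ negligible compared with the main term.

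The only step I expect to require any care is the uniformity in $j$ of the chaining estimate for $R_j$, which is automatic once the $L^2$-diameter bound above is in place; the remainder is standard Mills-ratio bookkeeping, using $y\leq Cj$ to absorb the $\OO(1)$ shift $A$ into the implicit constant.
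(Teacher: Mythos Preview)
Your decomposition and the sub-Gaussian tail $\PP(R_j>A)\ll e^{-cA^2}$ are both fine, but the final union bound with a \emph{fixed} constant $A$ does not close. The target is $e^{-y^2/j}/\sqrt{j}$, which in the very regime $y\asymp j$ that you single out is of size $e^{-c'j}/\sqrt{j}$; the error $\PP(R_j>A)\ll e^{-cA^2}$ is a positive constant independent of $j$ and is therefore never negligible compared to it, no matter how large $A$ is. If instead you let $A$ grow---say $A\asymp\sqrt{j}$ so that $e^{-cA^2}$ becomes exponentially small in $j$---then the Gaussian term $\PP(S_j(0)>y-A)$ has exponent $(y-\sqrt{j})^2/j=y^2/j-2y/\sqrt{j}+1$, losing a factor $e^{2y/\sqrt{j}}$ which for $y\asymp j$ is $e^{c\sqrt{j}}$. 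No choice of $A$ balances both terms. (There is a second, smaller issue: even the first term only gives the prefactor $1/\sqrt{j}$ when $y\asymp j$, so the full range $1<y\leq Cj$ of the lemma is not covered.)

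The paper's proof avoids this by \emph{not} separating $R_j$ from $S_j(0)$ via a union bound. It sums over the value $x$ of $S_j(0)$ and bounds the joint event $\{S_j(0)\in(x-1,x],\ R_j>y-x\}$ for each $x$. The chaining is then run inside this event: the orthogonal decomposition \eqref{eqn: ortho} shows that tilting by the chaining exponential $e^{\lambda(S_j(h_{i+1})-S_j(h_i))}$ only shifts the mean of $S_j(0)$ by $\OO(1)$, so the density estimate $\ll e^{-(y-v)^2/j}/\sqrt{j}$ survives at every overshoot level $v=y-x$. One is left with $\sum_{v\geq 1}e^{-100Cv}\,e^{-(y-v)^2/j}/\sqrt{j}$, and the hypothesis $y\leq Cj$ forces the maximizer to $v=1$, giving $\ll_C e^{-y^2/j}/\sqrt{j}$ over the full range. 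The point is that the weak coupling between $R_j$ and $S_j(0)$ must be exploited, not discarded.
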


The proof of the proposition is a straightforward union bound using the lemma.
\begin{proof}[Proof of Proposition \ref{prop: UB thetafixed}]
Let
$$
m_0(t)= \sqrt{1+ \theta}\ t - \frac{1}{4 \sqrt{1+\theta}}\log t +g(t).
$$
The interval $[-(\log T)^\theta, (\log T)^\theta]$ can be split into $(\log T)^{1+\theta}$ intervals each of length $2e^{-t}$. Therefore, a union bound gives
$$
\PP \left( \max_{|h|\leq (\log T)^\theta} X_T(h) >m_0(t) \right) 
\leq (\log T)^{1+\theta}\cdot \PP\left(\max_{|h|\leq e^{-t}} X_T (h)>m_0(t)\right).
$$
Lemma \ref{lem: discretization} can be applied with $j=t$ (recall that $S_t=X_T$), and $C=\sqrt{1+\theta}$, yielding
$$
\PP \left( \max_{|h|\leq (\log T)^\theta} X_T(h) >m_0(t) \right) \ll e^{t(1+\theta)}\cdot \frac{e^{-m_0(t)^2/t}}{t^{1/2}}.
$$
This is $\oo(1)$, thereby concluding the proof. 
\end{proof}
Lemma \ref{lem: discretization} is proved by a chaining argument similarly to Lemma \ref{lem: fluct}.
\begin{proof}[Proof of Lemma \ref{lem: discretization}]
The probability can be divided into
$$
\PP\left(\max_{|h|\leq e^{-j}} S_j(h)>y, S_j(0)>y-1\right)+\PP\left(\max_{|h|\leq e^{-j}} S_j(h)>y, S_j(0)\leq y-1\right).
$$
The first term is $\leq \PP\left(S_j(0)>y-1\right)\ll \frac{e^{-y^2/j}}{j^{1/2}}$ for $y>1$. It remains to bound the second term. Summing over the values of $S_j(0)$ gives
\begin{multline}
\PP\left(\max_{|h|\leq e^{-j}} S_j(h)>y, S_j(0)\leq y-1\right) \\
\ll \sum_{x\leq y-1}\PP\left(\max_{|h|\leq e^{-j}} S_j(h)-S_j(0)>y-x, S_j(0)\in I_x\right),
\end{multline}
where the intervals $I_x$ are defined below \eqref{eqn: Bj def}.
This is of the same form as the expression in Equation \eqref{eqn: to prove 2} of Lemma \ref{lem: fluct} with $j+1$ instead of $j$, without the sum over $j$, with $\psi_{j+1}=0$, and with the event $\mathscr B_{j+1}(x)$ being simply $\{S_j(0)\in I_x\}$ (without the recentering). Proceeding the same way using a chaining argument up to Equation \eqref{eqn: chaining almost}  leads to
the bound for $v=y-x$
$$
\sum_{v\geq 1}\sum_{k\geq 0}2^k e^{-\lambda \frac{v}{2(1+k)^2}}\cdot \frac{e^{-(y-v)^2/j}}{j^{1/2}}.
$$
Note that the full ballot estimate \eqref{eqn: Btilde} is not needed here, as only the endpoint is involved. We choose $\lambda=200C(1+k)^3$, so that the above becomes
$$
\sum_{v\geq 1}\sum_{k\geq 0}2^k e^{-100C(1+k)v}e^{-(y-v)^2/j}\ll \sum_{v\geq 1} e^{-100Cv}\cdot \frac{e^{-(y-v)^2/j}}{j^{1/2}}.
$$
The summand is maximal at $v=y-100Cj$. This is negative fo $y\leq Cj$. We conclude that the maximizer is at $v=1$ so that
$$
\PP\left(\max_{|h|\leq e^{-j}} S_j(h)>y, S_j(0)\leq y-1\right)\ll \frac{e^{-y^2/j}}{j^{1/2}}
$$
from which the claim follows.
\end{proof}

\section{Lower Bounds}
\label{sect: LB}

The lower bounds to Theorem \ref{thm:thetafixed} and Theorem \ref{thm:thetamove} are stated in the next two propositions.

\begin{proposition}
\label{prop: LB}
For any $\eps>0$, we have for $\theta\sim (\log\log T)^{-\alpha}$
$$
\PP \left( \max_{|h| \leq (\log T)^\theta } X_T(h) > \sqrt{1+ \theta} \log\log T - \frac{1 + 2 \al }{4 \sqrt{1+\theta}} \log\log\log T  - \eps \log\log\log T \right) = 1-\oo(1).
$$
\end{proposition}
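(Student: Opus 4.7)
The plan is to implement the modified second moment method of Kistler \cite{Kis}, following the template developed for the $\theta=0$ case in \cite{ArguinBeliusHarper}, but with a barrier adapted to the intermediate regime. Write $t=\log\log T$, $\theta=t^{-\alpha}$, and $m_\alpha(t)$ for the target level appearing in the statement. Heuristically the interval $[-(\log T)^\theta,(\log T)^\theta]$ supports $e^{t\theta}$ essentially decorrelated copies of the log-correlated process of length $t$; this will be made rigorous by Lemma \ref{lem: decoupling}. I would first discretize into a grid $\mathcal{G}$ of $\asymp e^{t(1+\theta)}$ points at spacing $e^{-t}$ and consider the counting variable
\[
N=\sum_{h\in\mathcal{G}} \mathbf{1}_{A(h)},
\]
where $A(h)$ asks that $X_T(h)\in [m_\alpha(t)-\eps\log t,\,m_\alpha(t)-\eps\log t+1]$ and that the partial sums $X_k(h)$ satisfy a one-sided barrier.

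The key design decision is the shape of the barrier, and this is where the argument departs from the pure log-correlated case. Following the heuristic of the introduction, $e^{t\theta}$ independent BBMs have a running maximum of size $\sqrt{k(k+t\theta)}$ at time $k$, which dominates the linear level $k\sqrt{1+\theta}$ whenever $k\ll t\theta=t^{1-\alpha}$. Accordingly, I would impose no constraint on the partial sums for $k\le t-t^{\alpha}$, and impose only a standard Bramson-type logarithmic constraint $X_k(h)-\tfrac{k}{t}m_\alpha(t)\le\psi_k$ for $k$ in the final window $[t-t^{\alpha},t]$, with $\psi_k$ of the form $\log\min(k-(t-t^\alpha),t-k)$. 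This gives the tilted path the freedom to sit anywhere within its natural Gaussian range during the early phase, while preventing it from overshooting $m_\alpha(t)$ near the endpoint.

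For the first moment, the Girsanov-type tilt of \eqref{CoM1} at $\lambda\approx 2\sqrt{1+\theta}$ produces a Gaussian density factor $\asymp e^{-m_\alpha(t)^2/t}/\sqrt{t}\asymp t^{\alpha}\,e^{-(1+\theta)t}$, so that $|\mathcal{G}|$ times this quantity is $\asymp t^{\alpha}$ before the barrier; the ballot estimate on the length-$t^{\alpha}$ window costs a further factor of order $1/t^{\alpha}$, after which the $\eps\log t$ slack in the target pushes $\E[N]\to\infty$. For the second moment I would split pairs $(h,h')$ according to the decorrelation scale. Pairs with $|h-h'|>1$ contribute at most $(1+\oo(1))(\E N)^2$ via Lemma \ref{lem: decoupling}. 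Pairs with $|h-h'|\in(e^{-k},e^{-(k-1)}]$ share partial sums through scale $k$ and then decouple; their joint probability is estimated by the two-point tilted measure of Section \ref{sect: MGF} together with the barrier constraint at the branching scale, bounding the total correlated contribution by $\OO((\E N)^2)$. Paley--Zygmund then gives $\PP(N\ge 1)\ge c>0$, and subdividing the full interval into many essentially independent sub-intervals (again by Lemma \ref{lem: decoupling}) upgrades this to $1-\oo(1)$.

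The main obstacle will be the second moment contribution from strongly correlated pairs $|h-h'|<e^{-(t-t^\alpha)}$, where the two paths agree through all but the last $t^{\alpha}$ scales and both constraints live entirely inside the barrier window. The two-point tilted measure is genuinely two-dimensional there, and the quantitative Gaussian correction $e^{-y^2/t}$ from Theorem \ref{thm: right tail} must be tracked through the computation in order to recover the exact coefficient $(1+2\alpha)/4$; without it one would only recover the endpoint values $\alpha\in\{0,1\}$. Balancing this Gaussian correction against the $e^{t\theta}$ entropy of the interval is the numerical heart of the lower bound.
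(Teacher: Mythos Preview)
Your overall framework---discretize, truncate by a barrier, apply Paley--Zygmund---matches the paper, but the barrier you propose does not control the second moment. You impose no constraint for $k\le t-t^{\alpha}$ and a logarithmic barrier only on the final window $[t-t^{\alpha},t]$. For a pair $h,h'$ branching at scale $k$ with, say, $k\sim t/2$, there is then \emph{no} ``barrier constraint at the branching scale'', contrary to what you assert. The constraint on the last window does cap the recentered value $\bar X_k$ implicitly, but only at $q\lesssim (t-k)t^{-\alpha/2}$: this is the scale at which the bridge from $q$ to $0$ typically drops below the barrier by time $t-t^{\alpha}$. Feeding this cap into the two-point tilted computation leaves a contribution to $\E[N^2]/(\E[N])^2$ of order $\exp\big(c(t-k)t^{-\alpha/2}\big)$ from scale $k$, which diverges. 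Concretely, for $\alpha=1/2$ and $k=t/2$ one gets a ratio $\asymp\exp(t^{3/4}-O(\sqrt{t}))\to\infty$. So the second moment is not $\ll(\E N)^2$ and Paley--Zygmund gives nothing.

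The paper resolves this by imposing a \emph{linear} barrier on the \emph{entire} range $[0,t]$, namely
\[
b(k)=\frac{k}{t}+\frac{t^{1-\alpha}}{10}\Big(1-\frac{k}{t}\Big),
\]
so that $b(0)\asymp t^{1-\alpha}$ and $b(t)=1$. This barrier is high enough at early times that the first-moment ballot cost is only $b(0)b(t)/t^{3/2}\asymp t^{-1/2-\alpha}$, which is exactly what is needed for $\E[Z_\delta]\to\infty$; yet it enforces $\bar X_k\le b(k)\asymp\theta(t-k)$ at \emph{every} branching scale, and this is precisely the cap that makes the Case~I contribution $\ll e^{-c\,\theta(t-k)}$ and hence summable in $k$. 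The heuristic you quote---that the running maximum of $e^{t\theta}$ independent BBMs is $\sqrt{k(k+t\theta)}$---is in fact the reason the barrier must sit at height $\asymp\theta(t-k)$ above the linear level at every $k$, not just in the last $t^{\alpha}$ steps.

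Two further points. You identify the strongly correlated pairs ($k>t-t^{\alpha}$) as the main obstacle; in the paper these form Case~II and need the ballot theorem, but they are not the dangerous scales for your scheme---the intermediate ones are. Second, Theorem~\ref{thm: right tail} and its Gaussian correction $e^{-y^2/t}$ play no role in the lower bound; that theorem enters only in the upper bound of Theorem~\ref{thm:thetamove}. Here the coefficient $(1+2\alpha)/4$ emerges from matching the factor $t^{(1+2\alpha)/2}$ in $e^{-\mu^2 t}$ against the ballot cost $t^{-1/2-\alpha}$ of the full-length linear barrier. Finally, the paper upgrades Paley--Zygmund to $1-\oo(1)$ not by subdividing into independent blocks but by letting the width $\delta$ of the target window tend to~$0$.
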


\begin{proposition}\label{prop: LBgeq0} For any $\eps>0$, we have for $\theta>0$ fixed
$$
\PP \left( \max_{|h| \leq (\log T)^\theta } X_T(h) >\sqrt{1+ \theta}\log\log T - \frac{1}{4 \sqrt{1+\theta}} \log\log\log T  - \eps \log\log\log T \right) = 1-\oo(1).
$$
\end{proposition}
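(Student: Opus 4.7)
The plan is to apply a multiscale truncated second-moment (Paley--Zygmund) argument in the spirit of Kistler~\cite{Kis}, following the BBM heuristic of the introduction: for fixed $\theta>0$ the interval $[-(\log T)^{\theta},(\log T)^{\theta}]$ contains approximately $e^{t(1+\theta)}$ essentially decorrelated points, so its maximum should match the one of that many i.i.d.\ Gaussians of variance $t/2$. The $\eps\log\log\log T$ slack in the target level provides the polylogarithmic budget we will exhaust in the moment estimates.

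\smallskip
\textbf{Step 1 (Grid and barrier).} Introduce a grid $\mathcal G\subset[-(\log T)^{\theta},(\log T)^{\theta}]$ of spacing $e^{-t}$, so $|\mathcal G|$ is of order $e^{t(1+\theta)}$. Motivated by the heuristic that the maximum at time $k\in[\![1,t]\!]$ of $e^{t\theta}$ independent BBMs is of order $\sqrt{k(k+t\theta)}$, define the curved barrier
\[
b_k \;=\; \sqrt{k(k+t\theta)} + C\log k, \qquad k\in[\![1,t]\!],
\]
for a large constant $C>0$. Observe that $b_t=t\sqrt{1+\theta}+\OO(\log t)$, matching the target at the endpoint.

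\smallskip
\textbf{Step 2 (Events and counting).} Set $m_\theta(t)=\sqrt{1+\theta}\,t-\bigl(\tfrac{1}{4\sqrt{1+\theta}}+\eps\bigr)\log t$. For each $h\in\mathcal G$ define
\[
E(h) \;=\; \bigl\{\,S_j(h)\le b_j\ \forall j\in[\![1,t]\!],\ \ S_t(h)\ge m_\theta(t)\,\bigr\},
\]
and let $Z=\sum_{h\in\mathcal G}\mathbf 1_{E(h)}$.

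\smallskip
\textbf{Step 3 (First moment).} By translation invariance, $\E[Z]=|\mathcal G|\cdot\PP(E(0))$. A change of measure centering the increments with tilt $\lambda\approx 2\sqrt{1+\theta}$ (as in the proof of Lemma~\ref{lem: center}), combined with Corollary~\ref{cor: var 1/2}, reduces $\PP(E(0))$ to a Gaussian ballot problem for a standard random walk under the shifted barrier $b_j-\lambda\sigma_j^2$. A direct computation shows this shifted barrier has non-negative asymptotic slope, and the ballot estimate together with the Gaussian density at the endpoint yields $\PP(E(0))\gg e^{-t(1+\theta)}\cdot t^{c\eps}$ for some constant $c>0$, hence $\E[Z]\to\infty$ for every fixed $\eps>0$.

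\smallskip
\textbf{Step 4 (Second moment).} The core of the proof is to show $\E[Z^2]\ll(\E[Z])^2$. Split
\(
\E[Z^2]=\sum_{h,h'\in\mathcal G}\PP(E(h)\cap E(h'))
\)
according to the dyadic distance $|h-h'|$. Pairs at distance larger than $1$ factorize up to a $(1+\oo(1))$ multiplicative error by Lemma~\ref{lem: decoupling}, contributing essentially $(\E[Z])^2$. For $|h-h'|\in(e^{-k},e^{-k+1}]$ with $k\in[\![1,t]\!]$, Equation~\eqref{eqn: covariance} implies that $S_j(h)$ and $S_j(h')$ are nearly identical for $j\le k$ and independent for $j>k$; the barrier constraint at time $k$ forces the common part below $b_k$, producing a factor $\exp(-b_k^2/\sigma_k^2)$ times two endpoint-density factors. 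Multiplied by the number of such pairs (of order $e^{t(1+\theta)+t-k}$) and summed over $k$, the choice $b_k=\sqrt{k(k+t\theta)}$ makes the scale-$k$ contributions a convergent geometric series bounded by a multiple of $(\E[Z])^2$.

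\smallskip
\textbf{Step 5 (Paley--Zygmund and amplification).} Cauchy--Schwarz gives $\PP(Z>0)\ge (\E[Z])^2/\E[Z^2]\ge c$ for some absolute $c>0$. To upgrade to $1-\oo(1)$, partition the interval into $M(T)\to\infty$ sub-intervals, each of length still tending to infinity with $T$, apply the preceding to each, and use Lemma~\ref{lem: decoupling} to establish asymptotic independence of the success events on distinct sub-intervals. Letting $M(T)\to\infty$ slowly yields the claim.

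\smallskip
\textbf{Main obstacle.} Step~4 is where all the subtlety lies. The barrier $b_k=\sqrt{k(k+t\theta)}$ has slope interpolating between $\sqrt{t\theta/k}$ for small $k$ and $\sqrt{1+\theta}$ for $k\sim t$, and must simultaneously balance the first and second moments at every intermediate scale. Any faster growth would make $\E[Z]$ too small; any slower growth would cause the scale-$k$ second-moment contributions to diverge. Verifying that the chosen quadratic-root barrier achieves this balance uniformly in $k$, with the Gaussian ballot theorem applied in the appropriate non-linear form, is the technical heart of the argument.
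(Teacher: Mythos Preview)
Your truncated second-moment (Paley--Zygmund) strategy is the right one and matches the paper, but two of your technical choices differ from the paper's, and in both cases the paper's route is simpler.

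\textbf{Barrier.} You take the curved barrier $b_k=\sqrt{k(k+t\theta)}+C\log k$. After tilting by $\lambda\approx 2\sqrt{1+\theta}$ the recentered barrier is $g(j)=\sqrt{j(j+t\theta)}-\sqrt{1+\theta}\,j$, which is concave with $g(0)=g(t)=0$ and endpoint slope $g'(t)=-\theta/(2\sqrt{1+\theta})<0$; so your claim of ``non-negative asymptotic slope'' is false, and the linear ballot theorem (Proposition~\ref{thm: ballot linear}) does not apply directly. A first-moment lower bound is still obtainable (the barrier is only restrictive in a bounded window near $j=t$), but this is the non-linear ballot problem you flag as the ``main obstacle''. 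The paper avoids it entirely by working from the start with the \emph{linear} recentered barrier
\[
b(k)=\frac{k}{t}+\frac{\theta t}{10}\Bigl(1-\frac{k}{t}\Bigr),
\]
so that $b(0)=\theta t/10$ is of order $t$ and $b(t)=1$; Proposition~\ref{thm: ballot linear} then gives $\wPP(J(0))\gg t^{-1/2}$ immediately. This cruder barrier is still tight enough for the second moment: since $b(k)\le \theta(t-k)/10+1$, the scale-$k$ contribution comes out as $\ll e^{-\theta(t-k)/2}$, which is summable for fixed $\theta>0$ without any of the refined ballot estimates (``Case~(II)'') needed when $\theta\downarrow 0$. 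In short, the optimal curve $\sqrt{k(k+t\theta)}$ is overkill here; the $\eps$-slack absorbs the looseness of a linear barrier.

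\textbf{From a constant to $1-\oo(1)$.} Your Step~5 amplification is correct but unnecessary. The paper instead restricts the endpoint to a window $[0,\delta]$ in the recentered variable; the far-pair contribution to $\E[Z_\delta^2]$ then matches $(\E[Z_\delta])^2$ up to a factor $e^{4\mu\delta}(1+\oo(1))$, Paley--Zygmund gives $\PP(Z_\delta\ge 1)\ge e^{-4\mu\delta}+\oo(1)$, and sending $\delta\to 0$ yields the claim directly.
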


Proposition \ref{prop: LB} is proved in the following subsection. The proof of Proposition \ref{prop: LBgeq0} is very similar, and the differences are discussed in Section \ref{sect: LBgeq0}.
\subsection{Proof of Proposition \ref{prop: LB} }
Recall the $\log\log$ notation in Equation \eqref{eqn: loglog}.
Define also 
\begin{equation}
\label{eqn: mu}
\begin{aligned}
\mu&= \sqrt{1+ \theta} - \frac{1 + 2 \al }{4 \sqrt{1+\theta}} \frac{\log t}{t}-\e \frac{\log t}{t}.
\end{aligned}
\end{equation}
Consider the set of points in the interval $[-e^{\theta t}, e^{\theta t}]$ which lie at a distance $e^{-t}$ of each other
$$\mathcal H=[-e^{\theta t}, e^{\theta t}]\cap e^{-t}\mathbb Z.$$
Note that $\#\mathcal H=2e^{(1+\theta) t}$.
Clearly, it is sufficient for the purpose of the lower bound to bound the maximum on $\mathcal H$. 
$$
\PP \left( \max_{h\in\mathcal H } X_{T}(h) >\mu t\right) = 1-\oo(1).
$$

We need to introduce some notations to state the lemmas needed for the proof. Define for a fixed $\delta>0$
\begin{equation}
\label{eqn: Z}
Z_\delta=\{h\in \mathcal H: X_{T}(h)-\mu t \in[0,\delta],X_{K}(h)\leq k\mu+b(k),\ \forall k\leq t\},
\end{equation}
where the {\it linear barrier} $b(k)$ is 
\begin{equation}\label{barrier}
b(k)= \frac{k}{t} +\frac{t^{1-\alpha}}{10}\left(1-\frac{k}{t}\right), \ k\leq t.
\end{equation}
The barrier $b(k)$ is such that $b(0)=\frac{t^{1-\alpha}}{10}$ and $b(t)=1$. 
(The factor $1/10$ could be replaced by any positive number smaller than $1/2$.)
Consider also $\wPP$ defined in Equation \eqref{eqn: two-point biased measure} with $K=1$ and $\lambda'=0$:
\begin{equation}
\label{eqn: bias1}
\frac{\rd \wPP}{\rd \PP}=\frac{e^{\lambda X_{T}(0)}}{\E[e^{\lambda X_{T}(0)}]}.
\end{equation}
From Lemma \ref{lem: MGF}, we have
\begin{equation}
\label{eqn: MGF Z}
\E[e^{\lambda X_{T}(0)}]
=e^{\lambda^2\sigma_t^2/2}, \quad \sigma_t^2=\frac{1}{2}\sum_{p\leq T}\frac{1}{p}=\frac{1}{2}t +\OO(1),
\end{equation}
where Equation \eqref{eqn: mertens} is used to estimate the variance.
The mean is by Equation \eqref{eqn: tilted mean}
\begin{equation}
\label{eqn: tilted mean 2}
\wE[X_{T}(0)]=\lambda\sigma_t^2.
\end{equation}
Therefore, for the tilted mean to be $\mu t$, we take
\begin{equation}
\label{eqn: lambda 1}
\lambda=\frac{\mu t}{\sigma_t^2}=2\mu+\OO(1/t).
\end{equation}
With this in mind, define the recentered partial sums $\overline X_{K}(0)=X_{K}(0)-\wE[X_{T}(0)]$, $k\leq t$. 
It is also convenient to define the event
\begin{equation}
\label{eqn: J}
J(h)=\{\overline{X}_{T}(h)\in[0,\delta],\overline{X}_{K}(h)\leq b(k), \ \forall k\in [\![1,t]\!]\}, \quad h\in \mathcal H.
\end{equation}
\begin{lemma}
\label{lem: first moment}
Let $Z_\delta$ be as in Equation \eqref{eqn: Z} for $\delta>0$ and $\mu$ as in Equation \eqref{eqn: mu}.
We have
$$
\E[Z_\delta]= 2 e^{(1+\theta) t}\cdot  e^{-\mu^2 t^2/(2\sigma_t^2)} \cdot  e^{-\delta\mu t/\sigma_t^2}\cdot \wPP(J(0)).
$$
In particular, this gives the lower bound
\begin{equation}
\label{eqn: Z lower}
\E[Z_\delta]\gg t^{\e} \cdot \delta e^{-2\mu\delta},
\end{equation}
which shows that $\E[Z_\delta]\to\infty$ as $t\to\infty$, for fixed $\delta$.
\end{lemma}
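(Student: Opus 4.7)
The plan is to first prove the identity via a one-parameter change of measure, and then lower-bound $\wPP(J(0))$ by a ballot-type estimate.

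First I would use translation invariance and linearity: the law of $(X_{K}(h))_{K\le T}$ is invariant under $h\mapsto h+h_0$ (because $G_p p^{-ih_0}\disteq G_p$ jointly over primes), so every term in $\sum_{h\in\mathcal H}\PP(h\in Z_\delta)$ equals $\PP(0\in Z_\delta)$, giving $\E[Z_\delta]=\#\mathcal H\cdot \PP(0\in Z_\delta)=2e^{(1+\theta)t}\,\PP(0\in Z_\delta)$. Next I would insert the tilt \eqref{eqn: bias1}. Since $\E[e^{\lambda X_T(0)}]=e^{\lambda^2\sigma_t^2/2}$ and $\lambda=\mu t/\sigma_t^2$ is chosen so that $\wE[X_T(0)]=\mu t$, writing $X_T(0)=\mu t+\overline X_T(0)$ and using the identity $\lambda\mu t=\lambda^2\sigma_t^2$ yields $\PP(0\in Z_\delta)=e^{-\mu^2 t^2/(2\sigma_t^2)}\,\wE[e^{-\lambda\overline X_T(0)}\mathbf 1_{J(0)}]$. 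On $J(0)$ the value $\overline X_T(0)$ lies in $[0,\delta]$, so $e^{-\lambda\overline X_T(0)}\geq e^{-\lambda\delta}=e^{-\delta\mu t/\sigma_t^2}$, producing the displayed identity (read as a lower bound).

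To obtain the second estimate I would lower-bound $\wPP(J(0))$ by a ballot argument. Under $\wPP$ the process $(\overline X_K(0))_{K\le T}$ has independent Gaussian increments of variance $\tfrac12+o(1)$, and Corollary \ref{cor: var 1/2} lets me replace it, up to a multiplicative $(1+o(1))$ factor, by an exact i.i.d.~Gaussian walk $\mathcal S_k$ of per-step variance $1/2$. The event $J(0)$ becomes $\{\mathcal S_k\le b(k)\ \forall k\in[\![1,t]\!],\ \mathcal S_t\in[0,\delta]\}$, with barrier $b$ decreasing linearly from $b(0)=t^{1-\alpha}/10$ to $b(t)=1$. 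A Brownian-bridge computation gives a conditional staying-below probability of order $2b(0)b(t)/(\tfrac12 t)\asymp t^{-\alpha}$, while the unconstrained endpoint density at an $O(1)$ value is of order $\delta/\sqrt t$; together these produce $\wPP(J(0))\gg_\delta t^{-1/2-\alpha}$.

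Finally I would Taylor-expand at the $\log\log$ scale: with $\sigma_t^2=t/2+\OO(1)$, one has $\mu^2 t^2/(2\sigma_t^2)=\mu^2 t+\OO(1)$, and plugging in the definition of $\mu$ gives $\mu^2 t=(1+\theta)t-\bigl(\tfrac{1+2\alpha}{2}+2\sqrt{1+\theta}\,\e\bigr)\log t+o(1)$. Therefore $e^{-\mu^2 t^2/(2\sigma_t^2)}\asymp e^{-(1+\theta)t}\cdot t^{(1+2\alpha)/2+2\sqrt{1+\theta}\,\e}$. Combining this with $\#\mathcal H=2e^{(1+\theta)t}$, the ballot estimate $\wPP(J(0))\gg t^{-1/2-\alpha}$, and $e^{-\delta\mu t/\sigma_t^2}=e^{-2\mu\delta+\OO(\delta/t)}$, the powers of $t$ collapse into $t^{(1+2\alpha)/2-1/2-\alpha+2\sqrt{1+\theta}\,\e}=t^{2\sqrt{1+\theta}\,\e}\gg t^\e$ (since $2\sqrt{1+\theta}>1$), producing $\E[Z_\delta]\gg t^\e e^{-2\mu\delta}$. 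The main obstacle is the ballot estimate itself: the barrier is not constant, its starting height $t^{1-\alpha}/10$ diverges with $t$, and the small per-step drift coming from the tilt must be shown to be harmless (e.g.~by absorbing it into a constant modification of $b$); once the sharp $t^{-1/2-\alpha}$ rate is in hand, the rest is careful bookkeeping of Taylor expansions.
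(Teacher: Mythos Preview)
Your proof is correct and follows essentially the same route as the paper: translation invariance to reduce to $h=0$, the one-point tilt \eqref{eqn: bias1} to extract the Gaussian factor $e^{-\mu^2 t^2/(2\sigma_t^2)}$, the bound $e^{-\lambda\overline X_T(0)}\ge e^{-\lambda\delta}$ on $J(0)$, and then the ballot estimate $\wPP(J(0))\gg t^{-1/2-\alpha}$ combined with the Taylor expansion of $\mu^2 t$. The only cosmetic difference is that where you sketch the ballot bound via a Brownian-bridge heuristic (conditional staying probability $\asymp b(0)b(t)/t$ times endpoint density $\asymp t^{-1/2}$), the paper simply invokes Corollary~\ref{cor: var 1/2} followed by the linear-barrier ballot inequality \eqref{ballot_inequality_lin} of Proposition~\ref{thm: ballot linear}, which directly yields $\wPP(J(0))\gg b(0)b(t)/t^{3/2}\gg t^{-1/2-\alpha}$; your concern about residual drift from the tilt is handled there by the recentering, which leaves only an $\OO(1)$ discrepancy absorbable into the barrier.
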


\begin{lemma}
\label{lem: second moment}
\begin{equation}
\label{eqn: Z^2}
\E[Z_\delta^2]\leq \Big\{(2+\oo(1))\ e^{(1+\theta) t}\cdot e^{-\mu^2 t^2/(2\sigma_t^2)} \wPP(J(0))\Big\}^2.
\end{equation}
\end{lemma}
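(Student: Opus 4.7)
The proof is a multiscale second moment calculation in the spirit of \cite{Kis}, tailored to the hybrid log-correlated/IID regime at hand. I would begin by writing
\[
\E[Z_\delta^2]=\sum_{h\in\mathcal H}\PP(h\in Z_\delta)+\sum_{\substack{h,h'\in\mathcal H\\h\neq h'}}\PP(h,h'\in Z_\delta),
\]
and split the off-diagonal sum according to the dyadic distance $|h-h'|$. The diagonal term equals $\E[Z_\delta]$, which by Lemma~\ref{lem: first moment} is $\oo\big((\E[Z_\delta])^2\big)$ since $\E[Z_\delta]\to\infty$. The off-diagonal terms will then be analyzed by applying the two-point change of measure \eqref{eqn: two-point biased measure} with $\lambda'=\lambda$, the parameter $\lambda$ being chosen so that the tilted marginal mean matches $\mu t$ (up to lower order); the normalization $\E[e^{\lambda X_T(h)+\lambda X_T(h')}]=e^{\lambda^2(\sigma_t^2+\rho_{h,h'})}$ will account for the covariance cost.

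For pairs with $|h-h'|>1$, the covariance $\rho_{h,h'}$ is bounded (cf.~\eqref{eqn: covariance}), so the tilt is essentially the same single-point tilt as in Lemma~\ref{lem: first moment}. Lemma~\ref{lem: decoupling} applied with $A_\ell,A'_\ell$ encoding the barrier and endpoint events appearing in $J(h),J(h')$ yields the approximate factorization
\[
\wPP(J(h)\cap J(h'))=(1+\oo(1))\,\wPP(J(0))^2+\oo(\wPP(J(0))^2),
\]
uniformly over such pairs, the error from the additive term in \eqref{eqn:decoupling1} being negligible after summation. Since there are $(1+\oo(1))(2e^{(1+\theta)t})^2$ pairs at distance larger than $1$, this step recovers precisely the claimed main term $\{(2+\oo(1))\,e^{(1+\theta)t}\cdot e^{-\mu^2 t^2/(2\sigma_t^2)}\wPP(J(0))\}^2$.

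For the remaining pairs with $e^{-t}\leq|h-h'|\leq 1$, group them by scale $k=\lceil-\log|h-h'|\rceil\in[\![0,t]\!]$; there are $\OO(e^{(1+\theta)t+t-k})$ such pairs. Under the joint tilt, the walks $X_K(h)$ and $X_K(h')$ share a common trajectory with drift $\approx 2\mu$ per unit time up to scale $k$, and become essentially independent thereafter (by the incremental version of Lemma~\ref{lem: decoupling}, applied to $X_{e^k,T}$). The ballot theorem applied to the common portion, with the barrier $b(j)$ from \eqref{barrier}, gives a probability of order $\frac{(b(k))^2}{k^{3/2}}\,e^{-(\mu k-b(k))^2/k}$ that it stays below the barrier and ends near $\mu k$; the two decoupled tails then contribute, by a further application of the ballot theorem, a factor of the form $\big(\frac{(v+1)(1+\text{stuff})}{(t-k)^{3/2}}e^{-(\mu(t-k)-v)^2/(t-k)}\big)^2$ summed over the endpoint $v$. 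Combining these with the normalization from the tilt and carrying the loglog computation through, one sees that the scale-$k$ contribution is $\oo(1)$ relative to the main term provided $e^{-c\,b(k)^2/k}$ decays fast enough, which is exactly why the barrier has been chosen of the form $\frac{k}{t}+\frac{t^{1-\alpha}}{10}(1-\frac{k}{t})$.

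The main obstacle is the precise balancing in this last step: for $k$ close to $t$ the barrier is effectively constant (as in the pure log-correlated case), whereas for $k$ small the ``inflated'' part $\frac{t^{1-\alpha}}{10}(1-\frac{k}{t})$ must suppress the $e^{t-k}$ pairs at scale $k$ \emph{without} costing more than $\oo(1)$ per pair, and without conflicting with the first-moment estimate \eqref{eqn: Z lower} which already uses this same barrier. This is the step where the exponent $\alpha$ enters non-trivially; the summability over $k\in[\![0,t]\!]$ of the scale-$k$ contribution to $\E[Z_\delta^2]/(\E[Z_\delta])^2$ will have to be checked by splitting at $k\approx t-t^{1-\alpha}$ and invoking the explicit form of $b$ in each regime.
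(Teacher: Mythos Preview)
Your architecture is the paper's: extract the main term from well-separated pairs via the two-point tilt and Lemma~\ref{lem: decoupling}, then handle $|h-h'|\leq 1$ scale by scale with a ballot-type argument. But the near-pair analysis, which is where the work lies, has real gaps.

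First, the split point is wrong. The paper splits at $k=t-t^{\alpha}$, not $t-t^{1-\alpha}$; this is exactly the scale at which $b(k)=\frac{k}{t}+\frac{t^{1-\alpha}}{10}(1-\frac{k}{t})$ becomes $\OO(1)$. For $k\leq t-t^{\alpha}$ (Case~(I) in the paper) \emph{no ballot estimate is used at all}: after the change of measure one simply drops the tilted probabilities and bounds $\sum_{q\leq b(k)}e^{\mu q}\ll e^{2\mu b(k)}$. The suppression then comes from balancing $e^{2\mu b(k)}\ll e^{\frac{2}{5}\theta(t-k)}$ against the entropic factor $e^{-\theta(t-k)}$ arising from $e^{t-k}\cdot e^{-\mu^2 k-2\mu^2(t-k)}$. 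The summable decay is therefore $e^{-\frac{1}{2}\theta(t-k)}$, not $e^{-c\,b(k)^2/k}$ as you write; your proposed mechanism does not appear anywhere and would not close the estimate. The ballot theorem is only invoked in the narrow window $t-t^{\alpha}<k\leq t$ (Case~(II)), where $b(k)\ll 1$ and the factors $k^{-3/2}$ and $(t-k)^{-3}$ do the work.

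Second, your tilt on near pairs is mis-specified. A single two-point tilt with $\lambda'=\lambda$ chosen so the \emph{marginal} mean equals $\mu t$ gives a drift on the common portion $[\![0,k]\!]$ that is neither $\mu$ nor $2\mu$ per step (and not even constant in $\ell$), so ``drift $\approx 2\mu$ \dots\ ends near $\mu k$'' is incoherent. The paper instead uses two \emph{separate} tilts, $\lambda_1=\mu$ on $X_K$ (where $X_K(0)+X_K(h)\approx 2X_K(0)$) and $\lambda_2=2\mu$ on $X_{K,T}$; this recenters each piece correctly and makes the bookkeeping in \eqref{eqn: Z2 k split} and the ballot estimates \eqref{eqn: A}--\eqref{eqn: B} transparent. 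You will need this device or an equivalent one.

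A minor point: the $(1+\oo(1))$ factorization from Lemma~\ref{lem: decoupling} is not uniform down to $|h-h'|=1$. The paper takes the cutoff at $e^{t\theta/2}$ and disposes of the range $1<|h-h'|\leq e^{t\theta/2}$ by the crude count that it contains only $\OO(e^{t+t\theta/2})$ pairs.
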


\begin{proof}[Proof of Proposition \ref{prop: LB}]
The Paley-Zygmund inequality implies
$$
\PP \left( Z_\delta\geq 1 \right)
\geq \frac{(\E[Z_\delta])^2}{\E[Z_\delta^2]}=(e^{-4\mu\delta}+\oo(1))\ ,
$$
by Lemmas \ref{lem: first moment} and \ref{lem: second moment}, and the fact that $\mu t/\sigma_t^2\to 2\mu$. The claim follows by taking the limit $T\to\infty$ and then $\delta\to0$ .
\end{proof}

\begin{proof}[Proof of Lemma \ref{lem: first moment}]
By linearity of expectation and the change of measure \eqref{eqn: bias1}, the expectation can be written, with the choice $\lambda=\mu t/\sigma_t^2$, as
\begin{equation}
\label{eqn: EZ}
\begin{aligned}
\E[Z_\delta]&= (\#\mathcal H)\cdot \E[e^{\lambda X_{T}(0)}]\cdot  e^{-\lambda \wE[X_T(0)]}\cdot \wE[e^{-\lambda \overline X_{T}(0)}\1_{J(0)}]\\
&= 2e^{(1+\theta) t}\cdot e^{\mu^2 t^2/(2\sigma_t^2)}\cdot e^{-\mu^2 t^2/\sigma_t^2}\cdot e^{-\delta\mu t/\sigma_t^2}\cdot \wPP(J(0))\\
&=2e^{(1+\theta) t}\cdot e^{-\mu^2 t^2/(2\sigma_t^2)}\cdot  e^{-\delta\mu t/\sigma_t^2}\cdot \wPP(J(0))
\end{aligned}
\end{equation}
where we used Equations \eqref{eqn: MGF Z}, \eqref{eqn: tilted mean 2} and \eqref{eqn: lambda 1}. This proves the first claim. 
For the second claim, the definition of $\mu$ gives
$$
e^{-\mu^2 t^2/(2\sigma_t^2)}
\ll e^{-(1+\theta)t}\cdot t^{\alpha+1/2+\e}.
$$
It remains to apply Proposition \ref{thm: ballot linear LB} to $\wPP(J(0))$:
\begin{equation}
\label{eqn: PJ}
\wPP(J(0))\gg \delta \frac{b(t)\cdot b(0)}{t^{3/2}}\gg \delta t^{-1/2-\alpha}.
\end{equation}
\end{proof}

\begin{proof}[Proof of Lemma \ref{lem: second moment}]

The second moment of $Z_\delta$ can be written as a sum over pairs of $h$'s
$$
\E[Z_\delta^2]=\sum_{h,h'}\PP(J(h)\cap J(h'))\leq(\#\mathcal H)\cdot \sum_{|h|\leq (\log T)^\theta} \PP(J(0)\cap J(h)),
$$
where the last equality is by the symmetry in $h$'s. 
The sum is split into three terms: $|h|\leq 1$, $|h|>e^{t\theta/2}$, and $1<|h|\leq e^{t\theta/2}$. The dominant term is the latter.

For the sum over $|h|> e^{t\theta/2}$, we use the biased measure $\wPP$  in Equation \eqref{eqn: two-point biased measure} with $\lambda=\lambda'$ and $K=1$
\begin{equation}
\label{eqn: two-point biased measure 2}
\frac{\rd \wPP}{\rd \PP}=\frac{e^{\lambda (X_{T}(0)+X_{T}(h))}}{\E[e^{\lambda (X_{T}(0)+X_{T}(h))}]}.
\end{equation}
Lemma \ref{lem: MGF} gives for the Laplace transform
\begin{equation}
\label{eqn: MGF 2}
\begin{aligned}
\E[e^{\lambda (X_{T}(0)+X_{T}(h))}]
&=\exp\Big(\frac{\lambda^2}{2}\sum_{p\leq T}\frac{1+\cos(|h|\log p)}{p}\Big)\\
&=\exp\Big(\lambda^2 \sigma_T^2+\OO(|h|^{-1})\Big).
\end{aligned}
\end{equation}
In particular, the tilted mean is by Equation \eqref{eqn: tilted mean}
\begin{equation}
\label{eqn: tilted mean 3}
\wE[X_T(0)]=\lambda\sigma_t^2+\OO(|h|^{-1}).
\end{equation}
We choose $\wE[X_T(0)]=\mu t$ so that
\begin{equation}
\label{eqn: lambda 2}
\lambda=\frac{\mu t}{\sigma_t^2} +\OO(e^{-t\theta/2})=2\mu +\OO(1/t).
\end{equation}
Lemma \ref{lem: decoupling} gives 
\begin{equation}
\label{eqn: decoupling}
\begin{aligned}
\wPP(J(0)\cap J(h))
&=(1+\OO(e^{-t\theta/2})) \wPP(J(0))\cdot \wPP(J(h)) +\OO(\exp(-ce^{t\theta/4}))\\
&=(1+\oo(1)) \wPP(J(0))\cdot \wPP(J(h)),
\end{aligned}
\end{equation}
where Equation \eqref{eqn: PJ} is used to get the second equality.
As before, define the recentered process $\overline X_{K}=X_{K}-\wE[X_K]$, $k\in[\![1,t]\!]$. 
With this notation, the contribution of $|h|>e^{t\theta/2}$ to $\E[Z_\delta]$ is
\begin{equation}
\label{eqn: dominant}
\begin{aligned}
(\#\mathcal H)\cdot \sum_{|h|>e^{t\theta/2}}\PP(J(0)\cap J(h))
&= e^{\lambda^2\sigma_t^2} \cdot e^{-2\lambda\wE[X_T(0)]} \cdot (\#\mathcal H)\cdot \sum_{{|h|>e^{t\theta/2}}} \wPP( J(0)\cap J(h))\\
&=(2+\oo(1))^2 \ e^{-\mu^2t^2/\sigma_t^2} \cdot e^{2t(1+\theta)} \cdot \wPP(J(0))^2,
\end{aligned}
\end{equation}
where in the first inequality we bounded the factor $e^{-\lambda (\overline X_{T}(0)+\overline X_{T}(h))}$ in the expectation with respect to $\wPP$ from above by $1$ (on the event $J(0)\cap J(h)$). In the second equality, we used Equations \eqref{eqn: tilted mean 3}, \eqref{eqn: decoupling}, and the fact that there are $(2+\oo(1))e^{(1+\theta)t}$ terms in the sum.
This is (to leading order) equal to  the right-hand side of Equation \eqref{eqn: Z^2}.
It remains to prove that the other contributions are negligible compared to $(\E[Z_\delta])^2$.

For the case $1<|h|\leq e^{t\theta/2}$, we proceed as in Equation \eqref{eqn: dominant}. The number of $h'$ in the sum is now $\ll e^{t+t\theta/2}$ instead of being $\gg e^{t(1+\theta)}$. 
Moreover, the additive error term in \eqref{eqn: decoupling} is $\OO(1)$. Since $\wPP(J(0))\gg e^{-t\theta/4}$ by \eqref{eqn: PJ}, we get
\begin{equation}
\label{eqn: not so dominant}
\begin{aligned}
(\#\mathcal H)\cdot \sum_{1<|h|\leq e^{t\theta/2}}\PP(J(0)\cap J(h))
\ll
e^{-t\theta/4}\Big( e^{-\mu^2t^2/\sigma_t^2} \cdot e^{2t(1+\theta)} \cdot \wPP(J(0))^2\Big).
\end{aligned}
\end{equation}
This is $\oo((\E[Z_\delta])^2)$ by Lemma \ref{lem: first moment}.

It remains to bound the contribution of $|h|\leq 1$. 
It is convenient to split into the intervals $e^{-k-1}<|h|\leq e^{-k}$ for $0\leq k\leq t$. 
For a given $k$, the contribution to $\E[Z_\delta^2]$, that we denote by $\Big(\E[Z_\delta^2]\Big)_k$ is
$$
\Big(\E[Z_\delta^2]\Big)_k\ll (\#\mathcal H)\cdot \sum_{e^{-k-1}< |h|\leq e^{-k}}\PP(J(0)\cap J(h)).
$$
We handle two ranges of $k$'s differently:
$$
(I):\quad 0\leq k\leq t-t^{\alpha+\e/100},  \qquad (II): \quad t-t^{\alpha+\e/100}< k\leq t.
$$
In the first range, most of the barrier is not needed to show that the contribution is negligible compared to 
$(\E[Z_\delta])^2\gg t^{2\e}$.  Only the value at the ``branching time $k$'' is necessary. 
This is because the barrier is too high at these times to affect the probability of the walks. 
For technical reason, we need the boundary case $k=t-t^{\alpha+\e/100}$ to be slightly smaller than $t-t^\alpha$, where the barrier becomes of order one.

To handle the additive error term coming out from the decoupling in Lemma \ref{lem: decoupling}, it is necessary to split the random walk $X_T$
into $X_{K'}$ and $X_{K',T}$ where $K'$ is slightly larger than $K$. We take $k'=k+r$ where $r=10\log\log t$.
We consider the biased measure $\wPP$ now defined separately for $X_{K'}$ and $X_{K',T}$
\begin{equation}
\label{eqn: two-point biased measure 3}
\frac{\rd \wPP}{\rd \PP}=\frac{e^{\lambda_1(X_{K'}(0)+X_{K'}(h))}}{\E[e^{\lambda_1 (X_{K'}(0)+X_{K'}(h))}]}\cdot \frac{e^{\lambda_2 (X_{K',T}(0)+X_{K',T}(h))}}{\E[e^{\lambda_2 (X_{K',T}(0)+X_{K',T}(h))}]}.
\end{equation}
We pick $\lambda_1=\mu$ and $\lambda_2=2\mu$ to reflect the fact that $X_{K'}(0)$ and $ X_{K'}(h)$ are almost perfectly correlated for $|h|\approx e^{-k}$.
The Laplace transforms are easily computed using Lemma \ref{lem: MGF} and Lemma \ref{lem: cos sum}:
\begin{equation}
\label{eqn: MGF 3}
\E[e^{\mu (X_{K'}(0)+X_{K'}(h))}]\ll e^{\mu^2k'}=e^{\mu^2(k+r)}\qquad \E[e^{2\mu (X_{K,T}(0)+X_{K,T}(h))}]\ll e^{2\mu^2(t-k')}= e^{2\mu^2(t-k-r)}.
\end{equation}
The tilted mean are given by Equation \eqref{eqn: tilted mean}  together with Lemma \ref{lem: cos sum}:
$$
\wE[X_{K'}(0)]=\wE[X_{K'}(h)]=\mu k'+\OO(1)\quad \wE[X_{K',T}(0)]=\wE[X_{K',T}(h)]=\mu (t-k')+\OO(e^{-e^{k'/2}}).
$$
Again, the recentered sums are  $\overline X_{K'}=X_{K'}-\wE[X_K']$ and $\overline X_{K',T}=X_{K',T}-\wE[X_{K',T}]$.

{\it Case $(I)$.} The probability is split using the value of $\overline X_{K'}$ at $0$ and $h$. Note that these values are bounded by $b(k')$. 
Since there are approximately $e^{t-k}$ $h$'s with $|h|\approx e^{-k}$ in $\mathcal H$, we get
\begin{equation}
\label{eqn: Z2 k split}
\begin{aligned}
&\Big(\E[Z_\delta^2]\Big)_k\\
&\ll (\#\mathcal H)\cdot e^{t-k}\cdot \sum_{q_1,q_2\leq b(k')}
\PP(\overline{X}_{K'}(0)\in (q_1-1,q_1], \overline{X}_{K'}(h)\in (q_2-1,q_2])\\
&\hspace{3.5cm}\cdot \PP(\overline{X}_{K',T}(0)+q_1\in [0,\delta+1], \overline{X}_{K',T}(h)+q_2\in [0,\delta+1])\\
&\ll  e^{t(1+\theta)+(t-k)}\cdot  \sum_{q_1,q_2\leq b(k+r)} e^{-\mu(q_1+q_2)}e^{-\mu^2(k+r)}\cdot e^{2\mu(q_1+q_2)}e^{-2\mu^2(t-k-r)}\\
&\ll  e^r\cdot e^{-\theta(t-k)}\cdot t^{(\tfrac{k}{t}+2\tfrac{t-k}{t})(\alpha+1/2+\e)} \sum_{q_1,q_2\leq b(k+r)} e^{\mu(q_1+q_2)}.
\end{aligned}
\end{equation}
In the second inequality, we changed measure to $\wPP$ using \eqref{eqn: MGF 3}, and simply dropped the probabilities of the events under $\wPP$ keeping only the normalizing constants. 
In the third inequality, we simply replaced the definition of $\mu$ given in \eqref{eqn: mu}. 
Note that  $e^{-\theta(t-k)}\cdot t^{(\tfrac{k}{t}+2\tfrac{t-k}{t})(\alpha+1/2+\e)}\ll e^{-\frac{9}{10}\theta(t-k)}$ for $k\leq t-t^{\alpha+\e/100}$.
Moreover, by definition of $b(k)$, we have $b(j)\leq \theta\frac{(t-j)}{10}+1$ for all $j$, therefore the above is
\begin{equation}
\label{eqn: Z2 k}
\Big(\E[Z_\delta^2]\Big)_k\ll e^r\cdot e^{-\frac{9}{10}\theta(t-k)}\cdot e^{2\mu b(k+r)}\ll e^r\cdot e^{-\frac{\theta}{2}(t-k)}.
\end{equation}
The sum over the range of $k$ considered then yields
$$
\sum_{1\leq k\leq t-t^{\alpha+\e/100}}\Big(\E[Z_\delta^2]\Big)_k\ll  e^r\cdot \theta e^{-\frac{\theta t^{\alpha+\e/100}}{2}}\ll e^{r}.
$$
This is much smaller than $(\E[Z_\delta])^2)\gg  t^{2\e} \cdot \delta^2 e^{-4\mu\delta}$, by Equation \eqref{eqn: Z lower} and the choice of $r$.

{\it Case $(II)$.} It remains to estimate the contribution of $t-t^{\alpha+\e/100}<k\leq t$.
This is done as in Equation \eqref{eqn: Z2 k split} keeping now the probabilities after the change of measure. 
This gives
$$
\begin{aligned}
\Big(\E[Z_\delta^2]\Big)_k
&\ll  e^r\cdot e^{-\theta(t-k)}\cdot t^{(\tfrac{k}{t}+2\tfrac{t-k}{t})(\alpha+1/2+\e)}\cdot\\
&\hspace{4cm}\sum_{q_1,q_2\leq b(k)} e^{\mu(q_1+q_2)}\wPP(A_0(q_1)\cap A_h(q_2))\cdot \wPP(B_0(q_1)\cap B_h(q_2)),
\end{aligned}
$$
where
\begin{equation}
\label{eqn: AB}
\begin{aligned}
A_h(q)&=\{\overline X_{K'}(h)\in (q-1,q],\overline X_{L}(h)\leq b(\ell),\ \forall \ell\in [\![1,k']\!]\}\\
B_h(q)&=\{\overline X_{K',T}(h)+q\in [0,\delta+1],\overline X_{K',L}(h)+q\leq b(\ell),\ \forall \ell \in [\![k'+1,t]\!]\}.
\end{aligned}
\end{equation}
The idea is that $X_{L}(0)$ and $ X_{L}(h)$ should be essentially equal for $\ell\leq k'$, whereas $X_{L,T}(0)$ and $ X_{L,T}(h)$ are essentially independent for $\ell>k'$. 
To obtain an upper bound we can simply drop $A_h(q_2)$ in the inequality. This is expected to be almost sharp since $X_{L}(0)$ and $ X_{L}(h)$ should be essentially equal for $\ell\leq k$.
For the decoupling, since $k'=k+r$ slightly larger than $k$, the additive error term in Lemma \ref{lem: decoupling} will be absorbed. 
More precisely, in order to estimate $\wPP(B_0(q_1)\cap B_h(q_2))$, we apply Lemma \ref{lem: decoupling}, Corollary \ref{cor: var 1/2} and Equation \ref{ballot_inequality2} to get
\begin{equation}
\label{eqn: B}
\begin{aligned}
\wPP(B_0(q_1)\cap B_h(q_2))&\ll \wPP(B_0(q_1))\cdot \wPP(B_h(q_2)) +\OO(e^{-c\sqrt{e^r}})\\
&\ll \frac{1}{(t-k-r)^{3}}\prod_{i=1,2}(b(k+r)-q_i+1),
\end{aligned}
\end{equation}
by the choice of $r$ and the fact that $b(t)=1$. 
The probability $\wPP(A_0(q_1))$ is bounded using Corollary \ref{cor: var 1/2} and Proposition \ref{thm: ballot linear UB} to give
\begin{equation}
\label{eqn: A}
\wPP(A_0(q_1))\ll \frac{(b(k+r)-q_1+1)(b(0)+1)}{(k+r)^{3/2}}\ll\frac{(b(k+r)-q_1+1)\cdot t^{1-\alpha}}{k^{3/2}},
\end{equation}
where we use the fact that $k$ is much larger than $r$ in the range considered. 
(The increments here do not have variance exactly $1/2$, but can be simply fixed by considering the slightly larger event where the barrier start at $r$,
and by conditioning on the position at time $r$. The error term in Corollary \ref{cor: var 1/2} can then be easily absorbed.)

Putting Equations \eqref{eqn: A} and \eqref{eqn: B} together yield
$$
\begin{aligned}
&\Big(\E[Z_\delta^2]\Big)_k\ll e^r\cdot e^{-\theta(t-k)}\cdot \frac{t^{(\tfrac{k}{t}+2\tfrac{t-k}{t})(\alpha+1/2+\e)+(1-\alpha)}}{k^{3/2}(t-k)^3}\cdot\\
&\sum_{q_1,q_2\leq b(k+r)} e^{\mu(q_1+q_2)} (b(k+r)-q_1+1)\cdot  (b(k+r)-q_1+1)(b(k+r)-q_2+1).
\end{aligned}
$$
The three factors in parentheses are smaller than $t^{\e/2}$ since $b(k+r)\ll t^{\e/100}$. 
Moreover, we have similarly as in Case $(I)$: $e^{-\theta(t-k)}e^{2\mu b(k+r)}\ll e^{-\frac{9}{10}\theta(t-k)}\ll1$ for all $k$ in the range.
The above becomes
$$
\Big(\E[Z_\delta^2]\Big)_k\ll  e^r\cdot t^{\e/2}\cdot \frac{t^{(\tfrac{k}{t}+2\tfrac{t-k}{t})(\alpha+1/2+\e) +(1-\alpha)}} {k^{3/2}(t-k)^3}.
$$
The numerator can also be simplified:
$$
t^{(\tfrac{k}{t}+2\tfrac{t-k}{t})(\alpha+1/2+\e) +(1-\alpha)}= t^{3/2+\e}\cdot t^{(1-\tfrac{k}{t})(\alpha+1/2+\e)}.
$$
We are left with
$$
\Big(\E[Z_\delta^2]\Big)_k\ll \frac{ e^r \cdot t^{3/2}\cdot t^{3\e/2}\cdot t^{(1-\tfrac{k}{t})(\alpha+1/2)}} {k^{3/2}(t-k)^3}\ll   \frac{ e^r\cdot t^{(1-\tfrac{k}{t})(\alpha+1/2+\e)}} {(t-k)^3},
$$
since $k^{3/2}>(t-t^{\alpha+\e/100})^{3/2}\gg t^{3/2}$.
It remains to sum over $k$. After the change of index $\hat k=t-k$, the contribution of the range is
$$
\sum_{k=t-t^{\alpha+\e/100}+1}^{t}\Big(\E[Z_\delta^2]\Big)_k
\ll e^r\cdot t^{3\e/2}\cdot \sum_{\hat k=1}^{t^{\alpha+\e/100}}  \frac{  t^{\frac{\hat k}{t}(\alpha+1/2+\e)}}{{\hat k}^3}\ll  e^r\cdot t^{3\e/2}\cdot \sum_{\hat k=1}^{\infty}  \frac{1}{{\hat k}^3}\ll t^{7\e/4}.
$$
This proves that this contribution is $\oo(\E[Z_\delta])^2)$ by  Equation \eqref{eqn: Z lower} and concludes the proof of the lemma.
\end{proof}

\subsection{Proof of Proposition \ref{prop: LBgeq0}}
\label{sect: LBgeq0}
We now explain which modifications in the proof of Proposition \ref{prop: LB} are necessary to prove Proposition \ref{prop: LBgeq0}. Note that the case $\theta$ greater than zero is actually simpler as we do not need the ballot-type estimates used in Case $(II)$ in the proof of  Lemma  \ref{lem: second moment}.

\begin{proof}[Proof of Proposition \ref{prop: LBgeq0}]
As seen in the proof of Proposition \ref{prop: LB} it suffices to proof a lower bound on the first moment and an upper bound on the second moment of $Z_\delta$.
Adapting the barrier in \eqref{barrier} to the case $\theta>0$ we choose
\begin{equation}\label{barriergeq0}
b(k)= \frac{k}{t} +\frac{\theta t }{10}\left(1-\frac{k}{t}\right), \ k\leq t.
\end{equation}
Moreover, in the definition of $\mu$ we set $\alpha=0$. 

We start by showing a lower bound on $\E[Z_\delta]$. The first bound  in  Lemma \ref{lem: first moment} (and its proof) still hold, with $\alpha=0$, without any modification. Noting that the barrier in \eqref{barriergeq0} is linear and of order $t$ at the beginning we have
$$
\wPP(J(0))\gg t^{-1/2}.
$$
The bound in \eqref{eqn: Z lower} follows.
Next, we turn to the upper bound of $\E[Z_\delta^2]$. 
Equations \eqref{eqn: dominant} and \eqref{eqn: not so dominant} hold as is.

It remains to bound the terms with $e^{-k-1}\leq |h|\leq e^{-k}$ for $k\geq 0$. 
We proceed in a more straightforward way than the proof of Lemma  \ref{lem: second moment}. We split the walk $X_T$ into $X_{K}$ and $X_{K,T}$. 
(There is no need for an extra spacing $K'$ to allow for a better error in the decoupling here.)
We have
\begin{equation}
\label{eqn: Z2 k splitgeq0}
\begin{aligned}
\Big(\E[Z_\delta^2]\Big)_k&\ll (\#\mathcal H)\cdot e^{t-k}\cdot \sum_{q_1,q_2\leq b(k)}
\PP(\overline{X}_{K}(0)\in (q_1-1,q_1], \overline{X}_{K}(h)\in (q_2-1,q_2])\\
&\hspace{3.5cm}\cdot \PP(\overline{X}_{K,T}(0)+q_1\in [0,\delta+1], \overline{X}_{K,T}(h)+q_2\in [0,\delta+1])\\
&\ll e^{t(1+\theta)+(t-k)}\cdot  \sum_{q_1,q_2\leq b(k)} \frac{1}{\sqrt{k}}e^{-\mu(q_1+q_2)}e^{-\mu^2k}\cdot e^{2\mu(q_1+q_2)}e^{-2\mu^2(t-k)} \\
&\ll e^{-\theta(t-k)}\cdot t^{(\tfrac{k}{t}+2\tfrac{t-k}{t})(1/2+\e)} \sum_{q_1,q_2\leq b(k)} \frac{1}{\sqrt{k}}e^{\mu(q_1+q_2)}.
\end{aligned}
\end{equation}
In the second line, we changed measure to $\wPP$,  and instead of simply dropping events under $\wPP$ we kept a factor of order $1/\sqrt{k}$ from the first probability.  In the third line, we again simply replaced the definition of $\mu^2$. 
By definition of $b(k)$, we have $b(k)\leq \theta\frac{(t-k)}{10}+1$, therefore for $k<t-C\log t$  (for some constant $C$ large enough),
\begin{equation}
\label{eqn: Z2 kgeq0}
\Big(\E[Z_\delta^2]\Big)_k\ll  e^{-\frac{9}{10}\theta(t-k)}\cdot e^{2\mu b(k)}\ll  e^{-\frac{\theta}{2}(t-k)},
\end{equation}
where we absorbed the $t^{1/2}$ in the exponential, and we bounded $\mu$ by $2$.
The sum over $k$ is thus of order one. 
For $k>t-C\log t$ we observe that $t^{(\tfrac{k}{t}+2\tfrac{t-k}{t})(1/2+\e)}/\sqrt{k}$ is of order $t^\e$. 
Therefore, we get in this range
$$
\sum_{k>t-C\log t}\Big(\E[Z_\delta^2]\Big)_k\ll t^{\e}\sum_{k>t-C\log t}e^{-\theta(t-k)}\cdot e^{2\mu b(k)}\ll t^\e.
$$
The upper bound on the second moment then follows as in the Lemma \ref{lem: second moment}. This  concludes the proof of Proposition \ref{prop: LBgeq0}. 
\end{proof}

\begin{appendix}

\section{Gaussian Ballot Theorem}
In the propositions below, it is useful to define for $0<\delta\leq 1$ the intervals $I_x=(x,x+\delta]$ for $x>0$, $I_x=(x-\delta,x]$ if $x<0$, and $I_x=[-\delta,\delta]$ if $x=0$.
In the first proposition, we allow the increments of the random walk to have different variances. 
In the second and third, all increments have variance $1/2$.

\begin{proposition}[Ballot theorem with linear barrier: Lower bound]\label{thm: ballot linear LB}
Let $(S_j,j\geq 1)$ be a random walk with Gaussian increments of mean $0$ and variance $\sigma_j^2$ with $c^{-1}<\sigma_j<c$ for some fixed $c\geq 1$. Define the linear barrier 
$$ 
b(j) = aj+b(0).
$$
Then, for any $j\geq 1$, $(b(j)-x)\cdot b(0)\leq j$, we have
\begin{equation}\label{ballot_inequality_lin}
\PP \left( \forall 1\leq \ell \leq j, \ S_\ell \leq  b(\ell), \ S_j \in I_x\right)
\gg \frac{b(0) ( b(j) -x)}{ j^{3/2} } \delta e^{- \frac{cx^2}{j} }.
\end{equation}
\end{proposition}
\begin{proof}
The probability is bounded below by the probability for a Brownian motion on the continuous-time interval, see for example Lemma 6.2 in \cite{Webb2011}.
\end{proof}

\begin{proposition}[Ballot theorem with linear barrier: Upper bound]\label{thm: ballot linear UB}
Let $(S_j,j\geq 1)$ be a random walk with Gaussian increments of mean $0$ and variance $1/2$. Define the linear barrier 
$$ 
b(j) = aj+b(0).
$$
Then, for any $j\geq 1$, $b(0)> 0$ and $x \leq  b(j)$, we have
\begin{equation}\label{ballot_inequality2}
\PP \left( \forall 1\leq \ell \leq j, \ S_\ell \leq b(\ell), \ S_j \in I_x\right)
\ll
\frac{(b(0)+1) (b(j) -x+1 )}{ j^{3/2} }.
\end{equation}
\end{proposition}
\begin{proof}
This is a standard ballot theorem with linear barrier, see Lemma 6.2 in \cite{Webb2011}.
\end{proof}

\begin{proposition}[Ballot theorem with logarithmic barrier]\label{thm_ballot}
Let $(S_j,j\leq t)$ be a random walk with Gaussian increments of mean $0$ and variance $1/2$. Define the logarithmic barrier 
$$ 
\psi_j =  \log \left(  \min (j,t-j) \right).
$$
Let $t/\log t\leq k\leq t$, $y=\oo(t/\log t)$, $-20k<x\leq \psi_k$.
For $r=y$, we have
\begin{equation}\label{ballot_inequality_log}
\PP \left( \forall r<j \leq k, \ S_j\leq y + \psi_j, \ S_k \in I_x, |S_r|\leq 3r \right)
\ll
\frac{(y+1) (y + \psi_k -x+1 )}{ k^{3/2} } e^{- \frac{x^2}{k} }.
\end{equation}
\end{proposition}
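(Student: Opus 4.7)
The plan is to reduce Proposition~\ref{thm_ballot} to the linear-barrier ballot theorem (Proposition~\ref{thm: ballot linear}) via a Markov decomposition that isolates the behavior of the logarithmic barrier near the endpoints of the interval from its behavior in the bulk. By the symmetry $\psi_\ell = \psi_{n-\ell}$, one may assume without loss of generality that $j \leq n/2$, so that $\psi_\ell = 2\log\ell$ on $[1, j]$; the case $j > n/2$ reduces to this via a time-reversal splitting at $\lfloor n/2 \rfloor$. The key observation is that $\psi_\ell$ vanishes at the starting boundary ($\psi_1 = 0$) and reaches its maximum $\psi_j$ only at the ending boundary, growing logarithmically in between, which is much slower than the walk's typical fluctuations of order $\sqrt\ell$. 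The target factor $(y+1)(y + \psi_j - x + 1)$ then corresponds to the entrance and exit barrier heights in the ballot estimate: $y$ at $\ell = 1$ and $y + \psi_j$ at $\ell = j$.

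Concretely, I would apply the Markov property at $\ell = 1$, writing
\[
\PP(\mathrm{event}) = \sum_{z_1 \leq y} \PP(S_1 \in (z_1-1, z_1]) \cdot \PP\!\left(\forall\, 1 < \ell \leq j,\ S_\ell \leq y + \psi_\ell,\ S_j \in (x-1, x] \mid S_1 = z_1\right),
\]
and then apply Proposition~\ref{thm: ballot linear} to the shifted random walk $\tilde S_{\ell-1} = S_\ell - z_1$ with a constant majorant of $y + \psi_\ell - z_1$ on $[1, j]$. The Gaussian integration over $z_1 \leq y$ produces the $(y+1)$ factor, while the ballot factor contributes $(y + \psi_j - x + 1)$ together with the Gaussian density at the endpoint. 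A naive application with a single constant majorant $y + \psi_j$ however yields an extra logarithmic factor of $\psi_j = 2\log j$; to remove it, I would iterate the Markov decomposition at dyadic times $\ell = 2^k$, where the log barrier is constant up to $\OO(1)$ on each dyadic block, and apply Proposition~\ref{thm: ballot linear} with the locally-constant barrier on each block.

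The main technical obstacle is making the dyadic telescoping precise, that is, showing that the product of the ballot factors from the successive dyadic blocks combines to $\OO(y+1)$ rather than $\OO(y + \log j + 1)$. This relies on the fact that after integrating out the intermediate values of the walk at times $2^k$ against Gaussian densities, the successive ballot contributions interfere constructively to yield the desired $(y+1)$ factor at the starting boundary; an analogous dyadic decomposition near $\ell = j$ then delivers the $(y + \psi_j - x + 1)$ factor at the ending boundary. Standard Gaussian convolution estimates suffice in principle, but the bookkeeping is nontrivial. An alternative, perhaps cleaner, approach would first condition on $S_j \in (x-1, x]$ to convert the problem into a Gaussian bridge below a curved barrier, and then apply the classical reflection principle for the bridge with the log perturbation treated as a small correction using $\psi_\ell = \oo(\sqrt{\ell(j-\ell)/j})$ uniformly on the interior.
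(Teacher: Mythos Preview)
The paper does not prove this proposition in place; its entire proof is a reference to Proposition~5 of \cite{ArgBouRad20}. There is therefore no in-paper argument to compare against.

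Your sketch is along the right lines, and both strategies you outline---dyadic Markov reduction to Proposition~\ref{thm: ballot linear}, or conditioning on $S_j$ and treating the logarithmic barrier as a perturbation of the bridge---are standard and do succeed when executed. You have correctly located the crux: a single flat majorant $y+\psi_j$ inflates the starting-boundary factor from $(y+1)$ to $(y+\psi_j+1)$, and the work lies in showing the extra $\psi_j=2\log j$ is not actually present. In the bridge formulation, the point is that $\psi_\ell=\oo\big(\sqrt{\ell(j-\ell)/j}\big)$ away from boundary layers of polylogarithmic width near $\ell=1$ and $\ell=j$, and it is precisely the treatment of those two layers that produces the factors $(y+1)$ and $(y+\psi_j-x+1)$. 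As you yourself note, carrying this out is where the proof earns its keep; what you have written is a correct plan rather than a proof, and since the paper itself defers to an external reference, your proposal already goes further than the paper does.
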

\begin{proof}
See Propositions 4 and 5 in \cite{ArgBouRad20}. Proposition 4 there was stated for $k\geq t/2$ but holds {\it verbatim} for $k\geq t/\log t$ by applying Proposition 5. 
The condition $r=y=\oo(t/\log t)$ ensures that $r$ is small compared to $k$ so that $(k-r)^{3/2}\sim k^{3/2}$. The lower bound on $x$ is to make sure the distance to the barrier at time $k$ is $\ll k$. 
Finally, the bound $|S_r|\leq 3r$ is such that the initial distance at time $r$ is no more than the order of $y$, so that the factor $(y+1)$ (measuring the distance between the barrier and the starting point at the initial time $r$) on the right side still holds. 
\end{proof}

\end{appendix}

\bibliographystyle{imsart-nameyear}
\bibliography{zeta_interpol_bib}

\end{document}